\newtheorem{nnassumption}{\bf Assumption}
\newtheorem{nntheorem}{\bf Theorem}
\newenvironment{theorem}{\begin{nntheorem}\it}{\end{nntheorem}}
\newtheorem{nncorollary}{\bf Corollary}
\newtheorem{nndefinition}{\bf Definition}
\newtheorem{nnproposition}{\bf Proposition}
\newenvironment{proposition}{\begin{nnproposition}\it}{\end{nnproposition}}
\newtheorem{nnproblem}{\bf Problem}
\newtheorem{nnlemma}{\bf Lemma}
\newenvironment{lemma}{\begin{nnlemma}\it}{\end{nnlemma}}
\newtheorem{nnremark}{\bf Remark}
\newenvironment{remark}{\begin{nnremark} \rm }{\hfill \hspace*{1pt}\hfill $\circ$\end{nnremark}}
\newtheorem{nnexample}{\bf Example}
\renewcommand{\leq}{\leqslant}
\renewcommand{\geq}{\geqslant}
\begin{document}
\title{Controllability of wave--heat and heat--wave cascades}
\author{Hugo Lhachemi, Christophe Prieur, and Emmanuel Tr{\'e}lat
\thanks{Hugo Lhachemi is with Universit{\'e} Paris-Saclay, CNRS, CentraleSup{\'e}lec, Laboratoire des signaux et syst\`emes, 91190, Gif-sur-Yvette, France (email: hugo.lhachemi@centralesupelec.fr).}
\thanks{Christophe Prieur is with Universit\'e Grenoble Alpes, CNRS, Gipsa-lab, 38000 Grenoble, France (e-mail: christophe.prieur@gipsa-lab.fr).}
\thanks{Emmanuel Trélat is with Sorbonne Universit\'e, Universit\'e Paris Cit\'e, CNRS, Inria, Laboratoire Jacques-Louis Lions, LJLL, F-75005 Paris, France (e-mail: emmanuel.trelat@sorbonne-universite.fr).}}

\date{}

\maketitle

\begin{abstract}
We study boundary controllability of one-dimensional coupled hyperbolic--parabolic cascades, focusing on the fine structure of reachable sets.
The main model is a wave--heat cascade in which a boundary control acts on the wave equation and drives the heat equation through an internal coupling.
We provide a sharp minimal time for the hyperbolic part ($T>2L$) and a complete spectral characterization of exact controllability in weighted Hilbert spaces, whose definition depends explicitly on the coupling profile through a sequence of modal coefficients.
In particular, internal couplings may generate nonstandard highly irregular controllability spaces and yield a generic (full measure) but non-robust controllability property.
The analysis relies on Riesz basis decompositions and on an Ingham--M\"untz inequality.
We also prove that the exact controllability space is \emph{not} invariant along Hilbert Uniqueness Method trajectories: even if both endpoints belong to the controllability space, the associated minimal-energy trajectory may leave it at intermediate times.
Finally, we compare with the reversed (heat--wave) cascade and discuss how reversing the direction of the coupling transfers the loss of regularity between the parabolic and hyperbolic components.
\end{abstract}

\medskip
\noindent{\bf Keywords:}
Wave--heat cascade, heat--wave cascade, boundary control, exact controllability, HUM, Riesz basis, Ingham--M\"untz inequality.

~\newline\textbf{The present work is the result of the split of \cite{lhachemi2025controllability} into the study of the controllability properties of the cascade reported in this paper and the explicit feedback stabilization problem studied in the companion paper \cite{lhachemi2025tac}.}

\section{Introduction}\label{sec:intro}

Coupled PDEs involving different propagation regimes arise in many applications (thermoelasticity, fluid--structure interactions, or more generally multi-physics models).
From a control-theoretic viewpoint, hyperbolic--parabolic couplings are particularly delicate because they mix finite-speed propagation and parabolic smoothing.
The present paper is devoted to a detailed \emph{spectral} analysis of boundary controllability for two one-dimensional cascade configurations coupling a wave equation and a reaction--diffusion equation.

\smallskip

\paragraph{Wave--heat cascade}
Our first and main model is the wave--heat cascade
\begin{subequations}\label{eq:WH}
    \begin{align}
        &\partial_t y(t,x) = \partial_{xx} y(t,x) + c\, y(t,x) + \beta(x)\, z(t,x) , \label{eq:WH_y} \\
        &\partial_{tt} z(t,x) = \partial_{xx} z(t,x) , \label{eq:WH_z} \\
        & y(t,0) = y(t,L) = 0 , \qquad z(t,0) = 0 , \ \ \partial_x z(t,L) = u(t) , \label{eq:WH_bc}
    \end{align}
\end{subequations}
where $(t,x)\in(0,T)\times (0,L)$ with $T>0$ and $L>0$, $c\in\mathbb{R}$ and $\beta\in L^\infty(0,L)$ is a real-valued function.
The boundary control $u\in L^2(0,T)$ acts on the Neumann trace of the wave component.
The wave dynamics influences the heat dynamics through the internal coupling term $\beta z$ in \eqref{eq:WH_y}.
This model is the controllability counterpart of the stabilization problem studied in the companion paper \cite{lhachemi2025tac}, where an explicit feedback is designed and analyzed.

\smallskip

\paragraph{Heat--wave cascade}
We also consider the reversed configuration
\begin{subequations}\label{eq:HW}
\begin{align}
&\partial_t y(t,x) = \partial_{xx}y(t,x) + c\,y(t,x) ,  \label{eq:HW_y}\\
&\partial_{tt} z(t,x) = \partial_{xx}z(t,x) + \beta(x)\,y(t,x) ,  \label{eq:HW_z}\\
&y(t,0) = 0,\ \ y(t,L) = u(t), \qquad z(t,0) = 0,\ \ \partial_x z(t,L) = 0 ,  \label{eq:HW_bc}
\end{align}
\end{subequations}
in which the boundary control acts on the heat component (Dirichlet trace) and drives the wave component only through the coupling term $\beta y$ in \eqref{eq:HW_z}.

\smallskip

\paragraph{Related literature}
The controllability and stabilization of coupled hyperbolic--parabolic systems have been studied in various contexts; we only mention a few references closely connected to the present approach.
The works \cite{zhang2003polynomial,zhang2004polynomial} on boundary-coupled heat--wave systems  pioneered the use of combined Ingham--M\"untz inequalities to capture the interaction between a hyperbolic spectrum and a parabolic spectrum.
The general functional-analytic framework of well-posed linear systems and admissible control/observation operators is discussed in, e.g., \cite{trelat_SB,tucsnakweiss}, and classical references on controllability for PDEs include \cite{lions,russell1978controllability}.

\smallskip

\paragraph{Contributions and novelty}
Our results provide a \emph{complete} characterization of exact controllability for \eqref{eq:WH} and a parallel analysis for \eqref{eq:HW}.
The main outputs for \eqref{eq:WH} can be summarized as follows:

\begin{itemize}
\item We identify the sharp minimal control time imposed by the wave component: $T>2L$ (the critical case $T=2L$ remains open for the coupled problem).
\item We show that exact controllability hinges on a sequence of coupling coefficients $(\gamma_n)_{n\geq1}$ obtained from the interaction between the coupling profile $\beta$ and the wave/heat eigenmodes.
Under a coupling condition $\gamma_n\neq0$ for all $n$, we derive an observability inequality for the adjoint dynamics and deduce exact controllability in a weighted space $V$ (and exact \emph{null} controllability in a larger space $V_0$).
\item The controllability space $V$ is defined in a spectral way by coupling-dependent weights that may be highly oscillating and thus $V$ does not coincide with any standard Sobolev space in general. Controllability is generic (full measure) in natural parameter families but may fail to be robust.
\item We prove that $V$ is not invariant along HUM trajectories: even if the initial and final states are in $V$, the minimal-energy trajectory may leave $V$ for intermediate times.
\end{itemize}
For the heat--wave cascade \eqref{eq:HW}, we obtain an analogous characterization, and we emphasize the qualitative difference between the two directions of coupling.

\smallskip

\paragraph{Organization}
Section~\ref{sec:wave_heat} analyzes the wave--heat cascade \eqref{eq:WH}: Section~\ref{sec:spectral_WHT} recalls the spectral setting, and Section~\ref{sec:controllability_WHT} establishes the observability/controllability results and discusses consequences (including the HUM non-invariance phenomenon).
Section~\ref{sec:heat_wave} treats the heat--wave cascade \eqref{eq:HW} and compares it with the wave--heat configuration.
Section~\ref{sec_conclusion} is a conclusion with open problems and perspectives.

Technical points (transposition definition of the control operator, admissibility, and the Ingham--M\"untz inequality) are gathered in the appendices, together with the detailed proof of the HUM non-invariance statement.

\section{Controllability of a wave-heat cascade}\label{sec:wave_heat}
We consider the system \eqref{eq:WH} for $t>0$ and $x\in(0,L)$, where $u\in L^2(0,T)$ is the control input.
Although the model is real‑valued, we work in the complexification for spectral arguments.
For an initial state $(y_0,z_0,z_1)$, we denote by 
$$
\mathcal{X}(t)=(y(t,\cdot),z(t,\cdot),\partial_t z(t,\cdot))^\top
$$
the corresponding state at time $t$.
Throughout the paper, we work in the \emph{complexified} energy space 
$$
\mathcal{H} = L^2(0,L;\mathbb{C})\times H^1_{(0)}(0,L;\mathbb{C})\times L^2(0,L;\mathbb{C}), 
$$
which will be used as a pivot Hilbert space, endowed with the inner product
$$
\langle (f_1,g_1,h_1),(f_2,g_2,h_2)\rangle_{\mathcal{H}}
= \int_0^L \big(f_1 \overline{f_2} + g_1' \overline{g_2'} + h_1 \overline{h_2}\big)\,\mathrm{d}x ,
$$
with 
$$
H^1_{(0)}(0,L;\mathbb{C}) = \{g\in H^1(0,L;\mathbb{C})\ \mid\ g(0)=0\} .
$$
When the context is clear, we simply write $\langle\cdot,\cdot\rangle$ for $\langle\cdot,\cdot\rangle_{\mathcal{H}}$ and $\|\cdot\|_{\mathcal{H}}$ for the associated norm.
In what follows, for brevity, all function spaces are complex unless explicitly stated otherwise.

Since $\mathcal{X}=(y,z,\partial_t z)^\top$, the control system \eqref{eq:WH} can be written as
\begin{equation}\label{eq:abstract_WHT}
\dot{\mathcal{X}}(t)=\mathcal{A} \mathcal{X}(t)+\mathcal{B} u(t),
\end{equation}
where $\mathcal{A}:D(\mathcal{A})\subset\mathcal{H}\to\mathcal{H}$ is defined by
$$
\mathcal{A} = \begin{pmatrix} \partial_{xx} + c\,\mathrm{id} & \beta\,\mathrm{id} & 0 \\ 0 & 0 & \mathrm{id} \\ 0 & \partial_{xx} & 0 \end{pmatrix}
$$
i.e., $\mathcal{A}(f,g,h)=\big(f''+cf+\beta g,\, h,\, g''\big)$, with domain
\begin{multline*}
D(\mathcal{A}) = \{  (f,g,h)\in H^2(0,L) \times H^2(0,L) \times H^1(0,L)\ \mid\ \\
f(0)=f(L)=g(0)=g'(L)=h(0)=0 \} .
\end{multline*}
The (unbounded) control operator $\mathcal{B}$ associated with the Neumann boundary control on the wave equation can be defined by transposition; see Appendix~\ref{sec_app_A}.
In particular,
\begin{equation}\label{eq:B0star}
\mathcal{B}^*(\psi_1,\psi_2,\psi_3)^\top=\psi_3(L)\qquad \forall(\psi_1,\psi_2,\psi_3)^\top\in D(\mathcal{A}^*).
\end{equation}
Well-posedness (admissibility of $\mathcal{B}$) is shown in Appendix~\ref{sec_app_B}.

\begin{remark}[Real-valued trajectories]\label{rem:real_trajectories}
Let
$$
\mathcal{H}_{\mathbb{R}} = L^2(0,L;\mathbb{R})\times H^1_{(0)}(0,L;\mathbb{R})\times L^2(0,L;\mathbb{R}),
$$
viewed as a closed real subspace of $\mathcal{H}$.
Since $c$ and $\beta$ are real, the operators $\mathcal{A}$ and $\mathcal{B}$ commute with complex conjugation.
Hence, for any real control $u\in L^2(0,T;\mathbb{R})$ and any real initial condition $\mathcal{X}(0)\in\mathcal{H}_{\mathbb{R}}$, the corresponding mild solution satisfies $\mathcal{X}(t)\in\mathcal{H}_{\mathbb{R}}$ for all $t\in[0,T]$.

Moreover, controllability statements proved below in the complex setting restrict to the real one.
Indeed, if $\mathcal{X}_0,\mathcal{X}_1\in \mathcal{H}_{\mathbb{R}}$ and $u\in L^2(0,T;\mathbb{C})$ steers $\mathcal{X}_0$ to $\mathcal{X}_1$, then $\mathrm{Re}(u)$ steers $\mathcal{X}_0$ to $\mathcal{X}_1$ as well (real and imaginary parts satisfy decoupled dynamics).
\end{remark}

\subsection{Spectral properties of $\mathcal{A}$ and $\mathcal{A}^*$}\label{sec:spectral_WHT}
The spectral material of this section is shared with \cite{lhachemi2025tac}; we reproduce the statements needed for controllability and keep the proofs to the strict minimum.

\begin{lemma}\label{lem: eigenstructures of A0}
The eigenvalues of $\mathcal{A}$ split into a parabolic spectrum
\begin{equation*}
\lambda_{1,n}=c-\frac{n^2\pi^2}{L^2},\qquad n\in\mathbb{N}^*,
\end{equation*}
and a hyperbolic spectrum
\begin{equation*}
\lambda_{2,m}= i\,\frac{(2m+1)\pi}{2L},\qquad m\in\mathbb{Z}.
\end{equation*}
Associated eigenvectors are given by:
\begin{enumerate}
\item\label{item_parabolic} for each $n\in\mathbb{N}^*$, $\phi_{1,n}=\big(\phi_{1,n}^1,0,0\big)$ with
$\phi_{1,n}^1(x)=\sqrt{\frac{2}{L}}\sin\!\big(\frac{n\pi}{L}x\big)$ for all $x\in(0,L)$;
\item\label{item_hyperbolic} for each $m\in\mathbb{Z}$
$\phi_{2,m}=\big(\phi_{2,m}^1,\phi_{2,m}^2,\phi_{2,m}^3\big)$ with
$$
\phi_{2,m}^2(x)=\tfrac{2\sqrt{L}}{|2m+1|\pi}\sinh(\lambda_{2,m}x),\qquad \phi_{2,m}^3(x)=\lambda_{2,m}\phi_{2,m}^2(x),
$$
for all $x\in(0,L)$,
where $\phi_{2,m}^1\in H^2(0,L)$ is the unique solution of the differential equation
$(\lambda_{2,m}-c)\,\phi_{2,m}^1(x)-(\phi_{2,m}^1)''(x)=\beta(x)\,\phi_{2,m}^2(x)$ in $(0,L)$ with
$\phi_{2,m}^1(0)=\phi_{2,m}^1(L)=0$.
\end{enumerate}
\end{lemma}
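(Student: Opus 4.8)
The plan is to solve the eigenvalue equation $\mathcal{A}(f,g,h)^\top=\lambda(f,g,h)^\top$ directly, exploiting the block-triangular structure of $\mathcal{A}$ (which is not self-adjoint, so standard spectral theory does not apply): the last two rows involve only $(g,h)$, so I first determine the admissible $\lambda$ together with the pair $(g,h)$, and only afterwards reconstruct the first component $f$. Componentwise the equation reads
\begin{equation*}
f'' + cf + \beta g = \lambda f, \qquad h = \lambda g, \qquad g'' = \lambda h,
\end{equation*}
together with $f(0)=f(L)=0$, $g(0)=g'(L)=0$, $h(0)=0$. Substituting the middle relation into the last one yields the decoupled scalar problem $g''=\lambda^2 g$ on $(0,L)$ with $g(0)=0$ and $g'(L)=0$, after which $h=\lambda g$ is fixed.

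The next step is a dichotomy on $g$. If $g\equiv 0$, then $h\equiv 0$ and the first equation reduces to the Dirichlet eigenvalue problem $-f''=(c-\lambda)f$, $f(0)=f(L)=0$, whose spectrum is $c-\lambda=n^2\pi^2/L^2$ with eigenfunctions $\sin(n\pi x/L)$; this produces the parabolic family $\lambda_{1,n}$ and the eigenvectors $\phi_{1,n}$ after $L^2$-normalization. If $g\not\equiv 0$, then necessarily $\lambda\neq 0$ (for $\lambda=0$ the boundary conditions force $g$ affine and then $g\equiv0$), and the general solution $g(x)=A\cosh(\lambda x)+B\sinh(\lambda x)$ must satisfy $g(0)=0$, i.e. $A=0$, and $g'(L)=B\lambda\cosh(\lambda L)=0$; nontriviality forces the characteristic equation $\cosh(\lambda L)=0$, whose roots are exactly $\lambda L\in i\pi(\mathbb{Z}+\tfrac12)$, giving the hyperbolic family $\lambda_{2,m}=i(2m+1)\pi/(2L)$ together with $g=\phi_{2,m}^2$ and $h=\lambda_{2,m}\phi_{2,m}^2=\phi_{2,m}^3$. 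These two cases are exhaustive, so no other eigenvalues exist.

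It remains to recover the first component in the hyperbolic case and to fix the normalization. Given $g=\phi_{2,m}^2$, the first equation becomes the inhomogeneous boundary value problem $(\lambda_{2,m}-c)f-f''=\beta\,\phi_{2,m}^2$, $f(0)=f(L)=0$. The key point, and the step requiring care, is unique solvability: the homogeneous operator $-\partial_{xx}+(\lambda_{2,m}-c)$ with Dirichlet conditions is invertible precisely because $\lambda_{2,m}-c$ is not a Dirichlet eigenvalue, which holds automatically here since $\lambda_{2,m}-c$ has nonzero imaginary part $(2m+1)\pi/(2L)\neq 0$ whereas all Dirichlet eigenvalues $-n^2\pi^2/L^2$ are real. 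By the Fredholm alternative there is thus a unique $\phi_{2,m}^1\in H^2(0,L)$ with $\phi_{2,m}^1(0)=\phi_{2,m}^1(L)=0$, which is the announced $f$.

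Finally, the normalization constant follows from a direct computation: writing $\lambda_{2,m}=i\omega_m$ with $\omega_m=(2m+1)\pi/(2L)$, one has $\sinh(\lambda_{2,m}x)=i\sin(\omega_m x)$, and using $\int_0^L\cos^2(\omega_m x)\,\mathrm{d}x=\int_0^L\sin^2(\omega_m x)\,\mathrm{d}x=L/2$ (since $\sin(2\omega_m L)=\sin((2m+1)\pi)=0$) yields $\|(\phi_{2,m}^2)'\|_{L^2}^2+\|\phi_{2,m}^3\|_{L^2}^2=B^2\omega_m^2 L=1$ with $B=2\sqrt{L}/(|2m+1|\pi)$, the chosen scaling of the wave-energy part. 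The only genuinely delicate step is the unique solvability of the inhomogeneous Sturm--Liouville problem for $\phi_{2,m}^1$; everything else is elementary ODE analysis.
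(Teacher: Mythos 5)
Your proof is correct and takes essentially the same route as the paper's: use the block-triangular structure to solve the wave block $g''=\lambda^2 g$, $g(0)=g'(L)=0$ first (yielding the hyperbolic spectrum $\cosh(\lambda L)=0$), obtain the parabolic family from the decoupled Dirichlet heat operator, and then recover $\phi_{2,m}^1$ by solving the inhomogeneous Dirichlet problem, whose unique solvability follows from the non-resonance of $\lambda_{2,m}-c$ with the real Dirichlet spectrum. The details you add (the dichotomy on $g$, the Fredholm argument, and the normalization check) are precisely what the paper's sketch defers to its companion reference.
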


\begin{proof}[Sketch of proof]
Item~\ref{item_parabolic} follows from the decoupled heat operator with homogeneous Dirichlet conditions.
For Item~\ref{item_hyperbolic}, solving $\lambda^2 g=g''$ with $g(0)=0$, $g'(L)=0$ yields $\lambda=\lambda_{2,m}$ and $g(x)=\mathrm{Cst}\,\sinh(\lambda_{2,m}x)$.
Then $\phi_{2,m}^1$ is obtained by solving $(\lambda_{2,m}-\partial_{xx}-c)\phi_{2,m}^1=\beta \phi_{2,m}^2$ with Dirichlet conditions.
See \cite{lhachemi2025tac} for full details.
\end{proof}

\begin{remark}\label{rem_spectrum}
The cascade nature of \eqref{eq:WH} is reflected by the nonzero first component $\phi_{2,m}^1$ (whose explicit expression can be found in \cite{lhachemi2025tac}).
\end{remark}

\begin{lemma}\label{lem:A0 is Riesz spectral}
The family $\Phi=\{\phi_{1,n}\}_{n\in\mathbb{N}^*}\cup\{\phi_{2,m}\}_{m\in\mathbb{Z}}$ is a Riesz basis of $\mathcal{H}$.
In particular, $\mathcal{A}$ is a Riesz-spectral operator: there exists a biorthogonal family $\Psi=\{\psi_{1,n}\}_{n\in\mathbb{N}^*}\cup\{\psi_{2,m}\}_{m\in\mathbb{Z}}\subset \mathcal{H}$ such that
$\langle \phi_{i,k},\psi_{j,\ell}\rangle=\delta_{ij}\delta_{k\ell}$
and the semigroup generated by $\mathcal{A}$ admits the spectral expansion
\begin{equation}\label{eq: semigroup expansion}
T_0(t)\mathcal{X}=\sum_{n\in\mathbb{N}^*}e^{\lambda_{1,n}t}\langle\mathcal{X},\psi_{1,n}\rangle\phi_{1,n}
+\sum_{m\in\mathbb{Z}}e^{\lambda_{2,m}t}\langle\mathcal{X},\psi_{2,m}\rangle\phi_{2,m}
\qquad \forall t\geq0.
\end{equation}
\end{lemma}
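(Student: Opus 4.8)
The plan is to realize $\Phi$ as the image of an orthonormal basis of $\mathcal{H}$ under a bounded, boundedly invertible operator of the form $I+K$ with $K$ compact, and then to invoke the stability of Riesz bases under such perturbations. I start from the \emph{decoupled} family $e_{1,n}=\phi_{1,n}=(\phi_{1,n}^1,0,0)$ and $e_{2,m}=(0,\phi_{2,m}^2,\phi_{2,m}^3)$, obtained by dropping the coupling term $\beta$. The block structure of $\langle\cdot,\cdot\rangle_{\mathcal{H}}$ makes the heat vectors orthogonal to the wave vectors; since $\{\phi_{1,n}^1\}_n$ is an orthonormal basis of $L^2(0,L)$ and the wave generator is skew-adjoint with compact resolvent on $H^1_{(0)}(0,L)\times L^2(0,L)$ (so that $\{(\phi_{2,m}^2,\phi_{2,m}^3)\}_m$, normalized as in Lemma~\ref{lem: eigenstructures of A0}, is an orthonormal basis of that space), the family $\{e_{1,n}\}\cup\{e_{2,m}\}$ is an orthonormal basis of $\mathcal{H}$.

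The key quantitative step is to control the only discrepancy between $\Phi$ and the decoupled basis, namely the first component $\phi_{2,m}^1$ of the hyperbolic eigenvectors: I claim $\sum_{m\in\mathbb{Z}}\|\phi_{2,m}^1\|_{L^2}^2<\infty$. Indeed, $\phi_{2,m}^1$ solves $(-\partial_{xx}-c+\lambda_{2,m})\phi_{2,m}^1=\beta\,\phi_{2,m}^2$ with Dirichlet conditions. The operator $-\partial_{xx}-c$ is self-adjoint with real spectrum, while $\lambda_{2,m}$ is purely imaginary, so the normal resolvent bound gives $\|(-\partial_{xx}-c+\lambda_{2,m})^{-1}\|\le 1/|\lambda_{2,m}|$. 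Combined with $\|\beta\,\phi_{2,m}^2\|_{L^2}\le\|\beta\|_\infty\|\phi_{2,m}^2\|_{L^2}=O(1/|m|)$ and $|\lambda_{2,m}|=O(|m|)$, this yields $\|\phi_{2,m}^1\|_{L^2}=O(1/|m|^2)$, hence square-summability. This estimate --- essentially the statement that the elliptic resolvent gains a full power of $|m|$ along the purely imaginary hyperbolic spectrum --- is the technical heart of the argument, and the main obstacle; everything else is soft once the cascade/triangular structure is exploited.

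With this in hand, I define $K$ on the orthonormal basis by $Ke_{1,n}=0$ and $Ke_{2,m}=(\phi_{2,m}^1,0,0)$, so that $\phi_{i,k}=(I+K)e_{i,k}$. The square-summability above shows $K$ is Hilbert--Schmidt, hence compact, and $Q:=I+K$ is bounded. By the Fredholm alternative, $Q$ is invertible as soon as it is injective. Injectivity follows from the cascade structure: if $\sum_n c_{1,n}\phi_{1,n}+\sum_m c_{2,m}\phi_{2,m}=0$ with $(c_{i,k})\in\ell^2$, then projecting onto the last two components kills all $\phi_{1,n}$ and leaves $\sum_m c_{2,m}(\phi_{2,m}^2,\phi_{2,m}^3)=0$, whence $c_{2,m}=0$ for all $m$ by orthonormality of the wave family; the remaining identity $\sum_n c_{1,n}\phi_{1,n}^1=0$ then forces $c_{1,n}=0$ by orthonormality in $L^2$. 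Thus $Q$ is boundedly invertible and $\Phi=Q(\mathrm{ONB})$ is a Riesz basis.

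Finally, the biorthogonal family is obtained as $\psi_{i,k}=(Q^{-1})^*e_{i,k}$ (the image under $(Q^{-1})^*$ of the self-dual orthonormal basis), which satisfies $\langle\phi_{i,k},\psi_{j,\ell}\rangle=\delta_{ij}\delta_{k\ell}$. The semigroup expansion \eqref{eq: semigroup expansion} is then the standard modal representation for a Riesz-spectral generator: since the eigenvalues satisfy $\mathrm{Re}\,\lambda_{1,n}\le c$ and $\mathrm{Re}\,\lambda_{2,m}=0$, their real parts are bounded above, $\mathcal{A}$ generates a $C_0$-semigroup, and the expansion of $T_0(t)\mathcal{X}$ along $\Phi$ with weights $e^{\lambda_{i,k}t}\langle\mathcal{X},\psi_{i,k}\rangle$ converges and solves the abstract equation \eqref{eq:abstract_WHT} with $u=0$.
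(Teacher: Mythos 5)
Your proof is correct and takes essentially the same route as the paper: comparison of $\Phi$ with the decoupled ($\beta=0$) orthonormal basis of $\mathcal{H}$, reduction to the $\ell^2$-summability of the coupling corrections $\phi_{2,m}^1$, and a Bari-type conclusion --- your explicit $Q=I+K$ Fredholm argument is exactly the standard proof of Bari's theorem, which the paper invokes by citation. The only difference is one of self-containedness: you supply inline the key estimate $\|\phi_{2,m}^1\|_{L^2}=\mathrm{O}(|m|^{-2})$ (via the self-adjoint resolvent bound along the imaginary axis) and the $\omega$-independence argument exploiting the triangular structure, details which the paper defers to the companion paper.
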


\begin{proof}[Sketch of proof]
This follows from a perturbation argument comparing $\Phi$ with the decoupled Riesz basis corresponding to $\beta=0$, and from Bari's theorem \cite{gohberg1978introduction}.
We refer to \cite{lhachemi2025tac} for the detailed estimates (in particular, the $\ell^2$-summability of the coupling correction in $\phi_{2,m}^1$).
\end{proof}

\smallskip

Exact controllability is formulated via observability of the adjoint dynamics.
We therefore record the adjoint operator and its eigenvectors.

\begin{lemma}\label{lem: adjoint operator A0*}
The adjoint $\mathcal{A}^*:D(\mathcal{A}^*)\subset\mathcal{H}\to\mathcal{H}$ is given by
$$
\mathcal{A}^* = \begin{pmatrix} \partial_{xx} + c\,\mathrm{id} & 0 & 0 \\ P_\beta & 0 & - \mathrm{id} \\ 0 & -\partial_{xx} & 0 \end{pmatrix}
$$
i.e., $\mathcal{A}^*(f,g,h)=\big(f''+cf,\, P_\beta f-h,\, -g''\big)$, with domain
\begin{multline*}
D(\mathcal{A}^*) = \{ (f,g,h)\in H^2(0,L) \times H^2(0,L) \times H^1(0,L)\ \mid\ \\
f(0)=f(L)=g(0)=g'(L)=h(0)=0 \} .
\end{multline*}
where 
$P_\beta f(x) = \int_0^x \int_\tau^L \beta(s) f(s) \,\mathrm{d}s\,\mathrm{d}\tau$ for every $x\in[0,L]$;
equivalently, $w=P_\beta f$ is the unique solution of $w''=-\beta f$ on $(0,L)$ with $w(0)=0$ and $w'(L)=0$.

The eigenvalues of $\mathcal{A}^*$ are $\lambda_{1,n}$ and $\overline{\lambda_{2,m}}$.
Its eigenfunctions are given by the dual Riesz basis $\Psi=\{\psi_{1,n}\}_{n\in\mathbb{N}^*}\cup\{\psi_{2,m}\}_{m\in\mathbb{Z}}$ of $\Phi$, with $\psi_{2,m}=(0,\psi_{2,m}^2,\psi_{2,m}^3)$ and 
\begin{equation}\label{psi_2m}
\psi_{2,m}^2(x)=\tfrac{2\sqrt{L}}{|2m+1|\pi}\sinh(\overline{\lambda_{2,m}}x),\qquad
\psi_{2,m}^3(x)=-\overline{\lambda_{2,m}}\,\psi_{2,m}^2(x),
\end{equation}
and $\psi_{1,n}=(\psi_{1,n}^1,\psi_{1,n}^2,\psi_{1,n}^3)$ with $\psi_{1,n}^1(x)=\sqrt{\tfrac{2}{L}}\sin(\tfrac{n\pi}{L}x)$ and
\begin{subequations}\label{psi_1n}
\begin{align}
\psi_{1,n}^2(x)&=-\tfrac{\gamma_n}{\lambda_{1,n}^2\cosh(\lambda_{1,n}L)}\sqrt{\tfrac{2}{L}}\cosh(\lambda_{1,n}(x-L)) \label{psi_1n_2} \\
&\qquad -\tfrac{1}{\lambda_{1,n}^2}\sqrt{\tfrac{2}{L}}\int_x^L \beta(s)\sin\!\left(\tfrac{n\pi}{L}s\right)\sinh(\lambda_{1,n}(x-s))\,\mathrm{d}s \notag \\
&\qquad +\tfrac{1}{\lambda_{1,n}}\sqrt{\tfrac{2}{L}}\int_0^x\int_\tau^L \beta(s)\sin\!\left(\tfrac{n\pi}{L}s\right)\,\mathrm{d}s\,\mathrm{d}\tau, \notag \\
\psi_{1,n}^3(x)&=\tfrac{\gamma_n}{\lambda_{1,n}\cosh(\lambda_{1,n}L)}\sqrt{\tfrac{2}{L}}\cosh(\lambda_{1,n}(x-L)) \label{psi_1n_3} \\
&\qquad +\tfrac{1}{\lambda_{1,n}}\sqrt{\tfrac{2}{L}}\int_x^L \beta(s)\sin\!\left(\tfrac{n\pi}{L}s\right)\sinh(\lambda_{1,n}(x-s))\,\mathrm{d}s, \notag
\end{align}
\end{subequations}
if $\lambda_{1,n}\neq0$, and
$$
\psi^2_{1,n}(x) =  0 , \qquad 
\psi^3_{1,n}(x) = \sqrt{\tfrac{2}{L}} \int_0^x \int_\tau^L \beta(s) \sin\left(\tfrac{n\pi}{L}s\right) \,\mathrm{d}s\,\mathrm{d}\tau 
$$
if $\lambda_{1,n}=0$.
The coupling coefficients $\gamma_n$ are defined by
\begin{equation}\label{def_gamma_n}
\gamma_n=\left\{
\begin{array}{lcl}
\displaystyle\int_0^L \beta(s)\sin\!\left(\tfrac{n\pi}{L}s\right)\sinh(\lambda_{1,n}s)\,\mathrm{d}s & \text{if} & \lambda_{1,n}\neq0,\\[3mm]
\displaystyle \int_0^L \beta(s)\sin\!\left(\tfrac{n\pi}{L}s\right)\, s\,\mathrm{d}s & \text{if} & \lambda_{1,n}=0.
\end{array}\right.
\end{equation}
\end{lemma}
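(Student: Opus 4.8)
The plan is to proceed in three stages: first derive the formula and domain of $\mathcal{A}^*$ by integration by parts; then identify its spectrum and eigenfunctions from the Riesz-spectral structure of Lemma~\ref{lem:A0 is Riesz spectral}; and finally obtain the explicit expressions \eqref{psi_2m}--\eqref{def_gamma_n} by solving the adjoint eigenvalue problem mode by mode. For the adjoint computation, I would take $(f,g,h)\in D(\mathcal{A})$ and a test element $(\psi_1,\psi_2,\psi_3)$ and expand $\langle\mathcal{A}(f,g,h),(\psi_1,\psi_2,\psi_3)\rangle_{\mathcal{H}}$, keeping in mind that the middle slot carries the $H^1_{(0)}$ inner product $\int_0^L(\cdot)'\overline{(\cdot)'}\,\mathrm{d}x$. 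Integrating by parts in each term and using $f(0)=f(L)=g(0)=g'(L)=h(0)=0$, the boundary contributions collapse to $f'(L)\overline{\psi_1(L)}-f'(0)\overline{\psi_1(0)}$, $h(L)\overline{\psi_2'(L)}$ and $-g'(0)\overline{\psi_3(0)}$; requiring these to vanish for arbitrary $f',h,g'$ forces $\psi_1(0)=\psi_1(L)=\psi_2'(L)=\psi_3(0)=0$, which together with $\psi_2(0)=0$ (inherited from $H^1_{(0)}$) gives $D(\mathcal{A}^*)$. The only delicate term is the coupling $\int_0^L\beta g\,\overline{\psi_1}\,\mathrm{d}x$: setting $W(x)=\int_x^L\beta(s)\overline{\psi_1(s)}\,\mathrm{d}s$ and integrating by parts (using $g(0)=0$ and $W(L)=0$) rewrites it as $\int_0^L g'\,W\,\mathrm{d}x$, i.e. as the $H^1_{(0)}$ pairing $\langle g,P_\beta\psi_1\rangle$ since $(P_\beta\psi_1)'=\int_\cdot^L\beta\psi_1$; this is exactly why $P_\beta$ is a double antiderivative. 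Collecting terms yields $\mathcal{A}^*(\psi_1,\psi_2,\psi_3)=(\psi_1''+c\psi_1,\ P_\beta\psi_1-\psi_3,\ -\psi_2'')$.

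For the spectral identification I would invoke the standard Riesz-spectral fact behind Lemma~\ref{lem:A0 is Riesz spectral}: since $\{\phi_{i,k}\}$ is a Riesz basis of eigenvectors with biorthogonal family $\Psi$, the identity $\langle\phi_{j,\ell},\mathcal{A}^*\psi_{i,k}\rangle=\langle\mathcal{A}\phi_{j,\ell},\psi_{i,k}\rangle=\lambda_{j,\ell}\,\delta_{(j,\ell),(i,k)}=\langle\phi_{j,\ell},\overline{\lambda_{i,k}}\psi_{i,k}\rangle$ holds for every $(j,\ell)$ (the last step using conjugate-linearity in the second slot), and totality of $\{\phi_{j,\ell}\}$ gives $\mathcal{A}^*\psi_{i,k}=\overline{\lambda_{i,k}}\psi_{i,k}$; membership $\Psi\subset D(\mathcal{A}^*)$ is confirmed a posteriori by the explicit formulas. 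Since all eigenvalues are simple and $\lambda_{1,n}\in\mathbb{R}$, the spectrum of $\mathcal{A}^*$ is $\{\lambda_{1,n}\}\cup\{\overline{\lambda_{2,m}}\}$ and its eigenfunctions are precisely the dual-basis elements, which it remains only to compute.

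For the explicit forms I would solve $\mathcal{A}^*\psi=\nu\psi$ directly, exploiting the block structure just derived in which the first component decouples. The equation $\psi^{1\prime\prime}+c\psi^1=\nu\psi^1$ with Dirichlet data has no nontrivial solution when $\nu=\overline{\lambda_{2,m}}$ is purely imaginary, so $\psi^1=0$; the remaining equations then give $\psi^{2\prime\prime}=\overline{\lambda_{2,m}}^2\psi^2$ and $\psi^3=-\overline{\lambda_{2,m}}\psi^2$, and the conditions $\psi^2(0)=0$, $\psi^{2\prime}(L)=0$ (the latter automatic since $\cosh(\overline{\lambda_{2,m}}L)=0$) select $\psi^2\propto\sinh(\overline{\lambda_{2,m}}\,\cdot)$, which is \eqref{psi_2m} after fixing the constant by $\langle\phi_{2,m},\psi_{2,m}\rangle=1$. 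When $\nu=\lambda_{1,n}\neq0$ the first component is $\sqrt{2/L}\sin(n\pi\,\cdot/L)$, and $\psi^2$ solves the inhomogeneous problem $\psi^{2\prime\prime}-\lambda_{1,n}^2\psi^2=-\lambda_{1,n}P_\beta\psi^1$ with $\psi^2(0)=0$, $\psi^{2\prime}(L)=0$, while $\psi^3=P_\beta\psi^1-\lambda_{1,n}\psi^2$. The condition $\psi^2(0)=0$ removes the $\cosh(\lambda_{1,n}x)$ mode and the Neumann condition at $x=L$ fixes the coefficient of $\cosh(\lambda_{1,n}(x-L))$; this is precisely the step that makes $\gamma_n$ of \eqref{def_gamma_n} appear, divided by $\cosh(\lambda_{1,n}L)$. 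Expanding the variation-of-parameters particular solution with Green's kernel $\sinh(\lambda_{1,n}(x-s))/\lambda_{1,n}$ and integrating by parts once against $(P_\beta\psi^1)''=-\beta\psi^1$ converts the nonlocal forcing into the single-integral kernels of \eqref{psi_1n_2}--\eqref{psi_1n_3}.

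The main obstacle will be this last computation: carrying out the variation-of-parameters solve for $\psi_{1,n}^2$ with mixed Dirichlet--Neumann conditions and nonlocal right-hand side $P_\beta\psi^1$, and reorganizing the iterated integrals so that the closed forms \eqref{psi_1n} emerge and the Neumann trace at $L$ reproduces exactly the factor $\gamma_n/\cosh(\lambda_{1,n}L)$. A secondary technical point is the degenerate resonance $\lambda_{1,n}=0$ (when $c=n^2\pi^2/L^2$), where the homogeneous operator reduces to $\partial_{xx}$ and the formulas must be recovered as the removable limit $\lambda_{1,n}\to0$; this also accounts for the alternate definition of $\gamma_n$ in \eqref{def_gamma_n}. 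Finally, I would verify biorthogonality $\langle\phi_{i,k},\psi_{j,\ell}\rangle=\delta_{ij}\delta_{k\ell}$: the off-diagonal entries vanish by simplicity of the spectrum, while the diagonal ones fix the normalizing constants $\sqrt{2/L}$, $2\sqrt{L}/(|2m+1|\pi)$ and the $\gamma_n$-coefficient, thereby confirming that the eigenfunctions just computed are indeed the dual basis $\Psi$.
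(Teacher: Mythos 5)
Your proposal is correct and follows essentially the same route as the paper's own (sketched) proof, which likewise derives $\mathcal{A}^*$ by integration by parts in $\langle \mathcal{A}\cdot,\cdot\rangle_{\mathcal{H}}$ and then obtains the eigenvectors by solving $\mathcal{A}^*\psi=\lambda\psi$ componentwise using the boundary conditions in $D(\mathcal{A}^*)$. Your additional ingredients (identifying the coupling term through the $H^1_{(0)}$ pairing via $P_\beta$, the Riesz-duality identification of $\Psi$ as the eigenfamily, and the biorthogonal normalization) are consistent elaborations of the same argument rather than a different method.
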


\begin{proof}[Sketch of proof]
The expression of $\mathcal{A}^*$ follows from integration by parts in $\langle \mathcal{A}\cdot,\cdot\rangle_{\mathcal{H}}$.
Eigenvectors are obtained by solving $\mathcal{A}^*\psi=\lambda\psi$ componentwise and using the boundary conditions in $D(\mathcal{A}^*)$.
We omit the straightforward calculations and refer to \cite{lhachemi2025tac}.
\end{proof}

\begin{remark}\label{remgamman}
The coefficients $\gamma_n$ defined by \eqref{def_gamma_n} play a pivotal role in the controllability of \eqref{eq:WH}.
They encode the interaction between the internal coupling profile $\beta$ and the wave eigenmodes through the strongly growing factor $\sinh(\lambda_{1,n}s)$.
It can be noted that $\vert\gamma_n\vert\leq \frac{\mathrm{Cst}}{n^2}e^{n^2\pi^2/L}$.
\end{remark}

\subsection{Exact controllability and observability properties}\label{sec:controllability_WHT}
\subsubsection{Preliminaries}
We establish exact and null controllability in suitable Hilbert spaces by duality.
Let $T>0$.
Exact observability for the adjoint system
\begin{equation}\label{eq:adjoint_WHT}
\dot{\mathcal{X}}(t)=\mathcal{A}^*\mathcal{X}(t)
\end{equation}
with observation $\mathcal{B}^*\mathcal{X}(t)=\mathcal{X}^3(t,L)$ (see \eqref{eq:B0star}), reads:
there exists $C_T>0$ such that
\begin{equation}\label{exact_obs}
\int_0^T|\mathcal{X}^3(t,L)|^2\,\mathrm{d}t\geq C_T\|\mathcal{X}(0)\|_{V'}^2,
\end{equation}
for all initial data $\mathcal{X}(0)$ in a suitable Hilbert space $V'\subset \mathcal{H}$ (to be defined).
Similarly, finite-time observability reads
\begin{equation}\label{exact_obs1}
\int_0^T|\mathcal{X}^3(t,L)|^2\,\mathrm{d}t\geq C_T^0\|\mathcal{X}(T)\|_{\mathcal{H}}^2.
\end{equation}
By standard HUM duality (see \cite{lions,trelat_SB,tucsnakweiss}), these inequalities yield exact controllability in $V$ and exact null controllability in $V_0$ (defined below).

\smallskip

A first structural condition is the non-vanishing of the modal coefficients $\gamma_n$.

\begin{lemma}\label{lem_necessary}
A necessary condition for \eqref{eq:WH} to be approximately (and hence exactly) controllable in any Hilbert space continuously and densely embedded into $\mathcal{H}$ is that $\gamma_n\neq 0$ for every $n\in\mathbb{N}^*$, where $\gamma_n$ is defined by \eqref{def_gamma_n}.
\end{lemma}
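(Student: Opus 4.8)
The plan is to argue by duality, through the failure of observability for the adjoint dynamics \eqref{eq:adjoint_WHT}. Approximate controllability of \eqref{eq:WH} in a Hilbert space $W$ continuously and densely embedded in $\mathcal{H}$ amounts to the density in $W$ of the set of states reachable from the origin; I would show that if $\gamma_{n_0}=0$ for some $n_0$, then this reachable set is confined to a proper closed hyperplane, so density fails. The obstruction is the adjoint eigenvector $\psi_{1,n_0}$. Since $\psi_{1,n_0}\in D(\mathcal{A}^*)$ is an eigenvector of $\mathcal{A}^*$ with (real) eigenvalue $\lambda_{1,n_0}$, and since $\mathcal{B}$ is admissible (Appendix~\ref{sec_app_B}), the state $\mathcal{X}(T)=\int_0^T T_0(T-t)\mathcal{B}u(t)\,\mathrm{d}t$ reached from $\mathcal{X}(0)=0$ satisfies, for every $u\in L^2(0,T)$,
\[
\langle \mathcal{X}(T),\psi_{1,n_0}\rangle_{\mathcal{H}}=\int_0^T e^{\lambda_{1,n_0}(T-t)}\,u(t)\,\overline{\mathcal{B}^*\psi_{1,n_0}}\,\mathrm{d}t ,
\]
where I would use $T_0(T-t)^*\psi_{1,n_0}=e^{\lambda_{1,n_0}(T-t)}\psi_{1,n_0}$ together with the transposition identity $\langle \mathcal{B}v,\psi\rangle_{\mathcal{H}}=v\,\overline{\mathcal{B}^*\psi}$ valid for $\psi\in D(\mathcal{A}^*)$.

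The second step is to identify $\mathcal{B}^*\psi_{1,n}=\psi_{1,n}^3(L)$ (see \eqref{eq:B0star}) as a \emph{nonzero} multiple of $\gamma_n$. Evaluating \eqref{psi_1n_3} at $x=L$, the integral term $\int_x^L(\cdots)\,\mathrm{d}s$ vanishes while $\cosh(\lambda_{1,n}(x-L))$ becomes $1$, leaving
\[
\psi_{1,n}^3(L)=\frac{\gamma_n}{\lambda_{1,n}\cosh(\lambda_{1,n}L)}\sqrt{\tfrac{2}{L}}\qquad(\lambda_{1,n}\neq0),
\]
whose prefactor is nonzero because $\lambda_{1,n}$ is real and $\cosh(\lambda_{1,n}L)\geq1$. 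In the degenerate case $\lambda_{1,n}=0$, applying Fubini's theorem to the corresponding formula for $\psi_{1,n}^3$ gives $\psi_{1,n}^3(L)=\sqrt{\tfrac{2}{L}}\int_0^L\beta(s)\sin(\tfrac{n\pi}{L}s)\,s\,\mathrm{d}s=\sqrt{\tfrac{2}{L}}\,\gamma_n$, again a nonzero multiple of $\gamma_n$ by \eqref{def_gamma_n}. In both cases $\mathcal{B}^*\psi_{1,n}=0$ if and only if $\gamma_n=0$.

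Combining the two steps yields the conclusion: if $\gamma_{n_0}=0$, then $\mathcal{B}^*\psi_{1,n_0}=0$, so the displayed identity forces $\langle \mathcal{X}(T),\psi_{1,n_0}\rangle_{\mathcal{H}}=0$ for every reachable state and every $T>0$. Since $\psi_{1,n_0}\in\mathcal{H}\setminus\{0\}$ and the embedding $W\hookrightarrow\mathcal{H}$ is continuous, the map $\mathcal{X}\mapsto\langle\mathcal{X},\psi_{1,n_0}\rangle_{\mathcal{H}}$ is a bounded linear functional on $W$, and it is nonzero on $W$ because $W$ is dense in $\mathcal{H}$. As $\|\cdot\|_{\mathcal{H}}\leq C\|\cdot\|_W$, any target $\mathcal{X}_1\in W$ with $\langle\mathcal{X}_1,\psi_{1,n_0}\rangle_{\mathcal{H}}\neq0$ stays at a positive $W$-distance from the whole reachable set, so it cannot be approximated; hence approximate, and a fortiori exact, controllability in $W$ fails.

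The anticipated main obstacle is the rigorous justification of the first step in the unbounded-control setting: one must legitimate the duality pairing $\langle T_0(T-t)\mathcal{B}u(t),\psi_{1,n_0}\rangle_{\mathcal{H}}=\langle \mathcal{B}u(t),T_0(T-t)^*\psi_{1,n_0}\rangle$ and its reduction to $\mathcal{B}^*\psi_{1,n_0}$, which relies precisely on the admissibility of $\mathcal{B}$ and on the regularity $\psi_{1,n_0}\in D(\mathcal{A}^*)$ recorded earlier. Once this is in place, the remaining steps are a direct evaluation of the boundary trace $\psi_{1,n}^3(L)$ and an elementary density argument.
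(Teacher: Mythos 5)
Your proposal is correct and takes essentially the same route as the paper's proof: both reduce the lemma to the observation that $\gamma_{n_0}=0$ forces $\mathcal{B}^*\psi_{1,n_0}=\psi_{1,n_0}^3(L)=0$, so that $\psi_{1,n_0}$ is an unobservable eigenvector of $\mathcal{A}^*$ and, by duality, every reachable state is confined to the hyperplane $\{\mathcal{X}\mid\langle\mathcal{X},\psi_{1,n_0}\rangle_{\mathcal{H}}=0\}$, which is incompatible with approximate controllability in any Hilbert space continuously and densely embedded in $\mathcal{H}$. The only difference is one of detail: you make explicit the evaluation $\psi_{1,n}^3(L)=\mathrm{Cst}\cdot\gamma_n$ from \eqref{psi_1n_3} (including the case $\lambda_{1,n}=0$) and the admissibility/transposition justification of the pairing, steps the paper treats as immediate consequences of Lemma~\ref{lem: adjoint operator A0*} and the standard unobservable-eigenvector argument.
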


\begin{proof}
Let $n\in\mathbb{N}^*$.
If $\gamma_n=0$, then by Lemma~\ref{lem: adjoint operator A0*} we have $\mathcal{B}^*\psi_{1,n}=\psi_{1,n}^3(L)=0$.
Therefore $\psi_{1,n}$ is an unobservable eigenvector of $\mathcal{A}^*$, so that neither approximate observability in $\mathcal{H}$ nor approximate controllability (in any compatible Hilbert space) can hold.
\end{proof}

The next lemma provides the asymptotics of the observation coefficients.

\begin{lemma}\label{lem_necessary_psi}
For any $m\in\mathbb{Z}$, one has $|\psi_{2,m}^3(L)|=1/\sqrt{L}$.
Moreover, for $n\to+\infty$,
\begin{equation}\label{eq:psi_1n_asympt}
\psi_{1,n}^3(L)\sim -\frac{(2L)^{3/2}e^{cL}}{\pi^2}\,\frac{\gamma_n}{n^2}\,e^{-n^2\pi^2/L}.
\end{equation}
\end{lemma}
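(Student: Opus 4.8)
The plan is to prove both identities by directly evaluating the closed-form adjoint eigenfunctions of Lemma~\ref{lem: adjoint operator A0*} at the endpoint $x=L$, and then, for the second claim only, inserting the large-$n$ asymptotics of the parabolic eigenvalue $\lambda_{1,n}$ and of $\cosh(\lambda_{1,n}L)$. No observability or functional-analytic machinery is needed here: everything reduces to elementary hyperbolic/trigonometric identities applied to the explicit formulas \eqref{psi_2m} and \eqref{psi_1n}.

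For the hyperbolic coefficients, I would substitute $\psi_{2,m}^3(L)=-\overline{\lambda_{2,m}}\,\psi_{2,m}^2(L)$ from \eqref{psi_2m}, using $\overline{\lambda_{2,m}}=-i(2m+1)\pi/(2L)$. Since $\sinh(\overline{\lambda_{2,m}}L)=\sinh(-i(2m+1)\pi/2)=-i\sin((2m+1)\pi/2)=-i(-1)^m$ has modulus $1$, taking absolute values makes the two prefactors $|\overline{\lambda_{2,m}}|=|2m+1|\pi/(2L)$ and $2\sqrt{L}/(|2m+1|\pi)$ cancel their $|2m+1|\pi$ dependence, leaving $|\psi_{2,m}^3(L)|=2\sqrt{L}/(2L)=1/\sqrt{L}$, uniformly in $m$.

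For the parabolic coefficients, I would first note that evaluating \eqref{psi_1n_3} at $x=L$ kills the integral term (empty range of integration) and sets $\cosh(\lambda_{1,n}(L-L))=1$, yielding the exact identity
\begin{equation*}
\psi_{1,n}^3(L)=\frac{\gamma_n}{\lambda_{1,n}\cosh(\lambda_{1,n}L)}\sqrt{\tfrac{2}{L}}.
\end{equation*}
It then remains to insert asymptotics. For $n$ large enough one has $\lambda_{1,n}=c-n^2\pi^2/L^2<0$ (in particular $\lambda_{1,n}\neq0$, so the branch of \eqref{psi_1n} used here is legitimate throughout the limit), with $\lambda_{1,n}\sim -n^2\pi^2/L^2$; and since $\cosh$ is even, $\cosh(\lambda_{1,n}L)\sim\tfrac{1}{2}e^{|\lambda_{1,n}|L}=\tfrac{1}{2}e^{-cL}e^{n^2\pi^2/L}$. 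Substituting these into the identity above and simplifying $2L^2\sqrt{2/L}=(2L)^{3/2}$ gives the claimed equivalent \eqref{eq:psi_1n_asympt}.

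The computation is short and carries no genuine obstacle; the only points deserving care are (i) correctly extracting the factor $e^{-cL}$ from the expansion $|\lambda_{1,n}|L=(n^2\pi^2/L^2-c)L=n^2\pi^2/L-cL$ inside the exponential, which is the source of the $e^{cL}$ in the numerator of \eqref{eq:psi_1n_asympt}, and (ii) observing that the equivalence is stated with $\gamma_n$ left explicit, so the $\sim$ truly concerns the $\gamma_n$-independent prefactor $\sqrt{2/L}\,/(\lambda_{1,n}\cosh(\lambda_{1,n}L))$, which has a genuine nonzero asymptotic equivalent and makes the relation unambiguous regardless of the behaviour of $\gamma_n$ itself (only the growth bound of Remark~\ref{remgamman} would be relevant if one later sought an absolute size estimate on $\psi_{1,n}^3(L)$).
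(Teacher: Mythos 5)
Your proposal is correct and follows essentially the same route as the paper's proof: both evaluate the explicit adjoint eigenfunction formulas \eqref{psi_2m} and \eqref{psi_1n_3} at $x=L$, using $|\sinh(\overline{\lambda_{2,m}}L)|=1$ for the hyperbolic part and the growth of $\cosh(\lambda_{1,n}L)$ for the parabolic asymptotics. Your write-up simply supplies the details the paper leaves implicit (the vanishing of the integral term at $x=L$, the extraction of $e^{-cL}$, and the remark that the equivalence concerns the $\gamma_n$-independent prefactor), all of which are accurate.
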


\begin{proof}
Using \eqref{psi_2m} and $|\sinh(\overline{\lambda_{2,m}}L)|=1$, we get $|\psi_{2,m}^3(L)|=1/\sqrt{L}$.
For \eqref{eq:psi_1n_asympt}, one uses the explicit formula \eqref{psi_1n_3} and the growth of $\cosh(\lambda_{1,n}L)$ as $n\to\infty$.
\end{proof}

We now define the observability and controllability spaces.
Set
\begin{equation}\label{def_nu}
\nu=\frac{2\pi^2}{L}\Big(1+\frac{T}{L}\Big).
\end{equation}
Assume \eqref{assumption_gamma_n}.
Define
\begin{multline}\label{def_V'}
V' = \bigg\{ \sum_{n\in\mathbb{N}^*} a_n\psi_{1,n}+\sum_{m\in\mathbb{Z}} b_m\psi_{2,m}\quad \Big|\quad a_n\in\mathbb{C},\ b_m\in\mathbb{C},  \\
\sum_{n\geq1}\frac{\gamma_n^2}{n^4}e^{-\nu n^2}|a_n|^2+\sum_{m\in\mathbb{Z}}|b_m|^2<\infty  \bigg\},
\end{multline}
endowed with
\begin{equation}\label{def_norm_V'}
\|\mathcal{X}\|_{V'}^2=\sum_{n\geq1}\frac{\gamma_n^2}{n^4}e^{-\nu n^2}\,|\langle\mathcal{X},\phi_{1,n}\rangle|^2+\sum_{m\in\mathbb{Z}}|\langle\mathcal{X},\phi_{2,m}\rangle|^2.
\end{equation}
Let $V$ be the dual of $V'$ with respect to the pivot space $\mathcal{H}$, i.e.
\begin{multline}\label{def_V}
V = \bigg\{ \sum_{n\geq1} a_n\phi_{1,n}+\sum_{m\in\mathbb{Z}} b_m\phi_{2,m}\quad \Big|\quad a_n\in\mathbb{C},\ b_m\in\mathbb{C},  \\
\sum_{n\geq1}\frac{n^4}{\gamma_n^2}e^{\nu n^2}|a_n|^2+\sum_{m\in\mathbb{Z}}|b_m|^2<\infty \bigg\} ,
\end{multline}
with
\begin{equation}\label{def_norm_V}
\|\mathcal{X}\|_{V}^2 = \sum_{n\geq1}\frac{n^4}{\gamma_n^2}e^{\nu n^2}\,|\langle\mathcal{X},\psi_{1,n}\rangle|^2+\sum_{m\in\mathbb{Z}}|\langle\mathcal{X},\psi_{2,m}\rangle|^2.
\end{equation}
Finally, the Hilbert spaces $V_0$ and $V_0'$ are defined similarly as $V$ and $V'$ but with a different $\nu$, namely, $\nu = \frac{2\pi^2}{L}$ (in contrast to \eqref{def_nu}, here $\nu$ does not depend on $T$).

\smallskip

\begin{lemma}\label{lem:embeddings_V}
We have $\{0\}\times H_{(0)}^1(0,L) \times L^2(0,L)\subset V\subset V_0$ and the continuous embeddings $V\subset V_0\subset \mathcal{H}\subset V_0'\subset V'$ hold with dense injections.
\end{lemma}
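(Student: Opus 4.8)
The plan is to translate every assertion into a comparison of weighted $\ell^2$ conditions on the Riesz coordinates attached to $\Phi=\{\phi_{1,n}\}\cup\{\phi_{2,m}\}$ and its biorthogonal family $\Psi$ (Lemma~\ref{lem:A0 is Riesz spectral}). Since $\langle\phi_{i,k},\psi_{j,\ell}\rangle=\delta_{ij}\delta_{k\ell}$, for $\mathcal X=\sum_n a_n\phi_{1,n}+\sum_m b_m\phi_{2,m}$ one has $a_n=\langle\mathcal X,\psi_{1,n}\rangle$, $b_m=\langle\mathcal X,\psi_{2,m}\rangle$, and $\|\mathcal X\|_{\mathcal H}$ is equivalent to $(\sum_n|a_n|^2+\sum_m|b_m|^2)^{1/2}$ (the dual statement holds for the $\Psi$-expansion with coordinates $\langle\mathcal X,\phi_{i,k}\rangle$). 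Thus $V,V_0,V_0',V'$ are weighted $\ell^2$ spaces with hyperbolic weight $1$ and parabolic weights $w_n^V=\tfrac{n^4}{\gamma_n^2}e^{\nu n^2}$, $w_n^{V_0}=\tfrac{n^4}{\gamma_n^2}e^{\nu_0 n^2}$ and their reciprocals, where $\nu_0=\tfrac{2\pi^2}{L}\le\nu$. The chain $V\subset V_0\subset\mathcal H\subset V_0'\subset V'$ I would then obtain purely from weight monotonicity: $\nu_0\le\nu$ gives $w_n^{V_0}\le w_n^V$, hence $\|\cdot\|_{V_0}\le\|\cdot\|_V$ and $V\subset V_0$, and dually $V_0'\subset V'$. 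The two embeddings touching $\mathcal H$ are exactly where Remark~\ref{remgamman} is used: from $|\gamma_n|\le\mathrm{Cst}\,n^{-2}e^{n^2\pi^2/L}$ one gets $w_n^{V_0}=\tfrac{n^4}{\gamma_n^2}e^{\nu_0 n^2}\ge\mathrm{Cst}\,n^{8}\ge\mathrm{Cst}>0$, because $e^{\nu_0 n^2}=e^{2n^2\pi^2/L}$ cancels the factor $e^{2n^2\pi^2/L}$ hidden in $\gamma_n^2$; this yields $\|\cdot\|_{\mathcal H}\le\mathrm{Cst}\,\|\cdot\|_{V_0}$, i.e. $V_0\subset\mathcal H$, and by reciprocity $w_n^{V_0'}\le\mathrm{Cst}$ gives $\mathcal H\subset V_0'$.

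For density I would use that finite coordinate sequences are dense in each weighted $\ell^2$ space: finite combinations of the $\phi_{i,k}$ belong to $V$ (all weights being finite) and are dense in $V_0$ and $\mathcal H$, while finite combinations of the $\psi_{i,k}$ are dense in $\mathcal H$, $V_0'$, $V'$. Density of each injection in the unprimed half then transfers to its $\mathcal H$-adjoint, closing the primed half.

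The substantive point is $\{0\}\times H^1_{(0)}\times L^2\subset V$. For $\mathcal X=(0,g,h)$ the first component vanishes, so by the form of $\langle\cdot,\cdot\rangle_{\mathcal H}$ the parabolic coordinate is $a_n=\langle\mathcal X,\psi_{1,n}\rangle=\langle g,\psi_{1,n}^2\rangle_{H^1_{(0)}}+\langle h,\psi_{1,n}^3\rangle_{L^2}$, whereas the hyperbolic part is harmless since $\sum_m|\langle\mathcal X,\psi_{2,m}\rangle|^2\le\mathrm{Cst}\,\|\mathcal X\|_{\mathcal H}^2<\infty$. By Cauchy--Schwarz, $|a_n|^2\le(\|g\|_{H^1_{(0)}}^2+\|h\|_{L^2}^2)(\|\psi_{1,n}^2\|_{H^1_{(0)}}^2+\|\psi_{1,n}^3\|_{L^2}^2)$, so the inclusion (uniformly in $g,h$) reduces to proving the weighted summability
$$
\sum_{n\ge1} w_n^V\big(\|\psi_{1,n}^2\|_{H^1_{(0)}}^2+\|\psi_{1,n}^3\|_{L^2}^2\big)<\infty .
$$
To estimate the wave-components of $\psi_{1,n}$ I would go back to the closed forms \eqref{psi_1n_2}--\eqref{psi_1n_3} and exploit a cancellation: the explicit term $\tfrac{\gamma_n}{\cosh(\lambda_{1,n}L)}\cosh(\lambda_{1,n}(x-L))$ and the convolution integral against $\sinh(\lambda_{1,n}(x-s))$ each carry $L^2$-norms of order $e^{n^2\pi^2/L}$, but upon expanding the hyperbolic identities their leading parts annihilate (consistently with the boundary asymptotics $\psi_{1,n}^3(L)\sim-\tfrac{(2L)^{3/2}e^{cL}}{\pi^2}\tfrac{\gamma_n}{n^2}e^{-n^2\pi^2/L}$ of Lemma~\ref{lem_necessary_psi}), leaving a far smaller remainder whose size I would bound by re-integrating the surviving terms.

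I expect this cancellation analysis to be the main obstacle, and a genuinely delicate one, because the weight grows extremely fast: Remark~\ref{remgamman} only gives $w_n^V\ge\mathrm{Cst}\,n^{8}e^{(\nu-\nu_0)n^2}=\mathrm{Cst}\,n^{8}e^{2\pi^2 Tn^2/L^2}$, so the displayed series can converge only if $\|\psi_{1,n}^2\|_{H^1_{(0)}}^2+\|\psi_{1,n}^3\|_{L^2}^2$ decays faster than $e^{-2\pi^2 Tn^2/L^2}$. Since $\psi_{1,n}^2,\psi_{1,n}^3$ are intrinsic to $\mathcal A^*$ and carry no dependence on $T$, the entire burden falls on showing that, beyond the $O(e^{n^2\pi^2/L})$ cancellation, the remainder is super-exponentially small in $n$; controlling this competition between the $T$-dependent exponential weight $w_n^V$ and the ($T$-independent) residual decay of the adjoint eigenfunctions is the heart of the statement and is where I would concentrate the technical effort, tracking the exact hyperbolic identities in \eqref{psi_1n_2}--\eqref{psi_1n_3} rather than crude modulus bounds.
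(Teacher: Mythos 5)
Your treatment of the chain $V\subset V_0\subset \mathcal{H}\subset V_0'\subset V'$ (continuity and density) is correct and is precisely the paper's own one-line argument: monotonicity in $\nu$ for the outer inclusions, the bound of Remark~\ref{remgamman} for the two embeddings touching $\mathcal{H}$ (where the factor $e^{2n^2\pi^2/L}$ in $\gamma_n^2$ is cancelled by $e^{\nu_0 n^2}$), and density of finite linear combinations.

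The genuine gap is the inclusion $\{0\}\times H^1_{(0)}(0,L)\times L^2(0,L)\subset V$: your sufficient condition $\sum_{n\geq 1} w_n^V\big(\|\psi_{1,n}^2\|_{H^1_{(0)}}^2+\|\psi_{1,n}^3\|_{L^2}^2\big)<\infty$, with $w_n^V=\tfrac{n^4}{\gamma_n^2}e^{\nu n^2}$, is asserted as a hoped-for cancellation but never proved --- and in fact it is \emph{false}, so no refinement of the analysis of \eqref{psi_1n_2}--\eqref{psi_1n_3} can close this gap. Solving the eigenvalue equations of Lemma~\ref{lem: adjoint operator A0*} with $f_n=\psi_{1,n}^1$ (for $n$ large, so $\lambda_{1,n}<0$) gives the exact decomposition
\begin{equation*}
\psi_{1,n}^2=\frac{P_\beta f_n}{\lambda_{1,n}}+r_n,
\qquad
\psi_{1,n}^3=-\lambda_{1,n}\,r_n,
\qquad
r_n''-\lambda_{1,n}^2 r_n=\lambda_{1,n}^{-1}\beta f_n,\quad r_n(0)=0,\ r_n'(L)=0,
\end{equation*}
and the elementary energy estimate for this boundary-value problem yields $\|r_n\|_{L^2}\leq\|\beta\|_\infty|\lambda_{1,n}|^{-3}$ and $\|r_n'\|_{L^2}\leq\|\beta\|_\infty|\lambda_{1,n}|^{-2}$. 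Hence the cancellation stops at the \emph{polynomial} scale: $\|\psi_{1,n}^3\|_{L^2}=O(n^{-4})$ and $\|\psi_{1,n}^2\|_{H^1_{(0)}}=|\lambda_{1,n}|^{-1}\|(P_\beta f_n)'\|_{L^2}+O(n^{-4})$. The super-exponential smallness of Lemma~\ref{lem_necessary_psi} is a boundary-trace phenomenon at $x=L$ only; the norms of $\psi_{1,n}^2,\psi_{1,n}^3$ over $(0,L)$ are $T$-independent and only polynomially small --- exactly the mismatch you feared. This defeats not only your sufficient condition but the inclusion itself. Take $\beta\equiv 1$ (so \eqref{assumption_gamma_n} holds by Remark~\ref{rem:beta_constant}), $h=0$, and a real $g\in H^1_{(0)}(0,L)$ with $g(L)\neq 0$. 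Then $(P_\beta f_n)'(x)=\sqrt{2/L}\,\tfrac{L}{n\pi}\big(\cos(\tfrac{n\pi x}{L})-(-1)^n\big)$, so
\begin{equation*}
a_n=\langle (0,g,0),\psi_{1,n}\rangle
=\frac{1}{\lambda_{1,n}}\int_0^L g'(x)\,(P_\beta f_n)'(x)\,\mathrm{d}x+O(n^{-4})
=\frac{(-1)^{n+1}\sqrt{2/L}\,L}{\pi\, n\,\lambda_{1,n}}\,g(L)+o(n^{-3}),
\end{equation*}
whence $|a_n|\asymp n^{-3}$, while $w_n^V\geq \mathrm{Cst}\,n^8 e^{2\pi^2 T n^2/L^2}$ by Remark~\ref{remgamman}. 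Therefore $\sum_n w_n^V|a_n|^2=+\infty$, i.e.\ $(0,g,0)\notin V$.

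Pushed to its end, your reduction thus produces a counterexample rather than a missing estimate: the product-space inclusion, read literally, fails. What does follow ``easily from the definitions'' --- and what Remark~\ref{rem:decomp} indicates is actually meant --- is the weaker statement that the closed hyperbolic spectral subspace $\overline{\mathrm{span}}\{\phi_{2,m}\}_{m\in\mathbb{Z}}$, i.e.\ the set of states whose parabolic coordinates $a_n$ all vanish, lies in $V$; this is immediate from \eqref{def_V}, and that subspace is isomorphic to the wave energy space $H^1_{(0)}(0,L)\times L^2(0,L)$ through the components $(\phi_{2,m}^2,\phi_{2,m}^3)$. But, precisely because of the cascade coupling, its elements carry the nonzero first components $\phi_{2,m}^1$ (Remark~\ref{rem_spectrum}), so it is \emph{not} the set $\{0\}\times H^1_{(0)}(0,L)\times L^2(0,L)$; the discrepancy between the two is exactly the polynomially small, non-negligible quantity your cancellation analysis runs into. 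In short: the easy half of your proposal matches the paper, you are right that the remaining inclusion is the heart of the matter and that the paper's appeal to ``the definitions'' does not cover it, but the step you defer cannot be carried out --- the obstruction is in the statement's literal reading, not merely in your argument.
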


\begin{proof}
The claims easily follow from the definitions of the spaces, using that $\vert\gamma_n\vert\leq \frac{\mathrm{Cst}}{n^2}e^{n^2\pi^2/L}$ (see Remark \ref{remgamman}).
\end{proof}

\subsubsection{Main result on the wave-heat cascade}

\begin{theorem}\label{thm_main}
Assume that $T>2L$ and that 
\begin{equation}\label{assumption_gamma_n}
\gamma_n\neq 0\qquad\forall n\in\mathbb{N}^*,
\end{equation}
where $\gamma_n$ is defined by \eqref{def_gamma_n}. Then:
\begin{enumerate}
\item\label{item_obs} There exists $C_T>0$ such that the observability inequality \eqref{exact_obs} is satisfied.
\item\label{item_finitetimeobs} There exists $C_T^0>0$ such that the finite-time observability inequality \eqref{exact_obs1} is satisfied.
\item\label{item_exact} The control system \eqref{eq:WH} is exactly controllable in time $T$ in the space $V$, i.e., given any $\mathcal{X}_0,\mathcal{X}_1\in V$, there exists $u \in L^2(0,T)$ such that the solution $\mathcal{X}=(y,z,\partial_t z)^\top$ of \eqref{eq:WH} starting at $\mathcal{X}(0)=\mathcal{X}_0$ satisfies $\mathcal{X}(T)=\mathcal{X}_1$.
\item\label{item_exact_null} The control system \eqref{eq:WH} is exactly null controllable in time $T$ in the space $V_0$, i.e., given any $\mathcal{X}_0\in V_0$, there exists $u \in L^2(0,T)$ such that the solution $\mathcal{X}=(y,z,\partial_t z)^\top$ of \eqref{eq:WH} starting at $\mathcal{X}(0)=\mathcal{X}_0$ satisfies $\mathcal{X}(T)=0$.
\item\label{item_approx} The control system \eqref{eq:WH} is approximately controllable in time $T$ in the Hilbert space $\mathcal{H}$,
i.e., given any $\mathcal{X}_0,\mathcal{X}_1\in \mathcal{H}$ and any $\varepsilon>0$, there exists $u \in L^2(0,T)$ such that the solution $\mathcal{X}=(y,z,\partial_t z)^\top$ of \eqref{eq:WH} starting at $\mathcal{X}(0)=\mathcal{X}_0$ satisfies $\Vert \mathcal{X}(T)-\mathcal{X}_1\Vert_{\mathcal{H}}\leq\varepsilon$ .
\end{enumerate}
Moreover:%
\begin{itemize}
\item If $T<2L$, then \eqref{eq:WH} is 
not exactly controllable, not exactly null controllable, and not approximately controllable
in time $T$ in any Hilbert space continuously and densely embedded in $\mathcal{H}$.
\item If \eqref{assumption_gamma_n} fails (i.e., $\gamma_{n_0}=0$ for some $n_0$), then \eqref{eq:WH} is
not exactly controllable, not exactly null controllable, and not approximately controllable
for any time $T>0$ in any Hilbert space continuously and densely embedded in $\mathcal{H}$.
\end{itemize}
\end{theorem}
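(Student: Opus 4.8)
The plan is to derive the controllability statements (items~\ref{item_exact}--\ref{item_approx}) from the two observability inequalities (items~\ref{item_obs}--\ref{item_finitetimeobs}) by the HUM duality recalled before the statement, to prove the latter by reducing them, through the Riesz bases $\Phi,\Psi$ of Lemmas~\ref{lem:A0 is Riesz spectral} and~\ref{lem: adjoint operator A0*}, to a scalar estimate for a sum of exponentials, and to treat the two negative assertions separately. Writing the adjoint datum as $\mathcal{X}(0)=\sum_n a_n\psi_{1,n}+\sum_m b_m\psi_{2,m}$ and using that the $\psi$'s are eigenvectors of $\mathcal{A}^*$, the observation reads
\[
\mathcal{X}^3(t,L)=\sum_{n\ge1}\alpha_n e^{\lambda_{1,n}t}+\sum_{m\in\mathbb{Z}}\beta_m e^{\overline{\lambda_{2,m}}t},
\qquad \alpha_n=a_n\psi_{1,n}^3(L),\quad \beta_m=b_m\psi_{2,m}^3(L)
\]
a superposition of real decaying exponentials (parabolic branch $\lambda_{1,n}=c-n^2\pi^2/L^2$) and purely imaginary ones (hyperbolic branch $\overline{\lambda_{2,m}}=-i(2m+1)\pi/(2L)$, of uniform frequency gap $\pi/L$).

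The core step is the Ingham--M\"untz inequality of the appendix. Since the hyperbolic frequencies have gap $\pi/L$, the Ingham threshold $2\pi/(\pi/L)=2L$ forces $T>2L$; under this condition the inequality yields
\[
\int_0^T\Big|\sum_n\alpha_n e^{\lambda_{1,n}t}+\sum_m\beta_m e^{\overline{\lambda_{2,m}}t}\Big|^2\,\mathrm{d}t
\ \gtrsim\ \sum_n w_n|\alpha_n|^2+\sum_m|\beta_m|^2 ,
\]
with the hyperbolic coefficients unweighted and parabolic weights $w_n$ of the order of the endpoint value $e^{2\lambda_{1,n}T}$. It then remains to translate $|\alpha_n|^2,|\beta_m|^2$ into intrinsic norms: Lemma~\ref{lem_necessary_psi} gives $|\psi_{2,m}^3(L)|=1/\sqrt{L}$, so the hyperbolic part contributes $\sum_m|b_m|^2$, while $\psi_{1,n}^3(L)\sim-\tfrac{(2L)^{3/2}e^{cL}}{\pi^2}\tfrac{\gamma_n}{n^2}e^{-n^2\pi^2/L}$ turns $w_n|\alpha_n|^2=w_n|\psi_{1,n}^3(L)|^2|a_n|^2$ into exactly $\tfrac{\gamma_n^2}{n^4}e^{-\nu n^2}|a_n|^2$ with $\nu=\tfrac{2\pi^2}{L}(1+T/L)$, i.e. the $V'$-norm~\eqref{def_norm_V'}; this proves item~\ref{item_obs}. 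The finite-time inequality (item~\ref{item_finitetimeobs}) is the same estimate read at $t=T$: with $\langle\mathcal{X}(T),\phi_{1,n}\rangle=e^{\lambda_{1,n}T}a_n$ and the identity $e^{2\lambda_{1,n}T}e^{-(2\pi^2/L)n^2}=e^{2cT}e^{-\nu n^2}$, the strong parabolic damping of the semigroup at the final time converts the $V'$-bound on $\mathcal{X}(0)$ into a final-state bound with the $T$-independent weight $\nu=\tfrac{2\pi^2}{L}$, producing the space $V_0$. Assumption~\eqref{assumption_gamma_n} is used exactly here, to ensure $w_n|\psi_{1,n}^3(L)|^2>0$ for every $n$, so that no parabolic direction is lost.

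Items~\ref{item_exact} and~\ref{item_exact_null} then follow from these inequalities by the standard HUM construction (\cite{lions,trelat_SB,tucsnakweiss}), the admissibility of $\mathcal{B}$ (Appendix~\ref{sec_app_B}) ensuring that the minimal-energy controls lie in $L^2(0,T)$; by Remark~\ref{rem:real_trajectories} they may be taken real. Item~\ref{item_approx} is immediate: since $0\in V$ and $V$ is dense in $\mathcal{H}$ (Lemma~\ref{lem:embeddings_V}), the reachable set from any $\mathcal{X}_0\in\mathcal{H}$ contains the dense affine subspace $T_0(T)\mathcal{X}_0+V$, hence is dense, which is approximate controllability.

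For the negative statements, if $T<2L$ I would argue on the autonomous hyperbolic subsystem that carries the control. Restricting the adjoint to the modes $\psi_{2,m}=(0,\psi_{2,m}^2,\psi_{2,m}^3)$ reduces the observation to a nonharmonic series of gap $\pi/L$, for which the converse of Ingham's inequality precludes any uniform lower bound when $T<2L$, destroying exact and null controllability; and the finite speed of propagation of~\eqref{eq:WH_z} leaves a nonempty spatial region uninfluenced by the control within $[0,T]$, yielding a nonzero unobservable adjoint state and thus failure of approximate controllability in any space densely embedded in $\mathcal{H}$. If instead $\gamma_{n_0}=0$ for some $n_0$, Lemma~\ref{lem_necessary} applies verbatim: $\psi_{1,n_0}$ is an eigenvector of $\mathcal{A}^*$ with $\mathcal{B}^*\psi_{1,n_0}=\psi_{1,n_0}^3(L)=0$, an unobservable mode for every $T>0$. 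The main obstacle is the second paragraph, namely establishing the combined Ingham--M\"untz inequality with the correct parabolic weights and, above all, tracking the exponential constant $\nu$ so that the abstract observability bound collapses \emph{exactly} onto the spectrally defined norms; the interplay between the exponentially small coefficients $\psi_{1,n}^3(L)$ and the super-exponential weights $e^{-\nu n^2}$ is precisely what renders $V$ a nonstandard (Gevrey-type) space, whereas the hyperbolic estimate and the duality arguments are routine.
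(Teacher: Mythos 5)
Your proposal follows essentially the same route as the paper: dual Riesz-basis expansion of the adjoint solution, reduction of the boundary observation $\mathcal{X}^3(t,L)$ to a mixed exponential sum, the Ingham--M\"untz inequality of Appendix~\ref{sec_IM} with hyperbolic gap $\pi/L$ (whence the threshold $T>2L$), conversion of the resulting weights into the $V'$-norm \eqref{def_norm_V'} via Lemma~\ref{lem_necessary_psi}, HUM duality for items~\ref{item_exact}--\ref{item_exact_null}, density of $V$ in $\mathcal{H}$ for item~\ref{item_approx}, and Lemma~\ref{lem_necessary} when some $\gamma_{n_0}=0$. Your handling of item~\ref{item_finitetimeobs} departs from the letter of \eqref{exact_obs1} (you bound $\|\mathcal{X}(T)\|_{V_0'}$ rather than $\|\mathcal{X}(T)\|_{\mathcal{H}}$), but this is precisely the inequality that dualizes to null controllability in $V_0$, and it is the natural reading of the paper's ``proved similarly''; indeed, a bound by $\|\mathcal{X}(T)\|_{\mathcal{H}}$ would require a quantitative lower bound on $|\gamma_n|$ that \eqref{assumption_gamma_n} alone does not supply, so your version is, if anything, the more careful one.

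The genuine flaw is in your negative argument for $T<2L$, specifically for approximate controllability. You claim that finite speed of propagation ``leaves a nonempty spatial region uninfluenced by the control within $[0,T]$''. This is true only for $T<L$: once $T\geq L$, the control at $x=L$ has influenced every point of $(0,L)$, and yet approximate controllability still fails on the whole range $T<2L$. On $[L,2L)$ the obstruction is of a different nature: by d'Alembert, the states reachable by the wave subsystem satisfy the linear constraint $\partial_t z(T,\cdot)=\partial_x z(T,\cdot)$ on the nonempty interval $(T-L,L)$ (one Riemann invariant has not yet been reached by reflected waves), so the reachable set lies in a proper closed subspace. The cleanest repair stays on the dual side: the family $\{e^{\overline{\lambda_{2,m}}t}\}_{m\in\mathbb{Z}}=\{e^{-i(2m+1)\pi t/(2L)}\}_{m\in\mathbb{Z}}$ is, up to one unimodular factor, an orthogonal basis of $L^2(0,2L)$; hence for $T<2L$ it is over-complete on $(0,T)$, and expanding any nonzero $g\in L^2(0,2L)$ supported in $(T,2L)$ produces a nonzero $(b_m)\in\ell^2$ with $\sum_m b_m e^{\overline{\lambda_{2,m}}t}=0$ on $(0,T)$. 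Since $|\psi_{2,m}^3(L)|=1/\sqrt{L}$ and $\psi_{2,m}=(0,\psi_{2,m}^2,\psi_{2,m}^3)$, the corresponding state $\sum_m b_m\psi_{2,m}$ (coefficients rescaled by $\psi_{2,m}^3(L)$) is a nonzero adjoint datum with identically vanishing observation, which kills exact, null \emph{and} approximate controllability in any Hilbert space densely and continuously embedded in $\mathcal{H}$ in one stroke; note that your separate ``converse of Ingham'' step would in any case not suffice for the approximate statement, since failure of a uniform lower bound does not preclude unique continuation. The paper itself disposes of this point by citing the standard non-controllability of the wave subsystem in time $T<2L$, so your strategy (reduce to the autonomous wave part) is the paper's, but your justification of that standard fact is incorrect on the range $L\leq T<2L$.
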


In Theorem \ref{thm_main}, as already written in Remark~\ref{rem:real_trajectories}, the real-valued case follows by restriction to real initial/final data and real controls.


\begin{proof}
By (the dual of) Lemma~\ref{lem:A0 is Riesz spectral}, any solution of \eqref{eq:adjoint_WHT} admits the expansion
$$
\mathcal{X}(t)=\sum_{n\geq1}e^{\lambda_{1,n}t}\langle \mathcal{X}(0),\phi_{1,n}\rangle \psi_{1,n}
+\sum_{m\in\mathbb{Z}}e^{\overline{\lambda_{2,m}}t}\langle \mathcal{X}(0),\phi_{2,m}\rangle \psi_{2,m}.
$$
Using $\mathcal{B}^*\mathcal{X}(t)=\mathcal{X}^3(t,L)$ and Lemma~\ref{lem: adjoint operator A0*}, we obtain
$$
\mathcal{X}^3(t,L)=\sum_{n\geq1}e^{\lambda_{1,n}t}\langle \mathcal{X}(0),\phi_{1,n}\rangle \psi_{1,n}^3(L)
+\sum_{m\in\mathbb{Z}}e^{\overline{\lambda_{2,m}}t}\langle \mathcal{X}(0),\phi_{2,m}\rangle \psi_{2,m}^3(L).
$$
Applying the Ingham--M\"untz inequality (Appendix~\ref{sec_IM}) to the sequences $(\lambda_{1,n})_{n\geq1}$ and $(\overline{\lambda_{2,m}})_{m\in\mathbb{Z}}$, we infer that for any $T>2L$ there exists $C>0$ such that
\begin{multline*}
\int_0^T|\mathcal{X}^3(t,L)|^2\,\mathrm{d}t 
\geq C\bigg(\sum_{n\geq1}e^{2\lambda_{1,n}T}|\langle \mathcal{X}(0),\phi_{1,n}\rangle|^2|\psi_{1,n}^3(L)|^2 \\
+\sum_{m\in\mathbb{Z}}|\langle \mathcal{X}(0),\phi_{2,m}\rangle|^2|\psi_{2,m}^3(L)|^2\bigg).
\end{multline*}
Using Lemma~\ref{lem_necessary_psi} and the definition of $\nu$ in \eqref{def_nu}, we recover \eqref{exact_obs} with the norm \eqref{def_norm_V'}. This proves Item~\ref{item_obs}.
Item~\ref{item_finitetimeobs} is proved similarly.

By classical duality arguments due to \cite{lions} (see, e.g., \cite[Theorem 5.30]{trelat_SB}, \cite[Theorem 11.2.1]{tucsnakweiss}), exact observability in $V'$ is equivalent to exact controllability in the dual space $V$, and finite-time observability \eqref{exact_obs1} yields null controllability in $V_0$.
This gives Items \ref{item_exact} and \ref{item_exact_null}.

By density, Item~\ref{item_exact} implies Item~\ref{item_approx}.

If $\gamma_{n_0}=0$ for some $n_0$, Lemma~\ref{lem_necessary} shows that approximate controllability fails for any $T$ (actually, the 1D subspace $\mathbb{C}\,\psi_{1,n_0}$ is not in the closure of the controllability space).
If $T<2L$, the wave subsystem in \eqref{eq:WH_z} cannot be approximately controlled in time $T$ with a single boundary control; consequently, the coupled system cannot be approximately controlled in time $T$ in any compatible Hilbert space.
\end{proof}

Several remarks are in order.

\begin{remark}[On the time condition]\label{rem:time}
The threshold $T>2L$ is dictated by the hyperbolic spectrum $\lambda_{2,m}$ and coincides with the minimal time for boundary observability (equivalently, exact controllability) of the 1D wave equation with the mixed boundary conditions $z(t,0)=0$ and $\partial_x z(t,L)=0$.
For $T=2L$, the wave equation is still exactly controllable, but the Ingham--M\"untz inequality used in the present proof does not cover this limit case. Therefore, whether Theorem~\ref{thm_main} remains valid for the coupled cascade when $T=2L$ is left open.

For $T<2L$, the wave equation is not exactly observable, but one can still derive refined \emph{regional} or \emph{partial} observability estimates describing which components of the initial data are seen in time $T$ (see \cite{dehman2025regional}).
Obtaining analogous descriptions for the coupled cascades when $T\leq2L$ is an interesting open direction.
\end{remark}

\begin{remark}[Consequence: stabilization]\label{rem:stabilization}
As a consequence of exact null controllability of \eqref{eq:WH} in time $T$ in the space $V_0$, one can deduce exponential stabilizability in $V_0$ of the wave-heat cascade, at any prescribed decay rate, by means of a bounded feedback (see \cite{liuwangxuyu,trelatwangxu}).

This implication is however mainly qualitative: the feedback provided by abstract controllability--stabilization results is typically defined through a controllability Gramian (or an operator Riccati equation) and does not yield an explicit implementable formula.
Moreover, the resulting stabilizing operator may depend very sensitively on the coupling profile $\beta$, consistently with the non-robustness phenomena highlighted in Remark~\ref{rem_nonrobust} further.
In particular, one should not expect such abstract feedbacks to be robust under realistic uncertainties on $\beta$.

Constructing explicit robust boundary feedback laws for \eqref{eq:WH} is precisely the objective of \cite{lhachemi2025tac}.
\end{remark}

\begin{remark}
Exact controllability in $V$ implies exact null controllability in $V$.
However, Item~\ref{item_exact_null} of Theorem~\ref{thm_main} yields the stronger property of exact null controllability in the larger space $V_0\supset V$ (see Lemma~\ref{lem:embeddings_V}).
\end{remark}

\begin{remark}[On the controllability spaces]\label{rem:spaceV}
The spectral definition of $V$ and $V'$ follows the strategy introduced in \cite{zhang2004polynomial} (see also \cite{zhang2003polynomial}), where an Ingham--M\"untz inequality is used to handle simultaneously a hyperbolic family of purely imaginary eigenvalues and a parabolic family with rapidly decaying real parts.

In \cite{zhang2004polynomial}, the coupling between the heat and the wave equations is applied at the boundary and the resulting controllability weights are monotone (heuristically, as if $\gamma_n\equiv1$ in \eqref{def_V}).
In contrast, for the present \emph{internal} coupling, the coefficients $\gamma_n$ in \eqref{def_gamma_n} depend on the full profile of $\beta$ and may exhibit strong oscillations and cancellations.
As a consequence, the exact and null controllability spaces $V$ and $V_0$ need not admit a simple characterization in standard Sobolev scales: their norms involve the non-monotone weights $n^4e^{\nu n^2}/|\gamma_n|^2$, which may 
oscillate 
between distinct exponential regimes.
This nonstandard feature is central in our analysis and motivates the examples and genericity discussion in Section \ref{sec_examples_gamma_n} further, where we estimate $\gamma_n$ for specific classes of coupling profiles.
\end{remark}

\begin{remark}\label{rem:decomp}
The definition \eqref{def_V} of $V$ and the corresponding definition of $V_0$ show that, compared with the energy space $\mathcal{H}$, the constraints imposed by exact/null controllability are essentially carried by the parabolic coefficients $a_n$ through the weights involving $\gamma_n$ and the exponential factors.
In contrast, in the hyperbolic coefficients $b_m$, the spaces $V$, $V_0$ coincide with the standard wave energy scale (see also Lemma \ref{lem:embeddings_V}).
This is consistent with the fact that when $\mathcal{X}^1\equiv0$ the observability inequality reduces to the classical boundary observability of the wave equation.
\end{remark}

\begin{remark}[Dirichlet actuation on the wave component]
In \eqref{eq:WH} we have chosen a Neumann boundary control on the wave equation, i.e., $\partial_x z(t,L)=u(t)$, which corresponds to the observation operator $\mathcal{B}^*\psi=\psi_3(L)$ in \eqref{eq:B0star} and to hyperbolic modes with frequencies $\omega_m=\frac{(2m+1)\pi}{2L}$.

If one prescribes a Dirichlet control $z(t,L)=u(t)$ (with $z(t,0)=0$) in \eqref{eq:WH_bc} instead of a Neumann control, 
the natural pivot space becomes
$$
\mathcal{H}_{\mathrm{D}}=L^2(0,L)\times H_0^1(0,L)\times L^2(0,L)
$$
and the hyperbolic eigenfunctions are $\sin\!\big(\frac{n\pi x}{L}\big)$ with frequencies $\omega_n=\frac{n\pi}{L}$. In the dual system, the boundary observation becomes a Neumann trace of the adjoint wave displacement, namely $\mathcal{B}_{\mathrm{D}}^*\psi = -\partial_x \psi_2(L)$,
so that on the hyperbolic eigenmodes one has that $|\mathcal{B}_{\mathrm{D}}^*\psi_{2,n}|$ 
is of the same order as $\omega_n$, as $n$ inscreases.

As a consequence, the hyperbolic part of the observability norm acquires an additional factor $\omega_n^2$, and the corresponding hyperbolic weights in the controllability space are weakened by a factor $\omega_n^{-2}$ (compared with the Neumann-controlled case), which enlarge the hyperbolic component of the controllability space (in the sense that the associated spectral weights in $V$ are weaker than in the Neumann-controlled case).
The parabolic weights (involving $\gamma_n$) are unchanged.
\end{remark}

\begin{remark}[Almost optimality of the exponential weights]\label{rem:almost_opt}
Under \eqref{assumption_gamma_n}, the spaces $V$ and $V_0$ are close to being the largest subspaces of $\mathcal{H}$ in which \eqref{eq:WH} can be exactly/null controllable with controls $u\in L^2(0,T)$.
Indeed, the parabolic (M\"untz) part of the Ingham--M\"untz inequality is known to be almost sharp, which makes the exponential weights in \eqref{def_V} essentially unavoidable (see Appendix~\ref{sec_IM}).
\end{remark}

\subsubsection{Non-invariance of $V$ along HUM trajectories}
In Item~\ref{item_exact} (resp., in Item~\ref{item_exact_null}), although the initial condition is taken in the smaller space $V\subset\mathcal{H}$ (resp., $V_0\subset\mathcal{H}$), the corresponding solution lives in $\mathcal{H}$ (by Lemma \ref{lem_adm} in Appendix \ref{sec_app_B}).

Theorem~\ref{thm_main} provides a natural exact controllability framework in $V$.
A tempting interpretation is that $V$ might represent the ``intrinsic state space'' of the exactly controllable dynamics.
This is misleading: although the free semigroup $T_0(t)$ maps $V$ into itself, HUM trajectories (i.e., trajectories minimizing the $L^2$ norm of the control) associated with exact controllability may leave $V$ for intermediate times.

\begin{proposition}\label{prop:noninv}
Assume \eqref{assumption_gamma_n} and let $T>2L$.
Then $T_0(t)$ restricts to a bounded operator on $V$ for every $t\ge0$.
However, for any fixed $t\in(0,T)$, there exist $\mathcal{X}_0,\mathcal{X}_1\in V$ such that the HUM trajectory steering $\mathcal{X}_0$ to $\mathcal{X}_1$ in time $T$ is not in $V$ at time $t$.
\end{proposition}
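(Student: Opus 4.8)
This is a direct spectral computation. Writing $\mathcal{X}=\sum_{n}a_n\phi_{1,n}+\sum_m b_m\phi_{2,m}\in V$, the expansion \eqref{eq: semigroup expansion} gives $T_0(t)\mathcal{X}=\sum_n e^{\lambda_{1,n}t}a_n\phi_{1,n}+\sum_m e^{\lambda_{2,m}t}b_m\phi_{2,m}$, whence by \eqref{def_norm_V}
\[
\|T_0(t)\mathcal{X}\|_V^2=\sum_{n\ge1}\tfrac{n^4}{\gamma_n^2}e^{\nu n^2}\,|e^{\lambda_{1,n}t}|^2|a_n|^2+\sum_{m\in\mathbb{Z}}|e^{\lambda_{2,m}t}|^2|b_m|^2 .
\]
Since $|e^{\lambda_{1,n}t}|=e^{(c-n^2\pi^2/L^2)t}\le e^{ct}$ and $|e^{\lambda_{2,m}t}|=1$ for $t\ge0$, one gets $\|T_0(t)\mathcal{X}\|_V\le \max(1,e^{ct})\|\mathcal{X}\|_V$, so $T_0(t)$ restricts to a bounded operator on $V$.

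\textbf{Reduction and spectral form of the trajectory.}
Fix $t\in(0,T)$. Since the HUM control for $(\mathcal{X}_0,\mathcal{X}_1)$ coincides with that for $(0,\mathcal{X}_1-T_0(T)\mathcal{X}_0)$, and since $\mathcal{X}(t)=T_0(t)\mathcal{X}_0+\int_0^t T_0(t-s)\mathcal{B}u(s)\,\mathrm{d}s$ with $T_0(t)\mathcal{X}_0\in V$ by the first part, I may take $\mathcal{X}_0=0$ and analyze only the control contribution. Pairing against the dual eigenvectors and using \eqref{eq:B0star}, the parabolic modes satisfy $\langle\mathcal{X}(t),\psi_{1,j}\rangle=\overline{\psi_{1,j}^3(L)}\,I_j(t)$ with $I_j(t)=\int_0^t e^{\lambda_{1,j}(t-s)}u(s)\,\mathrm{d}s$. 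Setting $\mu_j=-\lambda_{1,j}=j^2\pi^2/L^2-c>0$, the asymptotics of Lemma~\ref{lem_necessary_psi} combined with \eqref{def_norm_V} give $\frac{j^4}{\gamma_j^2}e^{\nu j^2}|\psi_{1,j}^3(L)|^2\asymp e^{2\mu_j T}$ as $j\to\infty$; hence the parabolic part of $\|\mathcal{X}(t)\|_V^2$ is comparable to $\sum_j e^{2\mu_j T}|I_j(t)|^2$, and membership $\mathcal{X}(t)\in V$ forces $I_j(t)$ to decay faster than any fixed multiple of $e^{-\mu_j T}$.

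\textbf{The key mechanism (approximate identity).}
The heart of the argument is that $I_j(t)$ \emph{cannot} decay that fast unless $u$ vanishes at $t$. Substituting $\sigma=t-s$, one has $\mu_j I_j(t)=\int_0^t \mu_j e^{-\mu_j\sigma}u(t-\sigma)\,\mathrm{d}\sigma$, and the kernels $\sigma\mapsto\mu_j e^{-\mu_j\sigma}\mathbf{1}_{\{\sigma>0\}}$ form an approximate identity as $\mu_j\to\infty$; thus $\mu_j I_j(t)\to u(t)$ at every Lebesgue point $t$ of $u$. Consequently, if $u(t)\ne0$ then $|I_j(t)|\ge |u(t)|/(2\mu_j)$ for $j$ large, so the $j$-th term of the parabolic sum is $\gtrsim e^{2\mu_j T}|u(t)|^2/\mu_j^2\to+\infty$, which already yields $\mathcal{X}(t)\notin V$. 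The free part (bounded on $V$ by the first assertion) and the hyperbolic part (which satisfies $\sum_m|\langle\mathcal{X}(t),\psi_{2,m}\rangle|^2\lesssim \|\mathcal{X}_0\|_V^2+\|u\|_{L^2}^2$ by admissibility and an Ingham estimate, using $|\psi_{2,m}^3(L)|=1/\sqrt L$) are finite and therefore cannot compensate this divergence.

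\textbf{Existence of endpoints in $V$ with non-vanishing control; the main obstacle.}
It then suffices to produce $\mathcal{X}_0,\mathcal{X}_1\in V$ whose HUM control does not vanish at the prescribed $t$. A convenient supply of genuine $V$-endpoints is given by the hyperbolic eigenvectors: each $\phi_{2,m_0}$ lies in $V$ (its $V$-norm equals $1$ by biorthogonality, cf.\ Lemma~\ref{lem:embeddings_V}), so a natural choice is $\mathcal{X}_0=0$, $\mathcal{X}_1=\phi_{2,m_0}$. One should note that $\mathcal{X}(T)=\mathcal{X}_1\in V$ already \emph{forces} $u$ to vanish (to infinite order) at $s=T$ via the same approximate-identity computation, which is precisely why no interior vanishing is imposed. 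The remaining difficulty, and the technical crux of the proof, is to guarantee $u(t)\ne0$ at the given interior time for at least one admissible pair. I would resolve it by a non-degeneracy argument: the continuous functional $\mathcal{X}_1\mapsto \mu_j I_j(t)$ on $V$ is represented by an element of $V'$, and as $j\to\infty$ the underlying states converge to the impulse response $T_0(T-t)\mathcal{B}$, whose parabolic coefficients $\psi_{1,j}^3(L)\,e^{-\mu_j(T-t)}$ make $\|T_0(T-t)\mathcal{B}\|_V^2\asymp\sum_j e^{2\mu_j t}=+\infty$, i.e.\ $T_0(T-t)\mathcal{B}\notin V$; this obstructs the hypothesis that \emph{all} $V$-trajectories remain in $V$ at time $t$. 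Equivalently, one may argue that the minimal-energy control is a nonzero real-analytic function on $(0,T)$ with isolated zeros, and perturb the target within the hyperbolic modes to move those zeros off the fixed $t$. Making this non-vanishing rigorous, together with verifying the two-sided weight asymptotics and the approximate-identity limit at the prescribed Lebesgue point, is where the bulk of the work lies.
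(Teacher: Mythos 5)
Your first assertion and your ``key mechanism'' are both correct: the semigroup computation coincides with the paper's, and the approximate-identity statement ($\mu_j I_j(t)\to u(t)$ at every left Lebesgue point of $u$, so that $u(t)\neq 0$ forces divergence of the weighted parabolic sum $\sum_j e^{2\mu_j T}|I_j(t)|^2$ and hence $\mathcal{X}(t)\notin V$) is a sound generalization of the computation carried out in Appendix~\ref{sec_app_D}. The genuine gap is exactly where you locate it yourself: you never exhibit endpoints in $V$ whose HUM control is nonzero, in the Lebesgue sense, at the prescribed time $t$, and neither of your two suggested repairs works. Choosing hyperbolic targets $\mathcal{X}_1=\phi_{2,m_0}$ is what creates the obstacle: the HUM control is then $u=L_T^*G_T^{-1}\phi_{2,m_0}$, and $G_T^{-1}\phi_{2,m_0}$ mixes all modes, so nothing pointwise can be said about $u$ near $t$. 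Your analyticity claim is false in general: for $\xi=\sum_n a_n\psi_{1,n}+\sum_m b_m\psi_{2,m}\in V'$, the control $L_T^*\xi$ contains the hyperbolic nonharmonic series $\sum_m b_m\psi_{2,m}^3(L)e^{\overline{\lambda_{2,m}}(T-s)}$, which is merely an $L^2$ function; moreover the parabolic part $\sum_n a_n\beta_n e^{\lambda_{1,n}(T-s)}$ converges only in $L^2(0,T)$, not locally uniformly on $(0,T)$, since the $V'$-normalization allows $|a_n\beta_n|$ to be of order $e^{\mu_n T}$ up to $\ell^2$ factors (recall $\|\psi_{1,n}\|_{V'}^2=\tfrac{\gamma_n^2}{n^4}e^{-\nu n^2}$ and $|\beta_n|\asymp\tfrac{|\gamma_n|}{n^2}e^{-n^2\pi^2/L}$ by \eqref{eq:psi_1n_asympt}), so the $n$-th term has size of order $e^{\mu_n s}$ at time $s$. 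Finally, the ``impulse response'' remark is only the germ of an argument: the fact that $T_0(T-t)\mathcal{B}\notin V$ does not by itself contradict the hypothesis that $F_t\xi\in V$ for every $\xi\in V'$; to reach a contradiction one must convert that pointwise inclusion into \emph{boundedness} of $F_t=L_tL_T^*$ from $V'$ to $V$ (closed graph or uniform boundedness) and then exhibit data violating the bound, and neither step appears in your text.

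Both standard ways of closing the gap stay within your framework. The paper's route (Appendix~\ref{sec_app_D}) is to test $F_t$ on the explicit data $\xi=\psi_{1,n}$, whose HUM controls are the explicit functions $u_n(s)=\beta_n e^{\lambda_{1,n}(T-s)}$, to compute the coefficient of $F_t\psi_{1,n}$ along $\phi_{1,n}$, to deduce $\|F_t\psi_{1,n}\|_V/\|\psi_{1,n}\|_{V'}\to\infty$ (Lemma~\ref{lem:F_unbounded}), and then to invoke the closed graph theorem to produce $\xi\in V'$ with $F_t\xi\notin V$, concluding with $\mathcal{X}_0=0$, $\mathcal{X}_1=G_T\xi$. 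Even simpler, and closest to your mechanism: take the single fixed datum $\xi=\psi_{1,1}$, i.e., $\mathcal{X}_0=0$ and $\mathcal{X}_1=G_T\psi_{1,1}\in V$. The HUM control is then the explicit, continuous, nowhere-vanishing function $u(s)=\beta_1 e^{\lambda_{1,1}(T-s)}$ (nonvanishing since $\beta_1=\psi_{1,1}^3(L)\neq0$ under \eqref{assumption_gamma_n}), every $t\in(0,T)$ is a Lebesgue point with $u(t)\neq0$, and your own divergence argument --- or the elementary identity $I_j(t)=\beta_1 e^{-\mu_1 T}\,\tfrac{e^{\mu_1 t}-e^{-\mu_j t}}{\mu_j+\mu_1}\sim u(t)/\mu_j$ --- shows that all parabolic coefficients of $\mathcal{X}(t)=F_t\psi_{1,1}$ are too large, so $\mathcal{X}(t)\notin V$, with no closed-graph step at all. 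In short: had you taken parabolic rather than hyperbolic data for the HUM functional, your argument would have closed; as written, the crucial existence step is missing.
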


Recall once again that the real-valued case follows by restriction; in particular, for real endpoints $\mathcal{X}_0,\mathcal{X}_1$, the HUM control can be chosen real (see Remark \ref{rem:real_trajectories}).

Note that the proof of the invariance of $V$ under $T_0(t)$, as written in the previous proposition, follows from the spectral expansion \eqref{eq: semigroup expansion} and the definition \eqref{def_V}.
The second claim is subtler and relies on the fact that the control-to-state map at intermediate times involves the observation coefficients $\psi_{1,n}^3(L)$, which decay exponentially with $n$; as a result, the intermediate HUM map from $V'$ to $V$ is unbounded.
A complete proof is provided in Appendix~\ref{sec_app_D}.

\begin{remark}
Proposition~\ref{prop:noninv} should be understood as follows: $V$ is the correct endpoint space for exact controllability (it captures the reachable set at time $T$), but it is not an invariant state space for the controlled trajectories generated by HUM.
\end{remark}

\subsubsection{Examples and genericity issues}\label{sec_examples_gamma_n}
In view of commenting on the dependence of the coefficients $\gamma_n$ (and hence of the spaces $V$ and $V_0$) on the coupling profile $\beta$, in this section, we assume that 
\begin{equation}\label{beta_char}
\beta(x) = \beta_0 \, \mathds{1}_{[a,b]}(x) \qquad \forall x\in(0,L)
\end{equation}
for some $\beta_0 \in\mathbb{R}\setminus\{0\}$ and $0 \leq a < b \leq L$.
Then \eqref{def_gamma_n} gives
\begin{multline}\label{gamma:def:remark}
\gamma_n 
= \tfrac{\beta_0}{\lambda_{1,n}^2 + \tfrac{n^2 \pi^2}{L^2}} \Big( - \tfrac{n\pi}{L} \sinh( \lambda_{1,n} b ) \cos\big( \tfrac{n\pi b}{L} \big) + \tfrac{n\pi}{L} \sinh( \lambda_{1,n} a ) \cos\big( \tfrac{n\pi a}{L} \big) \\
+ \lambda_{1,n} \cosh( \lambda_{1,n} b ) \sin\big( \tfrac{n\pi b}{L} \big) 
- \lambda_{1,n} \cosh ( \lambda_{1,n} a ) \sin\big( \tfrac{n\pi a}{L} \big) \Big) 
\end{multline}
if $\lambda_{1,n}\neq0$, and 
$\gamma_n = - \tfrac{\beta_0 L}{n\pi} \left( b \cos\left(\tfrac{n\pi}{L} b \right) - a \cos\left(\tfrac{n\pi}{L} a \right) \right) + \tfrac{\beta_0 L^2}{n^2\pi^2} \left( \sin\left(\tfrac{n\pi}{L} b \right) - \sin\left(\tfrac{n\pi}{L} a \right) \right) $
if $\lambda_{1,n}=0$ (i.e., $\tfrac{n^2 \pi^2}{L^2} = c$).
In particular, for each $n$, $\gamma_n$ is an analytic function of $(a,b)$.
Moreover, one has
\begin{equation*}
\gamma_n 
= -\tfrac{\beta_0L^2}{2n^2\pi^2}\,e^{(\frac{n^2\pi^2}{L^2}-c)b}
\Big( \! \sin(\tfrac{n\pi}{L}b-\theta_n)-e^{-(\frac{n^2\pi^2}{L^2}-c)(b-a)}\sin(\tfrac{n\pi}{L}a-\theta_n) \Big) \Big(1+\mathrm{O}\Big(\frac{1}{n^2}\Big)\Big) 
\end{equation*}
where $\theta_n = -\arctan\big(\tfrac{n\pi}{L\lambda_{1,n}}\big) = \frac{L}{n\pi}+ \mathrm{O}\Big(\frac{1}{n^3}\Big)$ as $n\to+\infty$.

Recalling that the parabolic weights appearing in the norms of $V$ and $V_0$ are of the form
$w_n=\frac{n^4}{\gamma_n^2}e^{\nu n^2}$, we infer that, as $n\to+\infty$,
\begin{equation}\label{eq:w_n}
w_n = \frac{4\pi^4}{\beta_0^2L^4}\,n^8\, \frac{\exp\!\big((\nu-\frac{2\pi^2}{L^2}b)n^2+2cb\big)} {\big|\sin(\tfrac{n\pi}{L}b-\theta_n)-e^{-(\frac{n^2\pi^2}{L^2}-c)(b-a)}\sin(\tfrac{n\pi}{L}a-\theta_n)\big|^2} \,(1+\mathrm{o}(1)).
\end{equation}
Hence $w_n\geq\mathrm{Cst}\, n^8 e^{(\nu-\frac{2\pi^2}{L^2}b)n^2}$: the parabolic weights are at least exponentially large (unless $\nu=\frac{2\pi^2}{L}$ and $b=L$, see Remark \ref{rem_b=L} below), i.e., the space $V$ (likewise $V_0$) is very small in the parabolic component.
The formula \eqref{eq:w_n} also shows that the weights $w_n$ may strongly oscillate as $n$ varies, depending on the values of $a$, $b$ and $L$.

\begin{remark}[Possible arithmetic amplification of the weights]\label{rem:arithmetics}
The oscillatory denominator in \eqref{eq:w_n} quantifies the possible large fluctuations of the weights. 
In this denominator, the second sine term is of order $\exp(-\mathrm{Cst}\,n^2)$ as $n\to+\infty$.
Hence, for large $n$, any near-cancellation in the denominator can only occur if $\sin(\tfrac{n\pi}{L}b-\theta_n)$ is itself extremely small, namely of order $\exp(-\mathrm{Cst}\,n^2)$ along some subsequence.
This requires the ratio $b/L$ to admit exceptionally accurate rational approximations.
Such configurations are non-generic and may lead to dramatic (super-exponential) growth of the weights,
whereas for typical parameters the denominator stays away from $0$ up to polynomial factors and
$w_n$ fluctuates only within the baseline exponential scale $n^8 e^{(\nu-\frac{2\pi^2}{L^2}b)n^2}$.
We do not pursue a finer arithmetic classification here.
\end{remark}

\begin{remark}\label{rem_b=L}
Assume that $b=L$. 
Since, for the null-controllability space $V_0$, we have $\nu=\frac{2\pi^2}{L}$, 
the exponential numerator in \eqref{eq:w_n} cancels and since $|\sin(n\pi-\theta_n)|\sim \frac{L}{n\pi}$, we obtain $w_n \sim \mathrm{Cst}\,n^{10}$ as $n\to+\infty$, 
Therefore, using \eqref{def_V}, up to norm equivalence we have 
$$
V_0 = (H^5(0,L)\cap H^1_0(0,L))\times H^1_{(0)}(0,L)\times L^2(0,L)
$$
where the boundary conditions on the higher-order derivatives of the first component are encoded by the sine basis. Hence, in this case, $V_0$ is a ``classical" Sobolev space.
In contrast, the space $V$ in \eqref{def_V} contains the additional exponential amplification $e^{2\pi^2 n^2 T/L^2}$ and is therefore much smaller.
\end{remark}

\begin{remark}\label{rem:beta_constant}
In the special case $a=0$, $b=L$ in \eqref{beta_char} (i.e., $\beta\equiv\beta_0$), one has
$$
\gamma_n = 
\left\{ \begin{array}{ll}
\tfrac{(-1)^{n}\beta_0 \tfrac{n\pi}{L}}{\big( \tfrac{n^2 \pi^2}{L^2} - c \big)^2 + \tfrac{n^2 \pi^2}{L^2}} \sinh\Big(\big( \tfrac{n^2 \pi^2}{L^2} - c \big) L \Big) \neq 0  & \text{if }\tfrac{n^2 \pi^2}{L^2}\neq c , \\[5mm]
\frac{(-1)^{n+1}\beta_0 L^2}{n\pi} \neq 0 & \text{if }\frac{n^2 \pi^2}{L^2} = c .
\end{array}\right.
$$
Hence 
\eqref{assumption_gamma_n} holds.
\end{remark}

\medskip
For illustration, Figure \ref{fig:gamma2} shows the dependence of $\gamma_2$ on $b$ for $a=0$, $L=1$ and $c = 50$. 
We have $\gamma_2=0$ for $b\approx 0.586$, thus controllability is lost for this value of $b$. 
\begin{figure}[ht]
\centering
\includegraphics[width=3.5in]{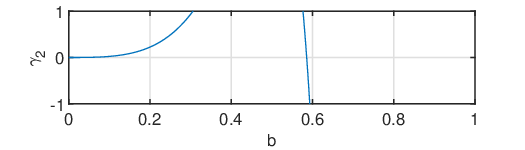}
\caption{$\gamma_2$ as a function of $b\in[0,1]$ for $L=1$, $c=50$ and $a=0$.}
\label{fig:gamma2}
\end{figure}

\medskip
We conclude this discussion with a genericity result for the family \eqref{beta_char}, showing also that the controllability properties are highly sensitive to perturbations of $\beta$.
Let
$$
S=\{(a,b)\,\mid\,0\leq a<b\leq L\},
\qquad
\hat S=\{(a,b)\in S\,\mid\,\gamma_n(a,b)\neq0\ \text{for all }n\geq1\}.
$$

\begin{lemma}\label{lem:genericity}
The set $\hat S$ is dense in $S$ and has full Lebesgue measure in $S$.
\end{lemma}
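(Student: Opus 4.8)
The plan is to treat each coefficient $\gamma_n$ as a real-analytic function of the two parameters $(a,b)$ and to show that its zero set is negligible, both in measure and topologically. As recorded right after \eqref{gamma:def:remark}, for every fixed $n$ the map $(a,b)\mapsto\gamma_n(a,b)$ is analytic; in fact the closed-form expression \eqref{gamma:def:remark} (together with its companion in the exceptional case $\lambda_{1,n}=0$, which occurs for at most one index) is entire in $(a,b)$, so analyticity holds on all of $S$ and even across its boundary. The first and only genuinely model-dependent step is to verify that $\gamma_n\not\equiv0$ on $S$ for each $n$. This is furnished directly by Remark~\ref{rem:beta_constant}: evaluating at the corner $(a,b)=(0,L)\in S$ (which corresponds to $\beta\equiv\beta_0$) gives $\gamma_n\neq0$ for all $n$. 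Hence, for each $n$, the set $Z_n:=\{(a,b)\in S\mid\gamma_n(a,b)=0\}$ is the zero locus of a nontrivial real-analytic function on the connected set $S$.

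Next I would invoke the standard structure of such zero loci: a real-analytic function $f\not\equiv0$ on a connected open subset of $\mathbb{R}^2$ has a zero set that is closed, nowhere dense (by the identity theorem, a zero set with nonempty interior would force $f\equiv0$), and of Lebesgue measure zero. For completeness I would justify the measure claim by a slicing argument: for fixed $a$, the one-variable analytic function $b\mapsto\gamma_n(a,b)$ either vanishes identically or has only isolated zeros; the set of $a$ for which it vanishes identically is itself discrete (otherwise, by analyticity in $a$ for each fixed $b$, one would get $\gamma_n\equiv0$), hence of measure zero, and Fubini then yields $|Z_n|=0$.

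Finally I would assemble the conclusion. Since $\hat S=S\setminus\bigcup_{n\geq1}Z_n$ and a countable union of Lebesgue-null sets is null, the complement $S\setminus\hat S=\bigcup_n Z_n$ is null, so $\hat S$ has full measure in $S$. Density is then automatic: every nonempty relatively open subset of $S$ has positive planar measure --- indeed any point $(a_0,b_0)\in S$ satisfies the open constraint $a_0<b_0$, so it has a relative neighborhood of positive measure --- and therefore cannot be contained in the null set $S\setminus\hat S$; thus $\hat S$ meets every such subset and is dense. (Equivalently, each $Z_n$ being closed and nowhere dense, $\hat S$ is a dense $G_\delta$ by the Baire category theorem.) The main obstacle is conceptual rather than computational: it is the passage from ``$\gamma_n\neq0$ at one point'' to ``$\gamma_n\neq0$ off a null set'', which rests entirely on the analyticity of $\gamma_n$ in $(a,b)$; once Remark~\ref{rem:beta_constant} provides a single nonvanishing point for each $n$, the remainder is routine real-analytic measure theory.
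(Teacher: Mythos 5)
Your proposal is correct and follows essentially the same route as the paper's proof: analyticity of $(a,b)\mapsto\gamma_n(a,b)$, non-vanishing at $(0,L)$ via Remark~\ref{rem:beta_constant}, negligibility (in measure and category) of each zero set $Z_n$, and a countable-union argument with Baire/full-measure yielding density. Your Fubini slicing argument and the measure-theoretic derivation of density are just more detailed justifications of steps the paper states as standard facts.
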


\begin{proof}
Fix $n\geq1$ and define $Z_n=\{(a,b)\in S\,|\,\gamma_n(a,b)=0\}$.
We have seen that the map $(a,b)\mapsto\gamma_n(a,b)$ is analytic.
It is not identically zero (e.g., for $(a,b)=(0,L)$ we have $\gamma_n\neq0$ by Remark~\ref{rem:beta_constant}), hence $Z_n$ has empty interior.
Moreover, $Z_n$ has zero Lebesgue measure (zero set of a nontrivial analytic function in dimension two).
Therefore $Z=\cup_{n\geq1}Z_n$ has zero measure.
Since $S\setminus Z_n$ is open and dense for each $n$, the Baire theorem implies that $S\setminus Z=\cap_{n\geq1}(S\setminus Z_n)$ is dense.
But $S\setminus Z=\hat S$, which concludes.
\end{proof}

\begin{remark}\label{rem_nonrobust}
Assume $T>2L$.
By Theorem~\ref{thm_main}, exact controllability in time $T$ in the space $V$ holds if and only if $(a,b)\in\hat S$.
In particular, exact controllability is generic within the parameter set $S$.
However, the set $\hat S$ does not need to be open, and therefore controllability may fail to be robust with respect to perturbations of $(a,b)$.

Moreover, even when $(a,b)\in\hat S$ (so that $\gamma_n(a,b)\neq0$ for all $n$), one should not expect any uniform lower bound on $|\gamma_n(a,b)|$ with respect to perturbations of $(a,b)$.
Indeed, for each fixed $n$, the map $(a,b)\mapsto \gamma_n(a,b)$ is analytic on $S$
and is not identically zero, so its zero set $Z_n$ (when nonempty) has empty interior.
Moreover, since $\gamma_n(a,b)\to 0$ as $b\downarrow a$, one can find parameters $(a,b)$
arbitrarily close to the diagonal $\{a=b\}$ for which $|\gamma_n(a,b)|$ is arbitrarily small.
Because $\hat S$ is dense in $S$ (Lemma~\ref{lem:genericity}), these small values can be achieved
while still keeping $(a,b)\in\hat S$.
This produces arbitrarily large weights in the spectral definitions of $V$ and $V_0$
(and hence potentially very large control costs), providing a quantitative interpretation of the lack of robustness.
\end{remark}

\section{Controllability of a heat--wave cascade}\label{sec:heat_wave}
We now turn to the reversed cascade \eqref{eq:HW}, for $t>0$, $x\in(0,L)$ and $u\in L^2(0,T)$.
The objective is to compare the controllability mechanisms with those of the wave--heat cascade \eqref{eq:WH} studied in Section~\ref{sec:wave_heat}, while avoiding unnecessary repetitions.

\subsection{Abstract formulation}
Setting $\mathcal{X}=(y,z,\partial_t z)^\top$ and working in the same pivot space $\mathcal{H}$ as in Section~\ref{sec:wave_heat}, the control system \eqref{eq:HW} can be written as
$\dot{\mathcal{X}}(t)=\mathcal{A}_{\mathrm{HW}} \mathcal{X}(t)+\mathcal{B}_{\mathrm{HW}} u(t)$, where $\mathcal{A}_{\mathrm{HW}}:D(\mathcal{A}_{\mathrm{HW}})\subset\mathcal{H}\rightarrow\mathcal{H}$ is defined by
$$
\mathcal{A}_{\mathrm{HW}} = \begin{pmatrix}
\partial_{xx}+c\,\mathrm{id} & 0 & 0\\
0 & 0 & \mathrm{id}\\
\beta\,\mathrm{id} & \partial_{xx} & 0
\end{pmatrix}
$$
with the same domain $D(\mathcal{A}_{\mathrm{HW}})=D(\mathcal{A})$ as the operator $\mathcal{A}$ studied in Section~\ref{sec:wave_heat}, and the (unbounded) control operator $\mathcal{B}_{\mathrm{HW}}$ associated with the Dirichlet boundary control on the heat equation can be defined by transposition, as in Appendix~\ref{sec_app_A}. In the adjoint system, the corresponding observation is the right heat flux:
\begin{equation}\label{eq:BHWstar}
\mathcal{B}_{\mathrm{HW}}^*(\psi_1,\psi_2,\psi_3)^\top=-\partial_x\psi_1(L),
\qquad \forall (\psi_1,\psi_2,\psi_3)^\top\in D(\mathcal{A}_{\mathrm{HW}}^*).
\end{equation}

\subsection{Spectral coefficients and controllability spaces}
The operator $\mathcal{A}_{\mathrm{HW}}$ is again Riesz-spectral.
Its eigenvalues are the same as $\mathcal{A}$, namely, $\lambda_{1,n}=c-\tfrac{n^2\pi^2}{L^2}$ for $n\in\mathbb{N}^*$ and $\lambda_{2,m}= i\,\tfrac{(2m+1)\pi}{2L}$ for $m\in\mathbb{Z}$.
Let $\{\phi_{1,n}\}_{n\geq1}\cup\{\phi_{2,m}\}_{m\in\mathbb{Z}}$ be the corresponding (biorthogonal) Riesz basis of eigenvectors of $\mathcal{A}_{\mathrm{HW}}$, and let $\{\psi_{1,n}\}_{n\geq1}\cup\{\psi_{2,m}\}_{m\in\mathbb{Z}}$ be the dual basis of eigenvectors of $\mathcal{A}_{\mathrm{HW}}^*$. They are different from the eigenvectors of $\mathcal{A}$ and $\mathcal{A}^*$. In particular, the dual Riesz basis is given by $\psi_{1,n} = ( \psi_{1,n}^1, 0 , 0 )$ with
\begin{equation*}
\psi_{1,n}^1(x) = \sqrt{\frac{2}{L}} \sin\left( \frac{n\pi}{L} x \right)
\end{equation*}
and $\psi_{2,m} = ( \psi_{2,m}^1 , \psi_{2,m}^2 , \psi_{2,m}^3 )$ with
\begin{align*}
\psi_{2,m}^1(x) &= \tfrac{1}{\sqrt{L}\,\overline{r_m}\sinh(\overline{r_m}L)}\Big(\sinh(\overline{r_m}(L-x))\int_0^x \beta(s)\sinh(\lambda_{2,m}s)\sinh(\overline{r_m}s)\,ds \\
&\qquad\qquad\qquad\qquad\quad +\sinh(\overline{r_m}x)\int_x^L \beta(s)\sinh(\lambda_{2,m}s)\sinh(\overline{r_m}(L-s))\,ds\Big), \\
\psi_{2,m}^2(x) &= \tfrac{1}{\lambda_{2,m}\sqrt{L}}\sinh(\lambda_{2,m}x),\\
\psi_{2,m}^3(x) &= \tfrac{1}{\sqrt{L}}\sinh(\lambda_{2,m}x),
\end{align*}
where $\overline{r_m}^2 = \overline{\lambda_{2,m}}-c$ with $\mathrm{Re}(r_m)>0$. It follows, in particular, that,
\begin{equation}\label{eq:HW_obs_heat}
\mathcal{B}_{\mathrm{HW}}^*\psi_{1,n}=-\partial_x\psi_{1,n}^1(L)=(-1)^{n+1}\sqrt{\tfrac{2}{L}}\tfrac{n\pi}{L}\neq 0
\qquad \forall n\geq1,
\end{equation}
where $\psi_{1,n}=(\psi_{1,n}^1,\psi_{1,n}^2,\psi_{1,n}^3)^\top$, showing that any parabolic mode is controllable (as expected, since the heat equation \eqref{eq:HW_y} with Dirichlet right-boundary control is exactly null controllable); for the hyperbolic modes the coupling through $\beta$ enters the observation via the coefficients
\begin{equation}\label{def_Gamma_m}
\Gamma_m=\int_0^L\beta(s)\,\sinh(\lambda_{2,m}s)\,\sinh(\overline{r_m}\,s)\,\mathrm{d}s,
\end{equation}
where $r_m$ is a square root of $\overline{\lambda_{2,m}}-c$ with nonnegative real part,
and one has 
\begin{equation}\label{eq:HW_obs_wave}
\mathcal{B}_{\mathrm{HW}}^*\psi_{2,m}=-\partial_x\psi_{2,m}^1(L)=\tfrac{\Gamma_m}{\sqrt{L}\,\sinh(\overline{r_m}L)}.
\end{equation}
In particular, a necessary condition for approximate controllability is
\begin{equation}\label{eq:Gamma_nonzero}
\Gamma_m\neq0\qquad \forall m\in\mathbb{Z}.
\end{equation}
This condition \eqref{eq:Gamma_nonzero} plays for the heat--wave cascade \eqref{eq:HW} the symmetric role as condition \eqref{assumption_gamma_n} does for the wave--heat cascade \eqref{eq:WH}.

Noting that $|\sinh(\overline{r_m}L)|^2 \sim \frac{1}{4}\exp\!\big(\sqrt{2|m|\pi L}\big)$ as $|m|\to\infty$,
arguing as in Section~\ref{sec:controllability_WHT} and applying Theorem~\ref{thm_IM}, the observation coefficients \eqref{eq:HW_obs_wave} suggest to define the observation space
\begin{multline*}
V_{\mathrm{HW}}' = \bigg\{ \sum_{n\geq1} a_n\psi_{1,n}+\sum_{m\in\mathbb{Z}} b_m\psi_{2,m}\quad \Big|\quad a_n\in\mathbb{C},\ b_m\in\mathbb{C} , \\
\sum_{n\geq1} n^2 e^{-\sigma n^2} \, |a_n|^2 + \sum_{m\in\mathbb{Z}}|\Gamma_m|^2 e^{-\sqrt{2|m|\pi L}}\,|b_m|^2<\infty \bigg\}, 
\end{multline*}
where $\sigma =\frac{2\pi^2}{L^2}T$, endowed with the obvious norm.
The dual $V_{\mathrm{HW}}$ of $V_{\mathrm{HW}}'$ with respect to the pivot space $\mathcal{H}$ is 
\begin{multline*}
V_{\mathrm{HW}} = \bigg\{ \sum_{n\geq1} a_n\phi_{1,n}+\sum_{m\in\mathbb{Z}} b_m\phi_{2,m}\quad \Big|\quad a_n\in\mathbb{C},\ b_m\in\mathbb{C},\\
\sum_{n\geq1}\frac{e^{\sigma n^2}}{n^2} \, |a_n|^2
+\sum_{m\in\mathbb{Z}}\frac{e^{\sqrt{2|m|\pi L}}}{|\Gamma_m|^2}\,|b_m|^2<\infty \bigg\}.
\end{multline*}
The Hilbert spaces $V_{0,\mathrm{HW}}$ and $V_{0,\mathrm{HW}}'$ are defined similarly as $V_{\mathrm{HW}}$ and $V_{\mathrm{HW}}'$ but with $\sigma=0$ (i.e., no exponential parabolic weight), so the parabolic component of $V_{0,\mathrm{HW}}$ is comparable to $H^{-1}(0,L)$, as expected because $H^{-1}(0,L)$ is the exact null-controllability space for the boundary-controlled heat equation (see \cite{trelat_SB,tucsnakweiss}).
\subsection{Main result on the heat-wave cascade}

\begin{theorem}\label{thm:HW}
Assume $T>2L$ and \eqref{eq:Gamma_nonzero}. Then:
\begin{enumerate}
\item \label{thm_HW_obs} 
There exists $C_T>0$ such that 
$\int_0^T\big|\mathcal{B}_{\mathrm{HW}}^*\mathcal{X}(t)\big|^2\,dt\geq C_T\|\mathcal{X}_0\|_{V_{\mathrm{HW}}'}^2$
for every solution of $\dot{\mathcal{X}}(t)=\mathcal{A}_{\mathrm{HW}}^*\mathcal{X}(t)$. 
%
\item \label{thm_HW_finitetimeobs} 
There exists $C_T^0>0$ such that 
$\int_0^T\big|\mathcal{B}_{\mathrm{HW}}^*\mathcal{X}(t)\big|^2\,dt\geq C_T^0\|\mathcal{X}(T)\|_{\mathcal{H}}^2$
for every solution of $\dot{\mathcal{X}}(t)=\mathcal{A}_{\mathrm{HW}}^*\mathcal{X}(t)$.
%
\item \label{thm_HW_exact} 
The system \eqref{eq:HW} is exactly controllable in time $T$ in the space $V_{\mathrm{HW}}$.
\item \label{thm_HW_null} 
The system \eqref{eq:HW} is exactly null controllable in time $T$ in the space $V_{0,\mathrm{HW}}$.
\item \label{thm_HW_approx} 
The system \eqref{eq:HW} is approximately controllable in time $T$ in $\mathcal{H}$
(and in any Hilbert space continuously and densely embedded in $\mathcal{H}$).
\end{enumerate}
Moreover:
\begin{itemize}
\item If $T<2L$, then \eqref{eq:HW} is not approximately controllable in time $T$ in $\mathcal{H}$
(and in any Hilbert space continuously and densely embedded in $\mathcal{H}$).
\item If \eqref{eq:Gamma_nonzero} fails (i.e., $\Gamma_{m_0}=0$ for some $m_0$), then \eqref{eq:HW} is not approximately controllable in any time $T>0$ in any Hilbert space continuously and densely embedded in $\mathcal{H}$.
\end{itemize}
\end{theorem}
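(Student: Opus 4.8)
The plan is to mirror the proof of Theorem~\ref{thm_main}, exchanging the roles of the two components. I would first use that $\mathcal{A}_{\mathrm{HW}}$ is Riesz-spectral to expand any solution of the adjoint system $\dot{\mathcal{X}}=\mathcal{A}_{\mathrm{HW}}^*\mathcal{X}$ as $\mathcal{X}(t)=\sum_{n\geq1}a_n e^{\lambda_{1,n}t}\psi_{1,n}+\sum_{m\in\mathbb{Z}}b_m e^{\overline{\lambda_{2,m}}t}\psi_{2,m}$, and then evaluate the observation through \eqref{eq:HW_obs_heat}--\eqref{eq:HW_obs_wave}, so that $\mathcal{B}_{\mathrm{HW}}^*\mathcal{X}(t)=\sum_n a_n e^{\lambda_{1,n}t}\,\mathcal{B}_{\mathrm{HW}}^*\psi_{1,n}+\sum_m b_m e^{\overline{\lambda_{2,m}}t}\,\mathcal{B}_{\mathrm{HW}}^*\psi_{2,m}$ becomes a generalized Dirichlet series over the combined exponent set $(\lambda_{1,n})_n\cup(\overline{\lambda_{2,m}})_m$. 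Since this set consists of a real parabolic branch tending to $-\infty$ and a purely imaginary hyperbolic branch with constant gap $\pi/L$, the Ingham--M\"untz inequality (Theorem~\ref{thm_IM}) applies for $T>2L$ and yields the diagonal lower bound in which every modal term is weighted by the square modulus of its observation coefficient.

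The decisive step is then the asymptotics of those coefficients, and this is exactly where \eqref{eq:HW} differs from \eqref{eq:WH}. On the parabolic branch the observation is explicit, $|\mathcal{B}_{\mathrm{HW}}^*\psi_{1,n}|^2=\tfrac{2}{L}\tfrac{n^2\pi^2}{L^2}$, so $e^{2\lambda_{1,n}T}|\mathcal{B}_{\mathrm{HW}}^*\psi_{1,n}|^2$ is comparable to $n^2 e^{-\sigma n^2}$ with $\sigma=\tfrac{2\pi^2}{L^2}T$; this is the usual heat scale and needs no coupling hypothesis. On the hyperbolic branch I would insert into \eqref{eq:HW_obs_wave} the asymptotics $|\sinh(\overline{r_m}L)|^2\sim\tfrac14 e^{\sqrt{2|m|\pi L}}$ -- which follows from $r_m^2=\overline{\lambda_{2,m}}-c$ and $\mathrm{Re}(r_m)\sim\sqrt{|m|\pi/(2L)}$ -- to obtain $|\mathcal{B}_{\mathrm{HW}}^*\psi_{2,m}|^2\sim \tfrac{4}{L}|\Gamma_m|^2 e^{-\sqrt{2|m|\pi L}}$. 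Reading off these two weights reproduces exactly the $V_{\mathrm{HW}}'$-norm and gives Item~\ref{thm_HW_obs}; Item~\ref{thm_HW_finitetimeobs} is identical but with the $\mathcal{H}$-norm kept on the right. Items~\ref{thm_HW_exact}--\ref{thm_HW_null} then follow from the HUM duality between observability in $V_{\mathrm{HW}}'$ (resp.\ finite-time observability) and controllability in the dual $V_{\mathrm{HW}}$ (resp.\ null controllability in $V_{0,\mathrm{HW}}$), and Item~\ref{thm_HW_approx} by density. The point worth stressing is that the loss of regularity now sits in the hyperbolic coefficients, through $|\Gamma_m|$ and the $\sqrt{|m|}$-exponential factor, whereas the parabolic coefficients remain in the standard heat scale -- the mirror image of \eqref{eq:WH}.

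For the negative statements, the case $\Gamma_{m_0}=0$ is the exact analogue of Lemma~\ref{lem_necessary}: by \eqref{eq:HW_obs_wave} the eigenvector $\psi_{2,m_0}\in\mathcal{H}$ of $\mathcal{A}_{\mathrm{HW}}^*$ then satisfies $\mathcal{B}_{\mathrm{HW}}^*\psi_{2,m_0}=0$, hence $\mathcal{B}_{\mathrm{HW}}^*S^*(t)\psi_{2,m_0}\equiv0$, which is an unobservable direction and kills approximate controllability for every $T>0$ and in every space densely embedded in $\mathcal{H}$. For $T<2L$ I would argue through the hyperbolic branch: the exponents $\overline{\lambda_{2,m}}=-i\tfrac{(2m+1)\pi}{2L}$ have constant gap $\pi/L$, so after the half-integer frequency shift the family $\{e^{\overline{\lambda_{2,m}}t}\}_{m\in\mathbb{Z}}$ is an orthogonal basis of $L^2(0,2L)$ and its critical length is exactly $2L$; on a subcritical interval $(0,T)$ it ceases to be $\ell^2$-independent, which removes the hyperbolic observability lower bound and, through the duality characterization, should yield the claimed failure of approximate controllability.

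The hard part is twofold. First, the hyperbolic coefficient asymptotics must be made uniform in $m$: one has to track the branch $r_m$ of $\sqrt{\overline{\lambda_{2,m}}-c}$ with $\mathrm{Re}(r_m)>0$ and control the error in $|\sinh(\overline{r_m}L)|^2\sim\tfrac14 e^{\sqrt{2|m|\pi L}}$ so that it does not interfere with the Ingham--M\"untz constants; unlike in \eqref{eq:WH}, the hyperbolic observation coefficients here \emph{decay}, so one must verify that Theorem~\ref{thm_IM} still delivers the diagonal bound with these small weights. Second, and more delicate, is upgrading the $T<2L$ statement from mere failure of the lower bound to a genuine nonzero finite-energy obstruction in $\mathcal{H}$: because the hyperbolic observation weight is exponentially small in $\sqrt{|m|}$, a null combination of the subcritical exponential family has to be chosen with matching sub-exponential (Gevrey-type) decay in order to produce an actual element of $\mathcal{H}$ annihilating the observation. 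Making precise this interplay between the wave critical length $2L$ and the parabolic smoothing is the main technical obstacle of the proof.
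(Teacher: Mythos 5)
Your proposal follows essentially the same route as the paper, whose own proof of Theorem~\ref{thm:HW} is deliberately minimal: it consists of three sentences deferring to the proof of Theorem~\ref{thm_main} ``with the roles of the parabolic and hyperbolic families exchanged.'' Everything you write for Items~\ref{thm_HW_obs}--\ref{thm_HW_approx} and for the $\Gamma_{m_0}=0$ bullet is exactly what that sketch intends: Riesz expansion of the adjoint solution, Ingham--M\"untz over the combined spectrum with gap $\pi/L$, the explicit parabolic weights $n^2e^{-\sigma n^2}$, the hyperbolic weights $|\Gamma_m|^2e^{-\sqrt{2|m|\pi L}}$ coming from $|\sinh(\overline{r_m}L)|^2\sim\frac14 e^{\sqrt{2|m|\pi L}}$, HUM duality, density, and the unobservable-eigenvector argument mirroring Lemma~\ref{lem_necessary}. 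One of your two ``hard parts'' is actually a non-issue: Theorem~\ref{thm_IM} is applied to the effective coefficients $\tilde a_n=a_n\mathcal{B}_{\mathrm{HW}}^*\psi_{1,n}$, $\tilde b_m=b_m\mathcal{B}_{\mathrm{HW}}^*\psi_{2,m}$ (first for finite combinations, then by density), and the smallness of the hyperbolic observation coefficients is irrelevant to the validity of the inequality --- it only shrinks the space $V_{\mathrm{HW}}'$ that the right-hand side defines, which is precisely how the weights are read off.

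The place where you genuinely depart from the paper --- and rightly so --- is the $T<2L$ bullet. The paper's justification (``the time threshold is again imposed by the wave component'') implicitly transplants the Theorem~\ref{thm_main} argument, where the wave subsystem is \emph{autonomous} and one concludes by projection. In \eqref{eq:HW} the forward wave subsystem is driven by the coupling $\beta y$, so that projection argument does not transfer; it is in the \emph{adjoint} system that the wave part becomes autonomous and feeds the observed heat flux, and disproving approximate controllability then amounts to exhibiting a nonzero adjoint datum annihilating the observation on $(0,T)$. Your diagnosis is exactly right: failure of the observability lower bound is not enough, and one needs a nontrivial null combination $\sum_m c_m e^{\overline{\lambda_{2,m}}t}=0$ on $(0,T)$ whose coefficients decay fast enough (sub-exponentially, of Gevrey type) to dominate the small observation weights $\Gamma_m/\sinh(\overline{r_m}L)$, so that $b_m=c_m/\mathcal{B}_{\mathrm{HW}}^*\psi_{2,m}$ is square-summable. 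This gap can be closed: take $g\neq0$ in a Gevrey class of order $>2$, compactly supported in $(T,2L)$, and expand it in the orthogonal basis $\{e^{i(2m+1)\pi t/(2L)}\}_{m\in\mathbb{Z}}$ of $L^2(0,2L)$; its coefficients decay like $e^{-c|m|^{\theta}}$ with $\theta>1/2$, which beats the $e^{-\mathrm{Cst}\sqrt{|m|}}$ scale of the observation coefficients (at least under the estimates of Remark~\ref{rem_Gamma_m}). You flag this construction as the main obstacle without carrying it out; note that the paper does not carry it out either, so on this point your proposal is, if anything, more careful than the paper's own proof.
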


\begin{proof}
The proof follows the same lines as that of Theorem~\ref{thm_main}, with the roles of the parabolic and hyperbolic families exchanged in the observation.
The condition \eqref{eq:Gamma_nonzero} is the analogue of \eqref{assumption_gamma_n} and guarantees that each hyperbolic mode is effectively ``seen'' by the heat boundary flux through the internal coupling.
The time threshold $T>2L$ is again imposed by the wave component.
\end{proof}

\begin{remark}\label{rem_Gamma_m}
Since $|\sinh(\lambda_{2,m}s)|\leq 1$ and $|\sinh(\overline{r_m}s)|\leq \cosh(\mathrm{Re}(r_m)s)$, and noting that $\mathrm{Re}(r_m)\sim\sqrt{\frac{|m|\pi}{2L}}$ as $|m|\to\infty$, we obtain
\begin{equation}\label{estim_Gamma_m}
|\Gamma_m|^2 e^{-\sqrt{2|m|\pi L}} \leq \frac{\mathrm{Cst}}{|m|+1} .
\end{equation}
Consequently, the hyperbolic weights in $V_{\mathrm{HW}}'$ are at most of polynomial order, whereas the weights $\frac{e^{\sqrt{2|m|\pi L}}}{|\Gamma_m|^2}$ in $V_{\mathrm{HW}}$ grow at least like $|m|$ (and may be much larger if $|\Gamma_m|$ is small for some modes).
This behavior contrasts with the wave--heat cascade (Remark~\ref{rem:spaceV}), where coupling-dependent weights act on the parabolic modes and may oscillate between distinct exponential scales.

Actually, if $\beta\equiv 0$ on $(b,L]$ for some $b<L$ (i.e., the coupling is supported away from the boundary $x=L$), then \eqref{estim_Gamma_m} can be improved, by noting that
$|\Gamma_m|
\leq  \|\beta\|_{L^\infty}\int_0^b \cosh(\mathrm{Re}(r_m)s)\,ds
\leq  \mathrm{Cst}\,\frac{e^{\mathrm{Re}(r_m)b}}{\mathrm{Re}(r_m)}$.
Hence, setting $\kappa=\sqrt{\frac{2\pi}{L}}(L-b)$,
\begin{equation*}
|\Gamma_m|^2 \, e^{-\sqrt{2|m|\pi L}} \leq \frac{\mathrm{Cst}}{|m|+1}e^{-\kappa\sqrt{|m|}}
\qquad \forall m\in\mathbb{Z}.
\end{equation*}

It is proved in Appendix \ref{app_Gamma_m} that, if $\beta\in W^{k+1,\infty}(0,L)$ for some $k\in\mathbb{N}$ and if $\beta^{(j)}(L)=0$ for $j=0,\dots,k-1$ and $\beta^{(k)}(L)\neq0$, then there exist $C_1,C_2>0$ such that, for every $m\in\mathbb{Z}$,
\begin{equation}\label{two_sided_Gamma_m}
\frac{C_1}{|m|^{p_k}+1} \leq |\Gamma_m|^2\, e^{-\sqrt{2|m|\pi L}}  \leq  \frac{C_2}{|m|^{p_k}+1} 
\quad
\textrm{with}\ \ 
p_k = \begin{cases}
2k+3 & \text{if }k\text{ is even},\\
2k+2 & \text{if }k\text{ is odd},
\end{cases}
\end{equation}
and therefore the hyperbolic weights in $V_{\mathrm{HW}}$ are equivalent, up to constants, to $(|m|^{p_k}+1)$ in the wave eigenbasis, implying that the wave component of $V_{\mathrm{HW}}$ is $H^{1+p_k/2}(0,L)\times H^{p_k/2}(0,L)$ (up to norm equivalence), with Dirichlet boundary condition at $0$ and Neumann boundary condition at $L$. 
%
\end{remark}

\subsection{Comparison with the wave--heat cascade}

The two cascades share the same hyperbolic minimal time $T>2L$ but differ in the way the coupling affects the controllability space.

\begin{itemize}
\item In the wave--heat cascade \eqref{eq:WH}, the observation coefficient on the parabolic modes involves the quantities $\gamma_n$ defined in \eqref{def_gamma_n}, which combine the internal coupling $\beta$ with the exponentially growing factor $\sinh(\lambda_{1,n}\cdot)$.
This may generate very irregular weights in the controllability space $V$ and leads to generic but potentially non-robust controllability.
\item In the heat--wave cascade \eqref{eq:HW}, the parabolic observation weights \eqref{eq:HW_obs_heat} are explicit and exponential in $n$, while the coupling enters through the hyperbolic weights (which depend on $\Gamma_m$).
By Remark \ref{rem_Gamma_m}, we always have the coarse estimate $\vert\Gamma_m\vert^{-2}e^{-\sqrt{2\vert m\vert\pi L}} \geq\mathrm{Cst}\,(|m|+1)$. Under mild regularity assumptions on $\beta$, we have a two-sided polynomial estimate: for instance if $\beta\in W^{1,\infty}(0,L)$ and $\beta(L)\neq 0$ then $\vert\Gamma_m\vert^{-2}e^{-\sqrt{2\vert m\vert\pi L}} \asymp \mathrm{Cst}\,(|m|^3+1)$ and the wave component of $V_{\mathrm{HW}}$ is comparable to $H^{\frac{5}{2}}(0,L)\times H^{\frac{3}{2}}(0,L)$.

In contrast, if $\beta$ is supported away from $x=L$, then $\Gamma_m$ is much smaller and the weights become very small, reflecting large control costs.
\end{itemize}

\medskip
Besides, the heat-wave cascade \eqref{eq:HW} is proved controllable for $T>2L$ by an Ingham-M\"untz argument, but the regime $T<2L$ remains open.
A particularly interesting case is $\beta\equiv1$, for which one may conjecture improved controllability properties in small time, possibly at the price of a controllability space depending on $T$.
A refined description in the spirit of partial observability for the wave equation (see \cite{dehman2025regional}) or a Gramian-based characterization of the reachable set could be relevant.
Transmutation ideas may also provide a useful viewpoint for this question (see, e.g., \cite{ErvedozaZuazua_ARMA2011}).

\section{Conclusion and open problems}\label{sec_conclusion}
We analyzed controllability properties for coupled wave/heat cascades on a one-dimensional interval.
Using a fully spectral approach based on Riesz bases and a combined Ingham--M\"untz inequality, we obtained complete characterizations of exact (and null) controllability in weighted spaces.

For the wave--heat cascade \eqref{eq:WH} (wave controlled at the boundary, heat driven through the internal coupling), exact controllability holds in any time $T>2L$ provided that the modal coefficients $\gamma_n$ defined in \eqref{def_gamma_n} do not vanish.
The corresponding controllability space $V$ is explicitly described in terms of $(\gamma_n)$ and may exhibit highly irregular weights; in particular, controllability is generic but can fail to be robust within natural coupling families.
We also established a structural limitation of the HUM framework: the space $V$ is not invariant along HUM trajectories, even though it is the correct endpoint space for exact controllability.

For the heat--wave cascade \eqref{eq:HW} (heat controlled at the boundary, wave driven through the coupling), we derived an analogous characterization in the space $V_{\mathrm{HW}}$ under the non-vanishing condition \eqref{eq:Gamma_nonzero}.
A key qualitative difference is that reversing the direction of the coupling transfers the coupling-dependent weights from the parabolic modes (through $\gamma_n$) to the hyperbolic modes (through $\Gamma_m$), which typically results in a more classical (Sobolev-type) behavior for the wave component.

\medskip

Several questions remain open and would deserve further investigation.

\medskip
\noindent{\bf Mixed spectral inequalities and sharp spaces.}

\medskip
\noindent\emph{-- The critical time $T=2L$} is not covered by the present Ingham-M\"untz approach.
A sharp description at $T=2L$ would require a refined mixed hyperbolic-parabolic inequality in a critical regime.

\medskip
\noindent\emph{-- Almost optimality of the parabolic weights.}
The parabolic weights in Appendix \ref{sec_IM} are ``almost optimal''.
It is open whether one can combine sharper Lebeau-Robbiano type weights with the hyperbolic gap while preserving a usable observability estimate; any such improvement would enlarge the exact controllability spaces.

\medskip
\noindent\emph{-- Block moment problems and eigenvalue multiplicities.}
Theorem \ref{thm_IM} is stated for sums of simple exponentials.
If some eigenvalues have finite algebraic multiplicity, the moment method naturally leads to exponential polynomials of the form $t^\ell e^{\lambda t}$, and one needs block or vectorial versions of Ingham-type inequalities.
Such extensions are closely related to block moment methods developed to handle spectral condensation phenomena (see for instance \cite{benabdallah2020block}).
Adapting these ideas to mixed hyperbolic-parabolic spectra is a natural perspective.

\medskip
\noindent{\bf Higher dimensions and geometry.}

\medskip
\noindent\emph{-- Beyond 1D and mixed spectral inequalities.}
Our approach instrumentally relies on the Ingham-M\"untz inequality.
In higher dimensions, the wave spectrum is no longer uniformly gapped and one typically replaces Ingham-type estimates by microlocal observability inequalities (geometric control condition) or multiplier arguments.

A challenging open question is to develop mixed estimates that combine parabolic Carleman or Lebeau-Robbiano inequalities with hyperbolic microlocal observability.
From a methodological viewpoint, this suggests combining Carleman estimates and spectral inequalities for parabolic components with semiclassical or microlocal propagation estimates for hyperbolic components, possibly through a frequency splitting argument.

Even in an intermediate setting (a 1D wave equation coupled to a multi-D heat equation), one may hope to derive an ``Ingham-Lebeau-Robbiano'' estimate by interfacing nonharmonic Fourier series techniques with multi-D spectral inequalities.
Whether such a mixed inequality can be made sharp, and whether it yields optimal controllability spaces, remains open.

\medskip
\noindent\emph{-- Coupling geometry and disjoint interaction regions.}
For internally controlled multi-D heat-wave systems, null controllability in $H^{-1}\times H_0^1\times L^2$ is proved in \cite{fernandezcaradeteresa2004} under geometric assumptions and, crucially, an overlap between the control and coupling regions (essentially reducing the wave component to an internally controlled wave equation on $\omega\cap O$).
The case of disjoint regions $\omega\cap O=\emptyset$, or boundary control of the heat component, appears substantially more delicate and remains largely open and should exhibit genuinely new propagation and spectral phenomena.

\medskip
\noindent{\bf Model extensions and design questions.}


\medskip
\noindent\emph{--Time-dependent coefficients.}
Extending Riesz basis techniques to nonautonomous cascades with time-varying couplings $\beta(t,x)$ or variable coefficients
raises delicate issues: time-dependence may destroy the rigid spectral structure underlying nonharmonic Fourier methods.
Identifying classes of coefficients for which a quantitative controllability theory persists is an open direction.

\medskip
\noindent\emph{-- Nonlinear extensions.}
It would be natural to investigate semilinear cascades, where one expects local controllability under suitable smallness and well-posedness assumptions, possibly via fixed point arguments around the linear controllability spaces.
How the nonstandard spectral weights interact with nonlinearities (in particular regarding robustness and cost estimates) is open.

\medskip
\noindent\emph{-- More general coupled systems.}
The present paper focuses on prototypical 1D cascades coupling a hyperbolic and a parabolic equation.
There is a vast literature on indirect controllability and stabilization of coupled systems, for instance for weakly coupled hyperbolic systems and indirect observability methods (see, e.g., \cite{alabau2003twolevel,alabau2011indirect}), and for cascade or more general couplings of parabolic equations (see \cite{ammarkhodja2011survey,boyer} and typical cascade results such as \cite{gonzalezburgos2010cascade}).
These works suggest that understanding sharp controllability spaces and robust cost estimates remains a subtle issue well beyond the wave/heat prototypes, including for thermoelastic models \cite{lebeauzuazua1998thermo} and other mixed couplings.

We mention that, for non-cascade couplings, a possible approach might be to view such systems as bounded perturbations of a cascade configuration and combine mode-by-mode rank conditions (Hautus-type tests) with robustness properties of exact or approximate controllability for admissible control operators (see \cite{DuprezOlive} for such results). 

\medskip
\noindent\emph{-- Choosing the coupling $\beta$ for a better controllability.}
An interesting design question is to understand how the choice of the coupling profile $\beta$ influences the controllability spaces and the control cost.
For instance, in the context of the present paper, one may ask whether there exist optimal choices of $\beta$ maximizing suitable spectral nondegeneracy quantities such as $\inf_{n\geq1}\vert\gamma_n\vert$ or $\inf_{m\in\mathbb{Z}}\vert\Gamma_m\vert$, or minimizing the HUM cost operator
(see \cite{PTZ_AIHPC2013, PTZ_ARMA2015, PTZ_JEMS2016} for related studies).

On the same line, the spectral nondegeneracy assumption $\gamma_n\neq0$ for all $n$ may potentially be relaxed by allowing time-dependent coupling profiles $\beta=\beta(t,\cdot)$, or time-varying coupling supports.
This suggests studying controllability under switching or moving couplings, with the hope that each frequency is sufficiently excited on some time subinterval even if a fixed profile yields cancellations.


\section*{Acknowledgment}
We are indebted to Karine Beauchard, Franck Boyer, Nicolas Burq, Shirshendu Chowdhury, Belhassen Dehman, Sylvain Ervedoza, Philippe Jaming, J\'er\^ome Le Rousseau, Qi L\"u, Lassi Paunonen, Xu Zhang, and Enrique Zuazua for useful discussions, remarks and suggestions.


\appendix

\section{Defining $\mathcal{B}$ by transposition}\label{sec_app_A}
\noindent
At the beginning of Section~\ref{sec:controllability_WHT}, we wrote \eqref{eq:WH} in the abstract form \eqref{eq:abstract_WHT}.
In this appendix, we recall how to define the control operator $\mathcal{B}$ by transposition.
The adjoint operator $\mathcal{A}^*$ is given in Lemma~\ref{lem: adjoint operator A0*}.
Following \cite{trelat_SB,tucsnakweiss}, identifying $\mathcal{H}$ with its dual, we search $\mathcal{B}\in L(\mathbb{C},D(\mathcal{A}^*)')$, or equivalently, $\mathcal{B}^*\in L(D(\mathcal{A}^*),\mathbb{C})$, where $D(\mathcal{A}^*)'$ is the dual of $D(\mathcal{A}^*)$ with respect to the pivot space $\mathcal{H}$.
Note that $D(\mathcal{A}^*) \subset \mathcal{H}\subset D(\mathcal{A}^*)'$ with continuous and dense embeddings.

Since the equality $\dot{\mathcal{X}}(t)=\mathcal{A} \mathcal{X}(t)+\mathcal{B} u(t)$ is written in $D(\mathcal{A}^*)'$, using the duality bracket $\langle\ ,\ \rangle_{D(\mathcal{A}^*)',D(\mathcal{A}^*)}$, we have, for any $\psi\in D(\mathcal{A}^*)$,
\begin{equation*}
\langle\dot{\mathcal{X}}(t),\psi\rangle_{D(\mathcal{A}^*)',D(\mathcal{A}^*)}
= \langle \mathcal{X}(t),\mathcal{A}^*\psi\rangle_{\mathcal{H}} + u(t)\,\mathcal{B}^*\psi .
\end{equation*}
On the other hand, taking $\mathcal{X}(t,\cdot)=(y(t,\cdot),z(t,\cdot),\partial_tz(t,\cdot))^\top$ a sufficiently regular solution of \eqref{eq:WH} and denoting $\psi=(\psi_1,\psi_2,\psi_3)^\top$, integrations by parts yield
$$
\langle\dot{\mathcal{X}}(t),\psi\rangle_{\mathcal{H}} = \langle \mathcal{X}(t),\mathcal{A}^*\psi\rangle_{\mathcal{H}} + u(t)\,\psi_3(L).
$$
A density argument shows that
$$
\mathcal{B}^*\psi = \psi_3(L)\qquad\forall(\psi_1,\psi_2,\psi_3)^\top\in D(\mathcal{A}^*),
$$
which is \eqref{eq:B0star}.

\section{Well-posedness and admissibility property}\label{sec_app_B}
\noindent
We say that \eqref{eq:abstract_WHT} is well-posed in $\mathcal{H}$ if, for all $T>0$ and $u\in L^2(0,T)$, any mild solution with $\mathcal{X}(0)\in\mathcal{H}$ satisfies $\mathcal{X}(t)\in\mathcal{H}$ for all $t\in[0,T]$, i.e., $\mathcal{B}$ is admissible (see \cite{trelat_SB,tucsnakweiss}).
By \cite[Proposition 5.13 and Remark 5.56]{trelat_SB} or \cite[Theorem 4.4.3]{tucsnakweiss}, $\mathcal{B}$ is admissible if and only, for some $T>0$ (and equivalently, for any $T>0$), there exists $K_T>0$ such that 
$\int_0^T \vert \mathcal{B}^*\mathcal{X}(t) \vert^2\, \mathrm{d}t \leq K_T \Vert \mathcal{X}(0)\Vert_{\mathcal{H}}^2$, i.e., 
\begin{equation}\label{adm}
\int_0^T \vert \mathcal{X}^3(t,L)\vert^2\, \mathrm{d}t 
\leq K_T\left( \Vert \mathcal{X}^1(0)\Vert_{L^2}^2 + \Vert\partial_x \mathcal{X}^2(0)\Vert_{L^2}^2 + \Vert \mathcal{X}^3(0)\Vert_{L^2}^2 \right)
\end{equation}
for any solution $t\mapsto \mathcal{X}(t,\cdot)=(\mathcal{X}^1(t,\cdot),\mathcal{X}^2(t,\cdot),\mathcal{X}^3(t,\cdot))^\top$ of the dual system $\dot{\mathcal{X}}(t) = \mathcal{A}^*\mathcal{X}(t)$, i.e., of
\begin{subequations}\label{dual_system}
    \begin{align}
        &\partial_t \mathcal{X}^1 = \partial_{xx} \mathcal{X}^1 + c \mathcal{X}^1 , && \mathcal{X}^1(t,0) = \mathcal{X}^1(t,L) = 0, \label{dual_system_1} \\
        &\partial_t \mathcal{X}^2 = P_\beta \mathcal{X}^1 - \mathcal{X}^3 , && \mathcal{X}^2(t,0) = \partial_x \mathcal{X}^2(t,L) = 0, \label{dual_system_2} \\
        &\partial_t \mathcal{X}^3 = -\partial_{xx} \mathcal{X}^2 , && \mathcal{X}^3(t,0) = 0 . \label{dual_system_3} 
    \end{align}
\end{subequations}
Using \eqref{dual_system_2} and \eqref{dual_system_3}, we have 
$$
\partial_{tt}\mathcal{X}^3=\partial_{xx}\mathcal{X}^3-\beta \mathcal{X}^1,
\qquad 
\mathcal{X}^3(t,0) = \partial_x \mathcal{X}^3(t,L)=0
$$
the latter, because $\partial_t\partial_x \mathcal{X}^2(t,L)=0$) and $\mathcal{X}^2(t,x) = - \int_0^x \int_\tau^L \partial_t \mathcal{X}^3(s,x) \,\mathrm{d}s\,\mathrm{d}\tau$.
Since $\Vert\partial_x \mathcal{X}^2(0)\Vert_{L^2} = \Vert\partial_t \mathcal{X}^3(0)\Vert_{H^{-1}_{(0)}}$, where $H^{-1}_{(0)}(0,L)$ is the dual of $H^1_{(0)}(0,L)$ with respect to the pivot space $L^2(0,L)$, the admissibility inequality \eqref{adm} is equivalent to
\begin{equation}\label{adm1}
\int_0^T \vert \mathcal{X}^3(t,L)\vert^2\, \mathrm{d}t 
\leq K_T\Big( \Vert \mathcal{X}^1(0)\Vert_{L^2}^2 + \Vert \mathcal{X}^3(0)\Vert_{L^2}^2 + \Vert\partial_t \mathcal{X}^3(0)\Vert_{H^{-1}_{(0)}}^2 \Big)
\end{equation}
for any solution of
\begin{subequations}\label{dual_system1}
\begin{align}
        &\partial_t \mathcal{X}^1 = \partial_{xx} \mathcal{X}^1 + c \mathcal{X}^1 ,  \label{dual_system1_1} \\
        &\partial_{tt} \mathcal{X}^3 = \partial_{xx} \mathcal{X}^3 - \beta \mathcal{X}^1 ,  \label{dual_system1_2}     \\
        &\mathcal{X}^1(t,0) = \mathcal{X}^1(t,L) = \mathcal{X}^3(t,0) = \partial_x \mathcal{X}^3(t,L) = 0 . \label{dual_system1_3} 
\end{align}
\end{subequations}

\begin{lemma}\label{lem_adm}
The admissibility inequality \eqref{adm1} holds.
Therefore, the control system \eqref{eq:abstract_WHT} is well-posed in $\mathcal{H}$.
\end{lemma}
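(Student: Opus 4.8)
The plan is to exploit the cascade structure of the dual system \eqref{dual_system1}: the component $\mathcal{X}^1$ solves a decoupled heat equation \eqref{dual_system1_1}, whereas $\mathcal{X}^3$ solves a wave equation \eqref{dual_system1_2} forced by the source $-\beta\mathcal{X}^1$, with the boundary conditions \eqref{dual_system1_3}. First I would prove \eqref{adm1} for smooth solutions (say, initial data in $D((\mathcal{A}^*)^2)$) and then pass to the general case by density, which is legitimate since \eqref{adm1} is a closed inequality. Writing $\mathcal{X}^3=w_{\mathrm{f}}+w_{\mathrm{s}}$, where $w_{\mathrm{f}}$ solves the \emph{free} wave equation (source $\beta\equiv0$) with initial data $(\mathcal{X}^3(0),\partial_t\mathcal{X}^3(0))$ and $w_{\mathrm{s}}$ solves the wave equation with source $-\beta\mathcal{X}^1$ and zero initial data, it suffices to bound the two boundary traces $\int_0^T|w_{\mathrm{f}}(t,L)|^2\,\mathrm{d}t$ and $\int_0^T|w_{\mathrm{s}}(t,L)|^2\,\mathrm{d}t$ separately and then use $|w_{\mathrm{f}}+w_{\mathrm{s}}|^2\leq 2|w_{\mathrm{f}}|^2+2|w_{\mathrm{s}}|^2$.

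For the free part, I would expand $w_{\mathrm{f}}$ in the eigenbasis $e_m(x)=\sqrt{2/L}\,\sin\!\big(\tfrac{(2m+1)\pi}{2L}x\big)$ of $-\partial_{xx}$ with $e_m(0)=e_m'(L)=0$ and frequencies $\omega_m=\tfrac{(2m+1)\pi}{2L}$, so that $w_{\mathrm{f}}(t,x)=\sum_m\big(a_m\cos(\omega_m t)+b_m\sin(\omega_m t)\big)e_m(x)$. Since $|e_m(L)|^2=2/L$ is constant and the frequency set $\{\pm\omega_m\}$ has the uniform gap $\pi/L$, the \emph{upper} Ingham inequality (which holds for every $T>0$, no minimal time being required for the upper bound) gives $\int_0^T|w_{\mathrm{f}}(t,L)|^2\,\mathrm{d}t\leq C_T\sum_m(|a_m|^2+|b_m|^2)$. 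Identifying $\sum_m|a_m|^2=\|\mathcal{X}^3(0)\|_{L^2}^2$ (orthonormality of $e_m$ in $L^2$) and $\sum_m|b_m|^2=\|\partial_t\mathcal{X}^3(0)\|_{H^{-1}_{(0)}}^2$ (the $H^{-1}_{(0)}$ weight $\omega_m^{-2}$ exactly compensating the factor $\omega_m$ carried by $\partial_t w_{\mathrm{f}}(0)=\sum_m\omega_m b_m e_m$) yields the free contribution. This is the classical hidden-regularity/admissibility estimate for the $1$D wave equation, here at the low-regularity level $(L^2,H^{-1}_{(0)})$ of \eqref{adm1}.

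For the source part, I would first use that $\mathcal{X}^1$ solves the decoupled Dirichlet heat equation \eqref{dual_system1_1}, so the semigroup bound $\|\mathcal{X}^1(t)\|_{L^2}\leq e^{ct}\|\mathcal{X}^1(0)\|_{L^2}$ gives $\|\beta\mathcal{X}^1\|_{L^2(0,T;L^2)}\leq\|\beta\|_{L^\infty}e^{|c|T}\sqrt{T}\,\|\mathcal{X}^1(0)\|_{L^2}$. A standard energy estimate for $w_{\mathrm{s}}$ (multiply \eqref{dual_system1_2} by $\partial_t w_{\mathrm{s}}$ and integrate by parts; the boundary terms $[\partial_t w_{\mathrm{s}}\,\partial_x w_{\mathrm{s}}]_0^L$ vanish by the Dirichlet condition at $0$ and the Neumann condition at $L$) then yields $\sup_{t\in[0,T]}\|w_{\mathrm{s}}(t)\|_{H^1}\leq C\,\|\beta\mathcal{X}^1\|_{L^1(0,T;L^2)}\leq C'\|\mathcal{X}^1(0)\|_{L^2}$, using Poincaré since $w_{\mathrm{s}}(t,0)=0$. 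Finally the one-dimensional Sobolev embedding $H^1(0,L)\hookrightarrow C([0,L])$ controls the trace pointwise in time, $|w_{\mathrm{s}}(t,L)|\leq C\|w_{\mathrm{s}}(t)\|_{H^1}$, so that $\int_0^T|w_{\mathrm{s}}(t,L)|^2\,\mathrm{d}t\leq C\,\|\mathcal{X}^1(0)\|_{L^2}^2$, which is exactly one of the terms on the right-hand side of \eqref{adm1}. Combining the two estimates proves \eqref{adm1}, and hence admissibility and well-posedness.

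I expect the only genuinely delicate point to be the interplay between the two regularity mechanisms. The hyperbolic trace must be controlled at the \emph{low} regularity level $(L^2,H^{-1}_{(0)})$, which forces the sharp Ingham/hidden-regularity argument rather than a soft energy estimate; the parabolic source lives at the same level but is tamed by heat smoothing, which promotes $\mathcal{X}^1$ to an $L^2_tL^2_x$ forcing and hence $w_{\mathrm{s}}$ to an $H^1$ (continuous) function with a harmless trace. The main thing to check carefully is therefore that the source contribution does not degrade the low-regularity free estimate — which it does not, precisely because the heat component is effectively one full derivative smoother (in the hyperbolic scale) than the data of $w_{\mathrm{f}}$.
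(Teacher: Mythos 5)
Your proof is correct and follows essentially the same route as the paper: the paper also first treats the case $\mathcal{X}^1=0$ (the free wave with mixed Dirichlet/Neumann conditions) by expanding in the eigenbasis $\sqrt{2/L}\,\sin\bigl(\tfrac{(2m+1)\pi}{2L}x\bigr)$ and invoking Parseval -- which in this one-dimensional, uniformly gapped setting is interchangeable with your upper Ingham inequality -- and then treats $\beta\mathcal{X}^1$ as a source term in the wave equation. Your handling of the source contribution (energy estimate plus the $H^1(0,L)\hookrightarrow C([0,L])$ trace bound) is a slightly more self-contained implementation of what the paper compresses into ``follows by the Duhamel formula,'' but it is the same decomposition and the same key mechanism.
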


\smallskip

\begin{proof}
We first establish \eqref{adm1} with $\mathcal{X}^1=0$. We have, then, a 1D wave equation with Dirichlet condition at the left boundary and Neumann condition at the right boundary. The eigenvalues of the corresponding Laplacian operator are $\lambda_k=\frac{\pi}{2L}+\frac{k\pi}{L}$, for $k\in\mathbb{N}$, with associated eigenfunctions $\sqrt{\tfrac{2}{L}}\sin(\lambda_k x)$. Now, expanding $\mathcal{X}^3(t,x)$ as a spectral Fourier series in this basis of eigenfunctions, and using the Parseval theorem as in \cite[Section 5.2.4.1]{trelat_SB}, \eqref{adm1} follows, with $\mathcal{X}^1=0$.

Now, we treat $\mathcal{X}^1$ as a source term in the wave equation \eqref{dual_system1_2}, and \eqref{adm} follows by the Duhamel formula.
\end{proof}

\begin{remark}\label{rem:max_adm_B0}
Lemma~\ref{lem_adm} is essentially sharp with respect to the hyperbolic component: any Sobolev-type enlargement that weakens the norm on the wave component destroys $L^2$-admissibility, already for the uncoupled wave equation. 

Indeed, take $\beta\equiv0$ (hence $X_1\equiv0$ in \eqref{dual_system1_2}. Setting $\varphi_k(x)=\sqrt{\frac{2}{L}}\sin\big( (k+\tfrac12)\tfrac{\pi}{L}x\big)$ for $k\in\mathbb{N}$, one has $\varphi_k(0)=0$, $\partial_x\varphi_k(L)=0$ and $|\varphi_k(L)|=1$.
The solution $\mathcal{X}_3$ of \eqref{dual_system1_2}-\eqref{dual_system1_3} with initial data $\mathcal{X}_3(0)=\varphi_k$ and $\partial_t\mathcal{X}_3(0)=0$ is $\mathcal{X}_3(t,L)=\varphi_k(L)\cos( (k+\tfrac12)\tfrac{\pi}{L} t)$ and satisfies $\int_0^T |\mathcal{X}_3(t,L)|^2\,dt\geq \frac{T}{4}$ for $k$ sufficiently large.
On the other hand, for any $s>0$, $\|\varphi_k\|_{H^{-s}(0,L)}\sim k^{-s}\|\varphi_k\|_{L^2}\to 0$.
Therefore, no estimate of the form \eqref{adm1} can hold if one replaces the $L^2$-term $\|\mathcal{X}_3(0)\|_{L^2}$ by a strictly weaker Sobolev norm (e.g., $H^{-s}$), and the operator $\mathcal{B}$ fails to be $L^2$-admissible in such enlarged state spaces.
\end{remark}

\section{Ingham--M\"untz inequality}\label{sec_IM}
The key technical ingredient underlying our observability and controllability results is a \emph{combined Ingham--M\"untz inequality} for non-harmonic Fourier series mixing a parabolic family and a hyperbolic family.
The systematic use of such mixed inequalities in the analysis of heat--wave couplings was pioneered in \cite{zhang2004polynomial} (see also \cite{zhang2003polynomial}), and our strategy follows the same spectral viewpoint.
In this section we recall the Ingham--M\"untz inequality used throughout the paper and we discuss the (almost) optimality of the weights that it produces.

\begin{theorem}\label{thm_IM}
Let $(\lambda_{1,n})_{n\geq1}$ be a sequence of distinct complex numbers such that there exists $\alpha>1$ with $(\lambda_{1,n}/n^\alpha)_{n\geq1}$ bounded.
Assume that there exist $n_0\geq1$ and $C_1,C_2>0$ such that for all $n,n'\geq n_0$,
$$
-\mathrm{Re}(\lambda_{1,n}) \geq C_1\, |\mathrm{Im}(\lambda_{1,n})|
\qquad\text{and}\qquad
|\lambda_{1,n}-\lambda_{1,n'}|\geq C_2|n^\alpha-{n'}^\alpha|.
$$
Let $(\lambda_{2,m})_{m\in\mathbb{Z}}$ be another sequence of distinct complex numbers.
Assume that there exist $m_0\ge0$, $\gamma>0$, $z_0\in\mathbb{C}$ and $(\mu_m)_{m\in\mathbb{Z}}\in\ell^2(\mathbb{C})$ such that
$$
\lambda_{2,m} = \gamma\, i\,m + z_0 + \mu_m
\qquad \forall |m|\geq m_0,
$$
and that $\lambda_{1,n}\neq\lambda_{2,m}$ for all $n\geq1$, $m\in\mathbb{Z}$.
Then, for any $T>\frac{2\pi}{\gamma}$, there exists $C_T>0$ such that
\begin{multline}\label{IM}
\int_0^T \! \Big| \sum_{n\geq1} a_n e^{\lambda_{1,n}t} +\! \sum_{m\in\mathbb{Z}} b_m e^{\lambda_{2,m}t} \Big|^2  \mathrm{d}t 
\geq  C_T \bigg( \sum_{n\geq1} |a_n|^2 e^{2\mathrm{Re}(\lambda_{1,n})T} +\! \sum_{m\in\mathbb{Z}} |b_m|^2 \bigg)
\end{multline}
for all $(a_n)_{n\geq1},(b_m)_{m\in\mathbb{Z}}\in\ell^2(\mathbb{C})$.
\end{theorem}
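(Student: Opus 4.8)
The plan is to deduce the lower bound \eqref{IM} from the construction of a biorthogonal family in $L^2(0,T)$, following the mixed spectral strategy of \cite{zhang2004polynomial}. First I would rescale the parabolic exponentials, setting $\hat e_{1,n}(t)=e^{\lambda_{1,n}t}e^{-\mathrm{Re}(\lambda_{1,n})T}$ and $\hat a_n=a_n e^{\mathrm{Re}(\lambda_{1,n})T}$, which turns the weighted right-hand side of \eqref{IM} into the unweighted quantity $\sum_n|\hat a_n|^2+\sum_m|b_m|^2$. Inequality \eqref{IM} is then equivalent to a uniform lower Riesz (Riesz--Fischer) bound for the combined family $\mathcal F=\{\hat e_{1,n}\}_{n\geq1}\cup\{e^{\lambda_{2,m}\cdot}\}_{m\in\mathbb Z}$. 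By the standard duality between lower Riesz bounds and Bessel biorthogonal systems, it suffices to exhibit a family $\{g_{1,n}\}\cup\{g_{2,m}\}\subset L^2(0,T)$ biorthogonal to $\mathcal F$ which is Bessel with a uniform bound $B$: indeed, if $f$ denotes the sum in \eqref{IM}, then $\hat a_n=\langle f,g_{1,n}\rangle$ and $b_m=\langle f,g_{2,m}\rangle$, so the Bessel bound gives $\sum_n|\hat a_n|^2+\sum_m|b_m|^2\leq B\|f\|_{L^2(0,T)}^2$, which is exactly \eqref{IM} with $C_T=1/B$.

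To build the biorthogonal family I would use the generating-function (Fattorini--Russell) method, with a combined entire function $\Phi=\Phi_h\,\Phi_p$. The factor $\Phi_h$ is modelled on the classical Ingham/Beurling generating function for the asymptotically arithmetic hyperbolic sequence $\lambda_{2,m}=\gamma\,i\,m+z_0+\mu_m$: since the asymptotic imaginary gap equals $\gamma$, the condition $T>2\pi/\gamma$ leaves the room needed to keep the exponential type of $\Phi_h$ strictly below $T/2$, while the $\ell^2$ perturbation $(\mu_m)$ and the finitely many exceptional indices $|m|<m_0$ are absorbed by a Kadec/Beurling--type stability argument. The factor $\Phi_p$ is a canonical product over the parabolic zeros $\lambda_{1,n}$; it converges because $\alpha>1$ forces $\sum_n|\lambda_{1,n}|^{-1}<\infty$, and the sector condition $-\mathrm{Re}(\lambda_{1,n})\geq C_1|\mathrm{Im}(\lambda_{1,n})|$ places these zeros deep in a left half-plane. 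For each eigenvalue $\lambda$ one then defines the candidate biorthogonal element as the inverse Paley--Wiener transform of $z\mapsto \Phi(z)/\big((z-\lambda)\Phi'(\lambda)\big)$, which lands in $L^2(0,T)$ by the type bound.

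The norm estimates split along the two families. On the hyperbolic side, the gap $\gamma$, the inequality $T>2\pi/\gamma$ and the $\ell^2$ stability give that the $g_{2,m}$ are uniformly bounded and form a Riesz basis of their span, hence a Bessel system. On the parabolic side, the decisive quantity is $|\Phi'(\lambda_{1,n})|^{-1}$; the gap condition $|\lambda_{1,n}-\lambda_{1,n'}|\geq C_2|n^\alpha-{n'}^\alpha|$ together with the sector condition yields a Müntz--Szász--type estimate producing $\|g_{1,n}\|_{L^2(0,T)}\lesssim \mathrm{poly}(n)\,e^{-\mathrm{Re}(\lambda_{1,n})T}$, so that after rescaling $\|g_{1,n}^{\mathrm{resc}}\|\lesssim\mathrm{poly}(n)$ --- not summable termwise, but in fact the rescaled parabolic family is a Riesz basis of its span (hence Bessel), because the normalized real exponentials $\hat e_{1,n}$ form such a basis under the Müntz condition. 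The exponent $T$ in the weight $e^{2\mathrm{Re}(\lambda_{1,n})T}$ of \eqref{IM} is precisely what $|\Phi'(\lambda_{1,n})|^{-1}$ carries, which is why no time loss beyond $T>2\pi/\gamma$ appears on the parabolic side.

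The main obstacle is the \emph{interaction} between the two spectra: establishing that $\mathcal F$ admits a Bessel biorthogonal system with a uniform constant amounts to showing that the closed spans $S_p$ and $S_h$ of the (normalized) parabolic and hyperbolic exponentials make a positive angle in $L^2(0,T)$, and this cannot be read off from either family alone. Concretely, one must control $\Phi_p$ evaluated along the hyperbolic spectrum and $\Phi_h$ along the parabolic spectrum, i.e.\ prove that $|\Phi_p(\lambda_{2,m})|$ is bounded below and $|\Phi_h(\lambda_{1,n})|$ stays tame, so that dividing by the ``wrong'' factor does not destroy either set of estimates. It is exactly here that the hypotheses are used jointly: the sector condition keeps the parabolic zeros away from the imaginary axis where the hyperbolic modes live, guaranteeing that the two spectra are asymptotically separated, while the assumption $\lambda_{1,n}\neq\lambda_{2,m}$ guarantees minimality. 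Once this uniform separation is quantified, the Bessel bound for $\mathcal F$ follows by summing the two families' contributions, and \eqref{IM} is obtained with $C_T=1/B$ for every $T>2\pi/\gamma$.
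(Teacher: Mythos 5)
The paper itself does not prove Theorem~\ref{thm_IM}: it is recalled from the literature, with the proof attributed to \cite{zhang2003polynomial,zhang2004polynomial} and the generalization \cite{komornik2015ingham}. Your proposal must therefore stand on its own, and it does not, because the decisive step is missing. The reduction of \eqref{IM} to a Bessel bound for a biorthogonal family, and the plan to build that family from a generating function $\Phi=\Phi_h\Phi_p$ via Paley--Wiener, reproduce the standard Fattorini--Russell/moment-method framework. Within that framework, however, the Ingham estimate for the hyperbolic block and the M\"untz estimates for the parabolic block are classical; the entire content of the theorem is the \emph{interaction} estimate, i.e., a quantitative lower bound on $|\Phi_p(\lambda_{2,m})|$ uniform in $m$ and matching control of $|\Phi_h(\lambda_{1,n})|$ (equivalently, a positive angle between the two closed spans), proved without incurring any loss beyond the weights already present in \eqref{IM}. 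You correctly name this as ``the main obstacle'' and then write ``once this uniform separation is quantified, the Bessel bound for $\mathcal{F}$ follows by summing the two families' contributions.'' That sentence is the statement to be proved, not a proof: the sector condition and $\lambda_{1,n}\neq\lambda_{2,m}$ are indeed the hypotheses that make it true, but no quantitative bound is derived from them, and a Bessel bound for the union is not obtained by ``summing the two families' contributions'' --- the off-diagonal (cross) part of the Gram matrix is exactly what is at stake.

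There are also genuine errors in the parabolic-side estimates you do give. (i) The convergence of $\sum_n|\lambda_{1,n}|^{-1}$ does not follow from the boundedness of $(\lambda_{1,n}/n^\alpha)$: that is an upper bound $|\lambda_{1,n}|\lesssim n^\alpha$, the wrong direction, compatible with clustering; it follows instead from the separation hypothesis $|\lambda_{1,n}-\lambda_{1,n'}|\geq C_2|n^\alpha-{n'}^\alpha|$, which yields $|\lambda_{1,n}|\gtrsim n^\alpha$. (ii) The claim that the rescaled parabolic family is ``a Riesz basis of its span (hence Bessel)'' is false: the rescaled exponentials satisfy $\|\hat e_{1,n}\|_{L^2(0,T)}^2\sim e^{2|\mathrm{Re}(\lambda_{1,n})|T}/(2|\mathrm{Re}(\lambda_{1,n})|)$, so no upper frame bound can hold, and even after true normalization a M\"untz family of decaying exponentials is never a Riesz sequence (consecutive normalized elements become asymptotically parallel). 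Only the one-sided Riesz--Fischer inequality can hold --- but that inequality for the parabolic block \emph{is} the M\"untz half of \eqref{IM}, so invoking it as known in this form is circular unless you cite and use the actual Fattorini--Russell/Schwartz estimates. (iii) A Bessel system necessarily has uniformly bounded norms, so your bound $\|g_{1,n}^{\mathrm{resc}}\|\lesssim\mathrm{poly}(n)$ can never yield the Bessel property; what is needed is the sharper M\"untz biorthogonal estimate $\|g_{1,n}\|_{L^2(0,T)}\lesssim_{\epsilon} e^{\epsilon|\mathrm{Re}(\lambda_{1,n})|}$ for every $\epsilon>0$, which after your rescaling gives norms decaying like $e^{-(T-\epsilon)|\mathrm{Re}(\lambda_{1,n})|}$. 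Points (i)--(iii) are repairable from the literature, but the gap identified above --- the joint hyperbolic--parabolic estimate --- is the theorem itself and remains unproved in your proposal.
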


\smallskip

Inequality \eqref{IM} can be interpreted as a superposition of the classical Ingham inequality \cite{Ingham} (purely hyperbolic spectra with a uniform gap) and of M\"untz--Sz\'asz type estimates for parabolic spectra \cite{avdoninivanov}.
In a control-theoretic form close to \eqref{IM}, it was first established in \cite{zhang2003polynomial,zhang2004polynomial} and later generalized in \cite{komornik2015ingham} (see also \cite{bhandari2024boundary}).

For the wave--heat cascade \eqref{eq:WH}, the spectrum is given in Lemma \ref{lem: eigenstructures of A0}, and the assumptions of Theorem~\ref{thm_IM} hold with $\alpha=2$ and $\gamma=\pi/L$ (and $\mu_m\equiv0$), and the time condition $T>2\pi/\gamma$ reduces to $T>2L$.

\begin{remark}[Almost optimality of the weights]\label{rem:IM_optimality}
The weights in \eqref{IM} are decisive, because they determine the size of the exact controllability spaces obtained by the method of moments.

\medskip
\noindent
\emph{(i) Hyperbolic part.}
Taking $a_n\equiv0$ in \eqref{IM} gives the classical Ingham inequality for $(\lambda_{2,m})_{m\in\mathbb{Z}}$.
In this purely hyperbolic setting, the family $\{e^{\lambda_{2,m}t}\}_{m\in\mathbb{Z}}$ forms a Riesz sequence in $L^2(0,T)$ whenever $T>2\pi/\gamma$, and one even has a \emph{reverse} inequality with the same $\ell^2$ weight on the coefficients, up to changing the constant (see \cite{avdoninivanov,Ingham}).
In particular, the unit weight $\sum_{m\in\mathbb{Z}}|b_m|^2$ on the right-hand side of \eqref{IM} is sharp.

\medskip
\noindent
\emph{(ii) Parabolic part.}
Taking $b_m\equiv0$ reduces \eqref{IM} to a M\"untz--Sz\'asz type inequality.
Under the polynomial separation assumptions of Theorem~\ref{thm_IM}, one has $\mathrm{Re}(\lambda_{1,n})\sim -c\, n^\alpha$ for some $c>0$, so the weight $e^{2\mathrm{Re}(\lambda_{1,n})T}$ is essentially of the form $e^{-c n^\alpha T}$ (up to changing constants).
For purely parabolic families, this weight is not sharp: one can typically replace it by a \emph{sub-Gaussian} weight $e^{-\mathrm{Cst}\, n^{\alpha/2}\sqrt{1+T}}$ (equivalently, $e^{-\mathrm{Cst}\,\sqrt{-\mathrm{Re}(\lambda_{1,n})}\sqrt{1+T}}$), see, e.g., \cite[Section~7]{russell1978controllability} and \cite[Section~6]{fernandezcarazuazua}.
Such refinements are closely related to Lebeau--Robbiano spectral inequalities (and to the sharp control cost for the heat equation).

\medskip
\noindent
\emph{(iii) Why \eqref{IM} is ``almost optimal''.}
In the purely parabolic setting, the exponent $1/2$ in $\sqrt{-\mathrm{Re}(\lambda_{1,n})}$ is essentially optimal in general geometries and cannot be replaced by any smaller power $\theta<1/2$ (see \cite{lebeau1995controle}).
Consequently, inequalities of the form \eqref{IM} cannot hold if one replaces $e^{2\mathrm{Re}(\lambda_{1,n})T}$ by a weaker weight $e^{-\mathrm{Cst}\, n^{c\alpha}}$ with $c<1/2$.
At the same time, it is (to our knowledge) an open question whether the \emph{combined} inequality \eqref{IM} remains valid when improving the parabolic weight from $e^{-c n^\alpha T}$ to $e^{-\mathrm{Cst}\, n^{\alpha/2}\sqrt{1+T}}$ while keeping the hyperbolic part.
Any such refinement would slightly enlarge the exact controllability spaces derived in this paper (for $\alpha=2$, it would replace Gaussian-type weights $e^{\nu n^2}$ by analytic-type weights $e^{\mathrm{Cst}\, n}$ in the parabolic component).
\end{remark}

\section{HUM non-invariance: proof of Proposition~\ref{prop:noninv}}\label{sec_app_D}

This appendix provides a detailed proof of Proposition~\ref{prop:noninv}.
We assume throughout that $T>2L$ and that the non-resonance condition \eqref{assumption_gamma_n} holds.

\subsection{Preliminaries}

For $t\in[0,T]$, define the control-to-state map
\begin{equation*}
L_t u=\int_0^t T_0(t-s)\mathcal{B} u(s)\,\mathrm{d}s,
\qquad u\in L^2(0,t).
\end{equation*}
By admissibility of $\mathcal{B}$ (Appendix~\ref{sec_app_B}), $L_t$ is bounded from $L^2(0,t)$ into $\mathcal{H}$.
Moreover, thanks to the observability inequality \eqref{exact_obs}, $L_T^*$ is bounded from $V'$ into $L^2(0,T)$, and therefore $L_T$ is bounded from $L^2(0,T)$ into $V$ (duality).
The HUM (or controllability Gramian) operator is
$G_T=L_T L_T^*:V'\to V$.
Exact observability implies that $G_T$ is an isomorphism between $V'$ and $V$ (see, e.g., \cite{lions,trelat_SB,tucsnakweiss}).

For $t\in(0,T)$, we also define the intermediate HUM map
\begin{equation*}
F_t=L_t L_T^*:V'\to \mathcal{H}.
\end{equation*}
Note that $F_t$ is bounded from $V'$ to $\mathcal{H}$ as a composition of bounded operators.

\subsection{Invariance of $V$ for the free semigroup}

\begin{lemma}\label{lem:V_invariant}
For every $t\ge0$, the semigroup $T_0(t)$ restricts to a bounded operator on $V$.
\end{lemma}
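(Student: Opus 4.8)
The plan is to exploit that $V$ is defined diagonally with respect to the biorthogonal system $(\Phi,\Psi)$ of Lemma~\ref{lem:A0 is Riesz spectral}, and that, by the spectral expansion \eqref{eq: semigroup expansion}, the semigroup $T_0(t)$ acts diagonally on the modal coefficients. Concretely, for $\mathcal{X}\in V$ I would set $a_n=\langle\mathcal{X},\psi_{1,n}\rangle$ and $b_m=\langle\mathcal{X},\psi_{2,m}\rangle$, so that by \eqref{def_norm_V} one has $\|\mathcal{X}\|_V^2=\sum_{n\geq1}\frac{n^4}{\gamma_n^2}e^{\nu n^2}|a_n|^2+\sum_{m\in\mathbb{Z}}|b_m|^2<\infty$. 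The first step is to identify the modal coefficients of $T_0(t)\mathcal{X}$: inserting \eqref{eq: semigroup expansion} and using biorthogonality $\langle\phi_{i,k},\psi_{j,\ell}\rangle=\delta_{ij}\delta_{k\ell}$, I get $\langle T_0(t)\mathcal{X},\psi_{1,n}\rangle=e^{\lambda_{1,n}t}a_n$ and $\langle T_0(t)\mathcal{X},\psi_{2,m}\rangle=e^{\lambda_{2,m}t}b_m$. Thus $T_0(t)$ multiplies the $n$-th parabolic coefficient by $e^{\lambda_{1,n}t}$ and the $m$-th hyperbolic coefficient by $e^{\lambda_{2,m}t}$, while leaving the time-independent weights defining $V$ untouched.

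The second step is to bound these multipliers uniformly in the modal index. Since the parabolic eigenvalues $\lambda_{1,n}=c-\frac{n^2\pi^2}{L^2}$ are real and decreasing in $n$, one has $|e^{\lambda_{1,n}t}|^2=e^{2\lambda_{1,n}t}\leq e^{2\lambda_{1,1}t}$ for every $t\geq0$ and every $n\geq1$; and since the hyperbolic eigenvalues $\lambda_{2,m}=i\frac{(2m+1)\pi}{2L}$ are purely imaginary, $|e^{\lambda_{2,m}t}|=1$. Plugging these into \eqref{def_norm_V} gives
$$
\|T_0(t)\mathcal{X}\|_V^2=\sum_{n\geq1}\frac{n^4}{\gamma_n^2}e^{\nu n^2}e^{2\lambda_{1,n}t}|a_n|^2+\sum_{m\in\mathbb{Z}}|b_m|^2\leq\max\!\big(e^{2\lambda_{1,1}t},1\big)\,\|\mathcal{X}\|_V^2,
$$
so the weighted series defining $T_0(t)\mathcal{X}$ is again summable; hence $T_0(t)\mathcal{X}\in V$ and $\|T_0(t)\|_{\mathcal{L}(V)}\leq\max(e^{\lambda_{1,1}t},1)$.

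There is essentially no obstacle here: the statement is immediate once one recognizes that $V$ is a weighted $\ell^2$-space with \emph{time-independent} modal weights, and that $T_0(t)$ is diagonal in the eigenbasis with multipliers of modulus at most $e^{\lambda_{1,1}t}$ on the parabolic modes and exactly $1$ on the hyperbolic modes. The only point requiring a word is the uniform upper bound $\sup_{n\geq1}e^{\lambda_{1,n}t}=e^{\lambda_{1,1}t}<\infty$, valid precisely because the parabolic spectrum is real and bounded above; this is what makes the exponential gain $e^{\nu n^2}$ in the definition of $V$ compatible with the forward flow (in sharp contrast with the intermediate HUM map, whose unboundedness drives the second, genuinely subtler, part of Proposition~\ref{prop:noninv}). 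The same argument applies verbatim to $V_0$, only the value of $\nu$ being changed.
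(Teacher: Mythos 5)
Your proof is correct and follows essentially the same route as the paper: the semigroup acts diagonally in the Riesz basis of Lemma~\ref{lem:A0 is Riesz spectral}, the hyperbolic multipliers $e^{\lambda_{2,m}t}$ are unimodular, and the parabolic multipliers $e^{\lambda_{1,n}t}$ are uniformly bounded in $n$ because the parabolic spectrum is real and bounded above, so the time-independent weights in \eqref{def_norm_V} are preserved. The only difference is the constant: the paper bounds the parabolic multipliers via $\mathrm{Re}(\lambda_{1,n})\leq c$ to get $\|T_0(t)\|_{\mathcal{L}(V)}\leq e^{ct}$, whereas your $\max\big(e^{\lambda_{1,1}t},1\big)$ is slightly sharper and, incidentally, also covers the hyperbolic modes correctly in the case $c<0$ (where $e^{ct}<1$ would not dominate the unimodular multipliers).
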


\begin{proof}
Let $\mathcal{X}\in V$ and expand $\mathcal{X}=\sum_{n\geq1}a_n\phi_{1,n}+\sum_{m\in\mathbb{Z}}b_m\phi_{2,m}$.
Then
$$
T_0(t)\mathcal{X}=\sum_{n\geq1}e^{\lambda_{1,n}t}a_n\phi_{1,n}+\sum_{m\in\mathbb{Z}}e^{\lambda_{2,m}t}b_m\phi_{2,m}.
$$
Since $\mathrm{Re}(\lambda_{1,n})\leq c$ and $|e^{\lambda_{2,m}t}|=1$, we infer from \eqref{def_norm_V} that $\|T_0(t)\mathcal{X}\|_V\leq e^{ct}\|\mathcal{X}\|_V$.
\end{proof}

\subsection{Unboundedness of $F_t$ from $V'$ to $V$}

For $n\geq1$, set
\begin{equation*}
\beta_n=\mathcal{B}^*\psi_{1,n}=\psi_{1,n}^3(L).
\end{equation*}
Lemma~\ref{lem_necessary_psi} provides the asymptotic behavior of $\beta_n$ as $n\to\infty$ in terms of $\gamma_n$.

\begin{lemma}\label{lem:F_unbounded}
For any fixed $t\in(0,T)$, the operator $F_t$ does \emph{not} map $V'$ continuously into $V$.
More precisely, $F_t$ is unbounded as a linear map from $V'$ to $V$.
\end{lemma}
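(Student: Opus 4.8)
The plan is to disprove continuity by testing $F_t$ directly on the parabolic dual eigenvectors $\psi_{1,N}$, which are precisely the vectors carrying the heavy exponential weights in $V'$ and $V$. First I would compute the adjoint control map on such a vector: since $\psi_{1,N}$ is an eigenvector of $\mathcal{A}^*$ with \emph{real} eigenvalue $\lambda_{1,N}$ and $\mathcal{B}^*\psi_{1,N}=\psi_{1,N}^3(L)=\beta_N$ (see \eqref{eq:B0star}), the formula $(L_T^*\phi)(s)=\mathcal{B}^*T_0(T-s)^*\phi$ gives $(L_T^*\psi_{1,N})(s)=\beta_N e^{\lambda_{1,N}(T-s)}$. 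Pairing $F_t\psi_{1,N}=\int_0^t T_0(t-s)\mathcal{B}(L_T^*\psi_{1,N})(s)\,\mathrm{d}s$ with $\psi_{1,N}$ and using $T_0(t-s)^*\psi_{1,N}=e^{\lambda_{1,N}(t-s)}\psi_{1,N}$ together with biorthogonality, I obtain the diagonal $\phi$-coefficient
$$ c_N:=\langle F_t\psi_{1,N},\psi_{1,N}\rangle=\beta_N^2\,e^{\lambda_{1,N}(t+T)}\int_0^t e^{-2\lambda_{1,N}s}\,\mathrm{d}s , $$
which reduces the whole question to a single scalar integral.

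The decisive point is the resonant integral. Since $\lambda_{1,N}=c-N^2\pi^2/L^2\to-\infty$, the integrand $e^{-2\lambda_{1,N}s}$ is dominated by its endpoint $s=t$, whence $\int_0^t e^{-2\lambda_{1,N}s}\,\mathrm{d}s\sim e^{2|\lambda_{1,N}|t}/(2|\lambda_{1,N}|)$; combined with $e^{\lambda_{1,N}(t+T)}=e^{-|\lambda_{1,N}|(t+T)}$ this yields the clean asymptotics $c_N\sim \beta_N^2\,e^{-(T-t)|\lambda_{1,N}|}/(2|\lambda_{1,N}|)$. I would then substitute the asymptotics $\beta_N\sim-\tfrac{(2L)^{3/2}e^{cL}}{\pi^2}\tfrac{\gamma_N}{N^2}e^{-N^2\pi^2/L}$ of Lemma~\ref{lem_necessary_psi} and $|\lambda_{1,N}|\sim N^2\pi^2/L^2$.

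Finally I would estimate the ratio of the $V$- and $V'$-norms. Bounding $\|F_t\psi_{1,N}\|_V^2$ from below by its single $n=N$ term $w_N|c_N|^2$, with $w_N=\tfrac{N^4}{\gamma_N^2}e^{\nu N^2}$ from \eqref{def_norm_V}, and using $\|\psi_{1,N}\|_{V'}^2=\tfrac{\gamma_N^2}{N^4}e^{-\nu N^2}$ from \eqref{def_norm_V'}, the factors of $\gamma_N$ cancel exactly; inserting $\nu$ from \eqref{def_nu}, all the purely exponential contributions collapse into
$$ \frac{\|F_t\psi_{1,N}\|_V^2}{\|\psi_{1,N}\|_{V'}^2}\ \geq\ \frac{w_N|c_N|^2}{\|\psi_{1,N}\|_{V'}^2}\ \sim\ \mathrm{Cst}\,N^{-4}\exp\!\Big(\tfrac{2\pi^2(T+t)}{L^2}\,N^2\Big) . $$
Because $t\in(0,T)$ forces $T+t>0$, the right-hand side tends to $+\infty$, so no constant can bound $\|F_t\cdot\|_V$ by $\|\cdot\|_{V'}$; equivalently, $F_t\psi_{1,N}$ may even fail to lie in $V$. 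This is exactly the asserted unboundedness.

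The routine part is the bookkeeping of the competing exponentials; the one genuinely delicate step, and where I expect the main difficulty, is the resonant integral: one must check that the $n=N$ contribution is governed by the endpoint $s=t$ uniformly enough that the surviving gain $e^{-(T-t)|\lambda_{1,N}|}/(2|\lambda_{1,N}|)$ is not washed out by the exact cancellation of the $\gamma_N$ weights between $V$ and $V'$. Conceptually this is the mechanism behind Proposition~\ref{prop:noninv}: forward parabolic decay forces $L_T^*$ to amplify the high heat modes, and stopping the controlled dynamics at an intermediate time $t<T$ leaves a net exponential mismatch of order $e^{\mathrm{Cst}\,(T+t)N^2}$ that the sharp endpoint weight of $V$ cannot absorb.
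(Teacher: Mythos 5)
Your proposal is correct and takes essentially the same approach as the paper's proof in Appendix~\ref{sec_app_D}: you test $F_t$ on the dual parabolic eigenvectors $\psi_{1,N}$, compute the diagonal coefficient $\langle F_t\psi_{1,N},\psi_{1,N}\rangle$ exactly (the paper via a scalar ODE, you via direct pairing in the Duhamel formula --- the same computation), insert the asymptotics of Lemma~\ref{lem_necessary_psi}, and exploit the exact cancellation of the $\gamma_N$-weights between $\|\cdot\|_V$ and $\|\cdot\|_{V'}$ to obtain the diverging factor $N^{-4}e^{2\pi^2(T+t)N^2/L^2}$. All key quantities and the final exponent coincide with the paper's; the only difference is presentational (asymptotic equivalences versus explicit inequalities such as $1-e^{2\lambda_{1,n}t}\geq\tfrac{1}{2}$), and the step you flag as delicate is in fact an exact elementary integral.
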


\begin{proof}
Fix $t\in(0,T)$.
For each $n\geq1$, consider the control
\begin{equation}\label{eq:u_n_def}
u_n(s)=(L_T^*\psi_{1,n})(s)=\mathcal{B}^*T_0^*(T-s)\psi_{1,n}=\beta_n e^{\lambda_{1,n}(T-s)},\qquad s\in(0,T),
\end{equation}
and let $\mathcal{X}^{(n)}$ be the mild solution of \eqref{eq:abstract_WHT} with $\mathcal{X}^{(n)}(0)=0$ and input $u_n$.
By definition of $L_t$ and $L_T^*$, we have $\mathcal{X}^{(n)}(t)=F_t\psi_{1,n}$.

Let $x_{1,n}^{(n)}(t)=\langle \mathcal{X}^{(n)}(t),\psi_{1,n}\rangle$.
Taking the $\mathcal{H}$-inner product of \eqref{eq:abstract_WHT} with $\psi_{1,n}$ and using $\mathcal{A}^*\psi_{1,n}=\lambda_{1,n}\psi_{1,n}$ yields the scalar ODE
$$
\frac{\mathrm{d}}{\mathrm{d}t}x_{1,n}^{(n)}(t)=\lambda_{1,n}x_{1,n}^{(n)}(t)+\beta_n u_n(t),\qquad x_{1,n}^{(n)}(0)=0.
$$
Solving and using \eqref{eq:u_n_def} gives, for $\lambda_{1,n}\neq0$,
\begin{equation}\label{eq:x_n_formula}
x_{1,n}^{(n)}(t)=\beta_n^2\int_0^t e^{\lambda_{1,n}(t-s)}e^{\lambda_{1,n}(T-s)}\,\mathrm{d}s
=\frac{\beta_n^2}{2\lambda_{1,n}}\Big(e^{\lambda_{1,n}(T+t)}-e^{\lambda_{1,n}(T-t)}\Big).
\end{equation}
(For the finitely many indices such that $\lambda_{1,n}=0$, a similar computation yields an affine expression in $t$ and does not affect the argument.)

By \eqref{def_norm_V} and \eqref{def_norm_V'}, we have $\|\phi_{1,n}\|_V^2 = \frac{n^4}{\gamma_n^2}e^{\nu n^2}$ and $\|\psi_{1,n}\|_{V'}^2 = \frac{\gamma_n^2}{n^4}e^{-\nu n^2}$.
Since $x_{1,n}^{(n)}(t)$ is the coefficient of $\phi_{1,n}$ in the expansion of $\mathcal{X}^{(n)}(t)$, we have the lower bound
$
\|\mathcal{X}^{(n)}(t)\|_V^2\geq \frac{n^4}{\gamma_n^2}e^{\nu n^2} |x_{1,n}^{(n)}(t)|^2.
$
Therefore,
\begin{equation}\label{eq:ratio_def}
\frac{\|F_t\psi_{1,n}\|_V^2}{\|\psi_{1,n}\|_{V'}^2}
=\frac{\|\mathcal{X}^{(n)}(t)\|_V^2}{\|\psi_{1,n}\|_{V'}^2}
\geq \frac{n^8}{\gamma_n^4}e^{2\nu n^2}\,|x_{1,n}^{(n)}(t)|^2.
\end{equation}

We now estimate the right-hand side of \eqref{eq:ratio_def}.
For $n$ large enough, $\lambda_{1,n}<0$ and
$
|e^{\lambda_{1,n}(T+t)}-e^{\lambda_{1,n}(T-t)}|
=e^{\lambda_{1,n}(T-t)}\big(1-e^{2\lambda_{1,n}t}\big).
$
Since $\lambda_{1,n}\to-\infty$, we can fix $n_0$ such that $1-e^{2\lambda_{1,n}t}\geq \frac{1}{2}$ for all $n\geq n_0$.
Hence, from \eqref{eq:x_n_formula},
\begin{equation}\label{eq:x_lower}
|x_{1,n}^{(n)}(t)|^2\geq \frac{|\beta_n|^4}{16\,\lambda_{1,n}^2}\,e^{2\lambda_{1,n}(T-t)}\qquad \forall n\geq n_0.
\end{equation}

Moreover, the asymptotics \eqref{eq:psi_1n_asympt} implies that there exist constants $c_0>0$ and $n_1\geq n_0$ such that
\begin{equation}\label{eq:beta_lower}
|\beta_n|^2\geq c_0\,\frac{\gamma_n^2}{n^4}\,e^{-2\pi^2 n^2/L} \qquad \forall n\geq n_1.
\end{equation}
Combining \eqref{eq:ratio_def}, \eqref{eq:x_lower}, and \eqref{eq:beta_lower} yields 
$$
\frac{\|F_t\psi_{1,n}\|_V^2}{\|\psi_{1,n}\|_{V'}^2}
\geq\frac{c_0^2}{16}\,\frac{1}{\lambda_{1,n}^2}\exp\!\Big(\big(2\nu-\frac{4\pi^2}{L}\big)n^2+2\lambda_{1,n}(T-t)\Big) \qquad \forall n\geq n_1.
$$
Using $\nu=\frac{2\pi^2}{L}(1+\frac{T}{L})$ and $\lambda_{1,n}=c-\frac{n^2\pi^2}{L^2}$, we obtain
$$
\big(2\nu-\frac{4\pi^2}{L}\big)n^2+2\lambda_{1,n}(T-t)
= \frac{2\pi^2 (T+t)}{L^2}n^2+2c(T-t).
$$
Therefore,
$$
\frac{\|F_t\psi_{1,n}\|_V^2}{\|\psi_{1,n}\|_{V'}^2} \geq C\,\frac{1}{\lambda_{1,n}^2}\,e^{\frac{2\pi^2 (T+t)}{L^2}n^2}, \qquad \forall n\geq n_1,
$$
for some $C>0$ independent of $n$.
Since $\lambda_{1,n}\sim -\frac{\pi^2}{L^2}n^2$, the right-hand side diverges to $+\infty$ as $n\to\infty$.
This shows that $F_t$ is unbounded from $V'$ to $V$.
\end{proof}

\subsection{End of the proof of Proposition~\ref{prop:noninv}}
Fix $t\in(0,T)$.
Assume, by contradiction, that $F_t(V')\subset V$.
Since $F_t:V'\to \mathcal{H}$ is bounded, its graph as an operator $V'\to V$ is closed: if $\xi_k\to\xi$ in $V'$ and $F_t\xi_k\to \eta$ in $V$, then $F_t\xi_k\to F_t\xi$ in $\mathcal{H}$, hence $\eta=F_t\xi$ in $\mathcal{H}$ and therefore $\eta=F_t\xi$ in $V$ (injective embedding $V\hookrightarrow\mathcal{H}$).
By the closed graph theorem, $F_t$ would then be bounded from $V'$ to $V$, contradicting Lemma~\ref{lem:F_unbounded}.
Thus, there exists $\xi\in V'$ such that $F_t\xi\notin V$.

Let $\mathcal{X}_1=G_T\xi\in V$ and consider the HUM control $u=L_T^*\xi$, which drives $\mathcal{X}(0)=0$ to $\mathcal{X}(T)=\mathcal{X}_1$.
The corresponding trajectory satisfies
$$
\mathcal{X}(t)=L_t u = L_t L_T^*\xi = F_t\xi\notin V,
$$
which proves the non-invariance statement.

\section{Auxiliary estimates on the coefficients $\Gamma_m$}\label{app_Gamma_m}
In this appendix, we prove \eqref{two_sided_Gamma_m}.
We use the notations of Section~\ref{sec:heat_wave}.
Recall that $\lambda_{2,m}= i\,\tfrac{(2m+1)\pi}{2L}$, $r_m=\sqrt{\lambda_{2,m}-c}$ with $\mathrm{Re}(r_m)\geq  0$, and $\Gamma_m$ defined by \eqref{def_Gamma_m}, for any $m\in\mathbb{Z}$.
As $|m|\to\infty$, we have $r_m \sim e^{i\,\mathrm{sgn}(m)\pi/4}\sqrt{\tfrac{|m|\pi}{2L}}$ and $\mathrm{Re}(r_m)\sim \sqrt{\tfrac{|m|\pi}{2L}}$.
The notation $f\asymp g$ means that there exist $C_1,C_2>0$ such that $C_1\,g\leq f\leq C_2\,g$ for any $m\in\mathbb{Z}$.

\begin{proof}[Proof of \eqref{two_sided_Gamma_m}]
First, we note that
$\Gamma_m = \frac{i}{2}\int_0^L \beta(s)\,\sin(\tfrac{(2m+1)\pi}{2L} s)\,e^{r_m s}\,ds + \mathrm{O}(1)$ as $|m|\to\infty$. Setting $t=L-s$ gives
$$
e^{-r_m L}\Gamma_m
= \frac{i}{2}(-1)^m\int_0^L \beta(L-t)\,\cos\big(\tfrac{(2m+1)\pi}{2L} t\big)\,e^{-r_m t}\,dt + \mathrm{O}(e^{-\mathrm{Re}(r_m)L}).
$$
Since $\mathrm{Re}(r_m)\to+\infty$, the integral is localized at $t=0$.
Using the Taylor expansion 
$\beta(L-t)=\frac{(-1)^k}{k!}\beta^{(k)}(L)\,t^k + \mathrm{O}(t^{k+1})$ and $\int_0^{+\infty} t^k e^{-\alpha t}\,dt = \frac{k!}{\alpha^{k+1}}$, we obtain
$$
e^{-r_m L}\Gamma_m
= \frac{i}{2}(-1)^{m+k}\beta^{(k)}(L)\,\mathrm{Re}\big((-i\tfrac{(2m+1)\pi}{2L})^{-k-1}\big)
+ \mathrm{O}\big(|r_m-i\tfrac{(2m+1)\pi}{2L}|^{-k-2}\big).
$$
Now, we have $|r_m-i\tfrac{(2m+1)\pi}{2L}|\asymp |m|$ and $|r_m|\asymp |m|$ as $|m|\to\infty$.
Moreover, since
$$
(r_m-i\tfrac{(2m+1)\pi}{2L})^{-k-1}
=(-i\tfrac{(2m+1)\pi}{2L})^{-k-1}\big(1+\mathrm{O}(|m|^{-1/2})\big) ,
$$
the leading order of its real part depends on the parity of $k+1$:
if $k$ is odd, then $\mathrm{Re}\big((-i\frac{(2m+1)\pi}{2L})^{-k-1}\big)\asymp |m|^{-k-1}$, while
if $k$ is even, then $\mathrm{Re}\big((-i\frac{(2m+1)\pi}{2L})^{-k-1}\big)=0$, so the first nonzero term
comes from the $\mathrm{O}(|m|^{-1/2})$ correction, yielding an extra factor $|r_m|/|m|$ and we obtain
$\mathrm{Re}\big((r_m-i\frac{(2m+1)\pi}{2L})^{-k-1}\big) \asymp |m|^{-k-3/2}$. The result follows.
\end{proof}


\bibliographystyle{IEEEtranS}
\nocite{*}
\bibliography{IEEEabrv,mybibfile}

\end{document}